\newtheorem{theorem}{Theorem}
\newtheorem{lemma}[theorem]{Lemma}
\theoremstyle{definition}
\newtheorem{definition}[theorem]{Definition}
\newtheorem{remark}[theorem]{Remark}
\title{Continuing the classification of homogeneous Kobayashi-hyperbolic manifolds with high-dimensional automorphism group}
\author{Elliot Herrington}
\address{School of Mathematical Sciences, University of Adelaide, Adelaide SA 5005, Australia}
\email{elliot.herrington@adelaide.edu.au}
\subjclass[2010]{53C30, 53C35, 32Q45, 32M05, 32M10.}
\keywords{Kobayashi-hyperbolic manifolds, homogeneous complex manifolds, group of holomorphic automorphisms.}
\begin{document} 

\begin{abstract}  
\vspace{1.5cm} 
We classify all homogeneous Kobayashi-hyperbolic manifolds of dimension $n \ge 2$ whose group of holomorphic automorphisms has dimension either $n^2 - 7$ or $n^2 - 8.$ This paper continues the work of A. Isaev, who classified all such manifolds with automorphism group dimension $n^2 - 6$ and greater. 
\end{abstract}

\maketitle

\section{Introduction} 

A Kobayashi-hyperbolic manifold is a connected complex manifold on which the Kobayashi pseudodistance is a true distance. Such manifolds have been extensively studied since their introduction in the late 1960s and have many interesting properties. The class of Kobayashi-hyperbolic manifolds is quite large, and includes all bounded domains and many unbounded domains in $\mathbb C^n$. In the literature, Kobayashi-hyperbolic manifolds are usually referred to as simply \emph{hyperbolic}, and we follow this convention from here on. 

Let $M$ denote an $n$-dimensional hyperbolic manifold, and set $d(M):= \dim \text{Aut} (M).$ It is well-known that the maximal dimension of $d(M)$ is given by $d(M) = n^2 + 2n$, which holds if and only if $M$ is biholomorphic to the unit ball in $\mathbb C^n.$ In a series of papers from 2005 to 2008, Isaev classified all hyperbolic manifolds with automorphism group dimension $n^2 - 1 \leq d(M) \leq n^2 + 2n$ (see the monograph \cite{Isa4} for a consolidation of these results). Whilst it would be desirable to extend the classification beyond the critical value of $d(M) = n^2 -1,$ providing a full explicit description of all hyperbolic manifolds of automorphism group dimension $d(M) = n^2 -2$ is impossible. For instance, a generic Reinhardt domain in $\mathbb C^2$ has a $2$-dimensional automorphism group. Such domains have uncountably many isomorphism classes, so cannot be explicitly described. 

Further progress is possible if we restrict the class of manifolds under consideration, taking $M$ to be \emph{homogeneous}. That is, $M$ is a complex hyperbolic manifold on which Aut($M$) acts transitively. Such manifolds are of general interest in complex geometry, and Isaev was able to continue the classification by restricting attention to this class. In particular, all homogeneous hyperbolic manifolds with automorphism group dimension $n^2-6 \leq d(M) \leq n^2-2$ were classified (see papers \cite{Isa2}, \cite{Isa1}, \cite{Isa3}). In this article, we continue the classification by 'working downwards,' and provide a description of all homogeneous hyperbolic manifolds satisfying $d(M) = n^2 - 7$ or $d(M) = n^2 - 8.$ We see that, up to biholomorphism, there are two such manifolds with automorphism group dimension $d(M) = n^2 - 7,$ and four such manifolds with automorphism group dimension $d(M)= n^2 - 8.$ In what follows, $B^n$ will denote the $n$-dimensional open unit ball. 

\begin{theorem} \label{main7}
Let $M$ be a homogeneous $n$-dimensional Kobayashi-hyperbolic manifold with $d(M) = n^2-7.$ Then one of the following holds: 
\begin{enumerate}
\item $n=5$ and $M$ is biholomorphic to $B^2 \times T_3$, where $T_3$ is the tube domain 
\begin{align}
T_3 = \big\{(z_1, z_2, z_3) \in \mathbb C^3 : (\operatorname{Im }z_1)^2 -(\operatorname{Im }z_2)^2 - (\operatorname{Im }z_3)^2 > 0, \operatorname{Im }z_1 >0 \big\}. \nonumber 
\end{align} 
\item $n = 5$ and $M$ is biholomorphic to $B^1 \times T_4$, where $T_4$ is the tube domain
\begin{align}
T_4 = \big\{(z_1, z_2, z_3, z_4) \in \mathbb C^4 : (\operatorname{Im }z_1)^2 -(\operatorname{Im }&z_2)^2 - (\operatorname{Im }z_3)^2 - (\operatorname{Im }z_4)^2>0, \nonumber \\ 
&\operatorname{Im }z_1 >0 \big\}. \nonumber 
\end{align} 
\end{enumerate} 
\end{theorem}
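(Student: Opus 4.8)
The plan is to pass from $M$ to a product of irreducible homogeneous bounded domains and then solve an elementary Diophantine problem governed by how far each factor is from being a ball. First I would invoke the structure theory recalled in \cite{Isa4}: a homogeneous hyperbolic manifold is biholomorphic to a homogeneous bounded domain $D\subset\mathbb{C}^n$, and every such $D$ decomposes (uniquely up to order) as $D\cong D_1\times\cdots\times D_k$ with each $D_i$ an irreducible homogeneous bounded domain of dimension $n_i$. Since the identity component of the automorphism group of a product of bounded domains is the product of the identity components of the factors' automorphism groups, one has $d(M)=\sum_{i=1}^k d(D_i)$ and $n=\sum_{i=1}^k n_i$.

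\emph{Deficiency and the master equation.} For each factor set $c(D_i):=n_i^2+2n_i-d(D_i)$. By the ball characterization recalled in the Introduction ($d(N)\le m^2+2m$ for an $m$-dimensional hyperbolic $N$, with equality only for $B^m$) each $c(D_i)\ge 0$, and $c(D_i)=0$ exactly when $D_i\cong B^{n_i}$. From the classification of homogeneous bounded domains together with the automorphism-dimension computations underlying \cite{Isa4} I would record the key input: every non-ball irreducible factor has $n_i\ge 3$ and $c(D_i)\ge 5$, and the only non-ball factors with $c(D_i)\le 9$ are $T_3$ (with $n_i=3$, $c=5$) and $T_4$ (with $n_i=4$, $c=9$). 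Substituting $d(D_i)=n_i^2+2n_i-c(D_i)$ into $d(M)=n^2-7$ and using $\sum_i n_i^2=n^2-2P$ with $P:=\sum_{i<j}n_in_j$ yields the master equation $\sum_{i=1}^k c(D_i)=2(n-P)+7$.

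\emph{Combinatorial endgame.} A one-line estimate gives $P\ge n-1$ whenever $k\ge2$, since the cross-terms through any single factor already sum to $n_j(n-n_j)\ge n-1$. Hence if two or more factors were non-balls the left side of the master equation would be $\ge 10$, forcing $P\le n-2$, a contradiction; so at most one factor is a non-ball. If all factors were balls the master equation would read $2(n-P)+7=0$, impossible by parity. Therefore there is exactly one non-ball factor $D_1$, and $c(D_1)=2(n-P)+7$. Writing $s=n-n_1$ for the combined dimension of the ball factors and $P'\ge0$ for the cross-terms among them, so that $P=n_1 s+P'$, the bound $c(D_1)\ge5$ gives $P\le n+1$, which rearranges to $(n_1-1)(s-1)+P'\le2$. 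Since $n_1\ge3$ this forces $s\le2$: the value $s=1$ gives $c(D_1)=9$, hence $D_1=T_4$ and $M\cong B^1\times T_4$; the value $s=2$ forces $n_1=3$ and $P'=0$, hence $c(D_1)=5$, $D_1=T_3$ and $M\cong B^2\times T_3$; and the remaining possibility of a single non-ball factor ($s=0$) is eliminated by checking directly that no irreducible domain satisfies $c(D_1)=2n_1+7$. Both surviving cases give $n=5$.

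I expect the main obstacle to lie not in this combinatorics but in securing the classification input: knowing the complete list of irreducible homogeneous bounded domains in each low dimension together with their exact automorphism-group dimensions, and in particular ruling out non-symmetric homogeneous (Siegel) domains, whose automorphism groups are strictly smaller and whose deficiencies therefore exceed those of the symmetric domains of equal dimension. Establishing that $c(D)\ge5$ for every non-ball factor, with the small values realized only by $T_3$ and $T_4$, is precisely where the structure theory of Siegel domains and Isaev's earlier dimension tables do the real work.
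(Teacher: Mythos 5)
Your combinatorial reduction handles the \emph{reducible} case essentially correctly, and by a genuinely different route from the paper: the decomposition into irreducible homogeneous factors, the additivity $d(M)=\sum_i d(D_i)$ (Cartan's theorem on automorphisms of products of bounded domains, plus Kaneyuki's decomposition theorem, both of which you should cite), and the master equation $\sum_i c(D_i)=2(n-P)+7$ are all sound. Moreover, your key numerical input is legitimately available: an irreducible non-ball factor with $c(D_i)\le 9$ satisfies $d(D_i)\ge n_i^2+2n_i-9\ge n_i^2-3>n_i^2-6$ (using $n_i\ge 3$), hence appears in Isaev's classification of homogeneous hyperbolic manifolds with $d\ge n^2-6$, whose only irreducible non-ball entries are $T_3$ ($c=5$), $T_4$ ($c=9$), $T_5$ ($c=14$) and the Pyatetskii--Shapiro domain $\mathcal D$ ($c=14$). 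Granting that, your endgame correctly forces $s\le 2$ and produces exactly $B^1\times T_4$ and $B^2\times T_3$.

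The genuine gap is the case $s=0$, where $M$ itself is irreducible. There your master equation degenerates into a restatement of the hypothesis ($c(M)=2n+7$ \emph{is} $d(M)=n^2-7$), and the proposed elimination ``by checking directly that no irreducible domain satisfies $c(D_1)=2n_1+7$'' is not a check that can be performed from the sources you invoke: the value $d=n^2-7$ lies strictly below the threshold $d\ge n^2-6$ up to which homogeneous hyperbolic manifolds had previously been classified, so no existing dimension table covers it, and irreducible non-symmetric homogeneous domains exist in every dimension $n\ge 4$ in ever larger families, so the check is not finite. Ruling these out is precisely the content of the paper's argument: Nakajima's theorem realizes $M$ as an affinely homogeneous Siegel domain $S(\Omega,H)$, and the bulk of the proof --- the quadratic estimate (\ref{estim4}) cutting the possibilities down to eight pairs $(k,n)$, the bounds on $\dim{\mathfrak g}(\Omega)$ and on $s$, and the explicit computations of ${\mathfrak g}_{1/2}$ and ${\mathfrak g}_1$ for the domains $D_4$, $D_6$, $D_8$, $D_{10}$, $D_{12}$, $D_{13}$ --- is exactly the verification that no such domain, irreducible ones included, attains $d=n^2-7$ other than the two products in the statement. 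As written, your proof therefore assumes the hardest sub-case of the theorem. A workable hybrid would be to keep your deficiency combinatorics for the reducible case and run the Siegel-domain/graded-Lie-algebra analysis only for irreducible $M$, but that analysis cannot be bypassed by an appeal to existing tables.
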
 

\begin{theorem} \label{main8}
Let $M$ be a homogeneous $n$-dimensional Kobayashi-hyperbolic manifold with $d(M) = n^2-8.$ Then one of the following holds: 
\begin{enumerate}
\item $n=5$ and $M$ is biholomorphic to $B^1 \times B^1 \times B^1 \times B^2$.
\item $n=6$ and $M$ is biholomorphic to the tube domain 
\begin{align}
T_6 = \big\{(z_1, z_2, z_3, z_4, z_5, z_6&) \in \mathbb C^6 : (\operatorname{Im } z_1)^2 -(\operatorname{Im } z_2)^2 - (\operatorname{Im } z_3)^2 \nonumber \\
&- (\operatorname{Im } z_4)^2 - (\operatorname{Im } z_5)^2 - (\operatorname{Im } z_6)^2 >0, \operatorname{Im }z_1 >0 \big\}. \nonumber 
\end{align} 
\item $n=7$ and $M$ is biholomorphic to $B^1 \times B^1 \times B^5$. 
\item $n=8$ and $M$ is biholomorphic to $B^2 \times B^6$. 
\end{enumerate} 
\end{theorem}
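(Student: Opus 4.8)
\section*{Proof proposal for Theorem~\ref{main8}}

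The plan is to translate the hypothesis $d(M)=n^2-8$ into a codimension bound on the linear isotropy group, reduce $M$ to a product of irreducible pieces, and finish with an elementary count. First I would fix $p\in M$ and consider the isotropy subgroup $K\subset\operatorname{Aut}(M)$ of $p$. As $M$ is hyperbolic, $K$ is compact and acts faithfully on the holomorphic tangent space $T_pM\cong\mathbb{C}^n$, so $K\hookrightarrow U(n)$; moreover $M\cong\operatorname{Aut}(M)/K$ gives $d(M)=\dim K+2n$. Thus $d(M)=n^2-8$ is equivalent to $\dim K=n^2-2n-8$, that is, to the \emph{defect} $\delta(M):=n^2-\dim K=(n^2+2n)-d(M)$ taking the value $\delta(M)=2n+8$. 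The ball $B^n$ is exactly the case $\delta=0$.

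Next I would invoke the structure theory of homogeneous hyperbolic manifolds --- Nakajima's realisation of $M$ as a homogeneous Siegel domain of the second kind, together with the splitting of such a domain into irreducible factors --- to write $M\cong M_1\times\cdots\times M_r$ with each $M_i$ an irreducible homogeneous hyperbolic manifold of dimension $n_i$, $n=\sum_i n_i$. Since the factors are hyperbolic and the only continuous part of $\operatorname{Aut}(M)$ is the product of the $\operatorname{Aut}(M_i)$ (permutations of isomorphic factors being discrete), we get $K=K_1\times\cdots\times K_r\subset\prod_i U(n_i)\subset U(n)$ with $K_i$ the isotropy of $M_i$. Computing codimensions along this chain, and using $n^2=\sum_i n_i^2+2\sum_{i<j}n_in_j$ together with $\delta(M_i)=n_i^2-\dim K_i$, yields the master identity
\begin{equation*}
2\sum_{i<j} n_i n_j+\sum_{i=1}^r \delta(M_i)=2n+8.\tag{$\star$}
\end{equation*}
Both terms on the left are non-negative, so $(\star)$ simultaneously bounds the mutual products $n_in_j$ and the factor defects.

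The heart of the matter is then to determine which irreducible factors are admissible, i.e.\ which irreducible homogeneous hyperbolic manifolds have defect at most $2n+8$. For factors with $\delta(M_i)\le 2n_i+6$ I would simply quote Isaev's existing classification ($d\ge n^2-6$), which leaves only the balls $B^{n_i}$ ($\delta=0$) and the type IV tube domains $T_3,T_4,T_5$ of complex dimensions $3,4,5$ and defects $5,9,14$. The genuinely new input is the borderline: I must show that the irreducible tube $T_6$, whose defect $\delta(T_6)=20=2\cdot6+8$ lies just outside Isaev's range, does occur, while every other irreducible homogeneous hyperbolic manifold of defect $2n+8$ --- in particular all non-symmetric homogeneous Siegel domains and the symmetric domains of types $\mathrm{I}_{p,q}$ $(p,q\ge2)$, $\mathrm{II}$, $\mathrm{III}$ and the exceptional ones --- is excluded. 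This is where I expect the real work: it cannot be cited and instead requires the affine-automorphism dimension count for homogeneous Siegel domains (the dichotomy $\operatorname{Aut}=\operatorname{Aff}$ unless $M_i$ is symmetric, plus an explicit estimate of $\dim\operatorname{Aff}$) to force the defect of every such domain strictly above $2n+8$.

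Granting that the admissible factors are balls and the tubes $T_3,\dots,T_6$, the remainder is an elementary solution of $(\star)$. A parity observation is decisive: $2\sum_{i<j}n_in_j$ is even and the right-hand side is even, so $\sum_i\delta(M_i)$ must be even; since $T_3,T_4$ have odd defect while balls, $T_5,T_6$ have even defect, an odd-defect tube can appear only an even number of times, but two factors of dimension $\ge 3$ already contribute at least $9$ to $\sum_{i<j}n_in_j$, overshooting the budget permitted by $(\star)$, so no odd-defect tube appears at all. What remains are the all-ball solutions of $\sum_{i<j}n_in_j=n+4$, namely $(n_i)=(6,2),(5,1,1),(2,1,1,1)$, together with the single irreducible solution $M=T_6$. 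These give precisely $B^2\times B^6$, $B^1\times B^1\times B^5$, $B^1\times B^1\times B^1\times B^2$ and $T_6$, the four cases of the theorem. I would finish by confirming that each realises $d(M)=n^2-8$ and that the four are pairwise non-biholomorphic.
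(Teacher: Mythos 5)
Your reduction to irreducible factors is a genuinely different strategy from the paper's, and the bookkeeping in it is sound: the identity $(\star)$ is correct, and it does show that in any genuine product every factor $M_i$ of dimension $n_i\ge 2$ has defect at most $2n_i+6$, hence lies in Isaev's classified range $d\ge n_i^2-6$, while a one-dimensional factor must be $B^1$; your enumeration of the all-ball solutions $(6,2)$, $(5,1,1)$, $(2,1,1,1)$ and the exclusion of $T_3,T_4,T_5$ from products are also correct (modulo one slip: the irreducible manifolds in Isaev's range are the balls, $T_3$, $T_4$, $T_5$ \emph{and} the non-symmetric Pyatetskii-Shapiro domain $\mathcal D$ of item (xxiv) of the Classification, with $n=4$ and defect $14$; it is excluded by $(\star)$ exactly as $T_5$ is, but your list omits it). The trouble is that this reduction concentrates, rather than removes, the difficulty: the entire new content of Theorem \ref{main8} now sits in the case $r=1$, namely the claim that an \emph{irreducible} homogeneous hyperbolic manifold with $d(M)=n^2-8$ must be $T_6$. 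That claim is exactly what the paper proves in Section 4 --- the Nakajima realization, the graded algebra of Theorem \ref{kmoalgebradescr}, the dimension estimates, the case analysis over the cones $\Omega_1,\dots,\Omega_{12}$, and the technical Lemma \ref{finallemma} showing $\mathfrak g_{1/2}=0$ --- and your proposal replaces all of it with a single sentence announcing where ``the real work'' would go.

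Moreover, the route you sketch for that work would fail, because the dichotomy it rests on --- $\operatorname{Aut}=\operatorname{Aff}$ unless the domain is symmetric --- is false. The domain $\mathcal D$ is non-symmetric, yet in the paper's Case 2 of Section 3 ($k=3$, $n=4$, $v\in\partial\Omega_3\setminus\{0\}$) one has $\mathfrak g_{1/2}=0$ but $\dim\mathfrak g_1=1$, so $d(\mathcal D)=10$ strictly exceeds the dimension $9$ of its affine automorphism group: for non-symmetric homogeneous Siegel domains the full automorphism group can be strictly larger than the affine one. This is precisely why the paper cannot argue through affine automorphisms alone and must control $\mathfrak g_{1/2}$ and $\mathfrak g_1$ separately, via Theorems \ref{descrg1/2} and \ref{descrg1} and the various lemmas forcing $\mathfrak g_{1/2}=0$. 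So: your factor-decomposition plus counting is an attractive packaging of the reducible case (it still needs citations for the unique splitting of a homogeneous bounded domain into irreducibles and for $\operatorname{Aut}^\circ$ of a product being the product of the $\operatorname{Aut}^\circ$'s), but the irreducible case must still be proved by something like the paper's Siegel-domain analysis; as written, your proposal assumes the theorem's essential content rather than proving it.
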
 

We now present the classification itself. Combined with the classical fact for dimension $d(M)=n^2+2n$, the results collected in \cite{Isa4}, and the articles \cite{Isa2}, \cite{Isa1} and \cite{Isa3}, the above two theorems yield the following classification for homogeneous hyperbolic manifolds with $d(M) \ge n^2-8$ up to biholomorphism: 
\vspace{3mm} 

\noindent {\bf Classification.} Let $M$ be an homogeneous $n$-dimensional Kobayashi-hyperbolic manifold satisfying $n^2-8\le d(M)\le n^2+2n$. Then $M$ is biholomorphic either to a product of unit balls, a tube domain, a product of a unit ball and a tube domain, or to the domain $\mathcal D$ given below. Specifically, $M$ is one of the following manifolds: 
\begin{itemize}
\item[{\rm (i)}] $B^n$ {\rm (}here $d(M)=n^2+2n${\rm )}.
\item[{\rm (ii)}] $B^1\times B^{n-1}$ {\rm (}here $d(M)=n^2+2${\rm )}.
\item[{\rm (iii)}] $B^1\times B^1\times B^1$ {\rm (}here $n=3$, $d(M)=9=n^2${\rm )}.
\item[{\rm (iv)}] $B^2\times B^2$ {\rm (}here $n=4$, $d(M)=16=n^2${\rm )}.
\item[{\rm (v)}] $B^1\times B^1\times B^2$ {\rm (}here $n=4$, $d(M)=14=n^2-2${\rm )}.
\item[{\rm (vi)}] $B^2\times B^3$ {\rm (}here $n=5$, $d(M)=23=n^2-2${\rm )}.
\item[{\rm (vii)}] $B^1\times B^1\times B^1\times B^1$ {\rm (}here $n=4$, $d(M)=12=n^2-4${\rm )}.
\item[{\rm (viii)}] $B^1\times B^1\times B^3$ {\rm (}here $n=5$, $d(M)=21=n^2-4${\rm )}.
\item[{\rm (ix)}] $B^2\times B^4$ {\rm (}here $n=6$, $d(M)=32=n^2-4${\rm )}.
\item[{\rm (x)}] $B^1\times B^2\times B^2$ {\rm (}here $n=5$, $d(M)=19=n^2-6${\rm )}.
\item[{\rm (xi)}] $B^3\times B^3$ {\rm (}here $n=6$, $d(M)=30=n^2-6${\rm )}.
\item[{\rm (xii)}] $B^1\times B^1\times B^4$ {\rm (}here $n=6$, $d(M)=30=n^2-6${\rm )}.
\item[{\rm (xiii)}] $B^2\times B^5$ {\rm (}here $n=7$, $d(M)=43=n^2-6${\rm )}.
\item[{\rm (xiv)}] $B^1\times B^1\times B^1\times B^2$ {\rm (}here $n=5$, $d(M)=17=n^2-8${\rm )}.
\item[{\rm (xv)}] $B^1\times B^1\times B^5$ {\rm (}here $n=7$, $d(M)=41=n^2-8${\rm )}.
\item[{\rm (xvi)}] $B^2\times B^6$ {\rm (}here $n=8$, $d(M)=56=n^2-8${\rm )}.
\item[{\rm (xvii)}] the tube domain $T_3$ defined in Theorem 1 {\rm (}here $n=3$, $d(M)=10=n^2+1${\rm )}.
\item[{\rm (xviii)}] the tube domain $T_4$ defined in Theorem 1 {\rm (}here $n=4$, $d(M)=15=n^2-1${\rm )}.
\item[{\rm (xix)}] the tube domain $T_5$ given by 
\begin{align}
T_5 = \big\{(z_1, z_2, z_3, z_4, z_5) \in \mathbb C^5 &: (\operatorname{Im } z_1)^2 -(\operatorname{Im } z_2)^2 - (\operatorname{Im } z_3)^2 \nonumber \\
&- (\operatorname{Im } z_4)^2 - (\operatorname{Im } z_5)^2 >0, \operatorname{Im }z_1 >0 \big\} \nonumber 
\end{align} 
{\rm (}here $n=5$, $d(M)=21=n^2-4${\rm )}. 
\item[{\rm (xx)}] the tube domain $T_6$ defined in Theorem 2 {\rm (}here $n=6$, $d(M)=28=n^2-8${\rm )}.
\item[{\rm (xxi)}] $B^1\times T_3$ {\rm (}here $n=4$, $d(M)=13=n^2-3${\rm )}.
\item[{\rm (xxii)}] $B^2\times T_3$ {\rm (}here $n=5$, $d(M)=18=n^2-7${\rm )}.
\item[{\rm (xxiii)}] $B^1\times T_4$ {\rm (}here $n=5$, $d(M)=18=n^2-7${\rm )}.
 \item[{\rm (xxiv)}] the domain $\mathcal D$ given by 
\begin{align}
\mathcal D = \big\{ (z, w) \in \mathbb C^3 \times \mathbb C : (\operatorname{Im } z_1 &- |w|^2)^2 - (\operatorname{Im } z_2 - |w|^2)^2 - (\operatorname{Im } z_3) > 0, \nonumber \\
& \operatorname{Im } z_1 -|w|^2 > 0 \big\} \nonumber 
\end{align} 
{\rm (}here $n=4$, $d(M)=10=n^2-6${\rm )}. 
\end{itemize}

Note that $\mathcal D$, the final domain listed in the above classification, is linearly equivalent to the well-known example of a non-symmetric bounded homogeneous domain in $\mathbb C^4$, discovered by I. Pyatetskii-Shapiro (see \cite[pp.~26-28]{PS}). The proofs of Theorems 1 and 2 rely on an important structure theorem due to Nakajima (see \cite{N}), which states that every homogeneous hyperbolic manifold is biholomorphic to an affinely homogeneous Siegel domain of the second kind (to be defined in the next section). After utilising this theorem, we then proceed to analyse the (graded) Lie algebra of the automorphism group of a Siegel domain of the second kind. An explicit description of this Lie algebra, which is quite involved, was provided in \cite{KMO} and \cite{S}. We begin the following section by defining a Siegel domain of the second kind, before considering its automorphism group and associated Lie algebra. 
\vspace{3mm} 

\noindent {\bf Acknowledgements.} This paper represents part of the PhD thesis of the author under the supervision of Finnur L\'arusson and Thomas Leistner, and the author would like to thank them both for their supervision and guidance. He would also like to thank the late Alexander Isaev, under whose supervision this project began. The author gratefully acknowledges the support of an Australian Government Research Training Program (RTP) scholarship. 

\section{Siegel domains of the second kind and other preliminaries} 

Before providing the definition of a Siegel domain of the second kind, it is necessary to consider the concept of an open convex cone, and its automorphism group. An open subset $\Omega \subset \mathbb R^k$ is called an \emph{open convex cone} if and only if $x, y \in \Omega$ implies $\lambda x + \mu y \in \Omega$ for all $\lambda, \mu >0.$ That is, $\Omega$ is a convex cone if it is closed with respect to taking linear combinations of its elements with positive coefficients. 

\begin{definition} \label{autdef}
The \emph{linear automorphism group} of an open convex cone $\Omega$ is defined by 
$$G(\Omega) = \left\{ A \in \operatorname{GL}_k (\mathbb R) : A \Omega = \Omega \right\}.$$ 
\end{definition} 

We will hereafter refer to $G(\Omega)$ as simply the \emph{automorphism group.} Clearly, $G(\Omega)$ is a closed subgroup of $\operatorname{GL}_k(\mathbb R)$, and hence is a Lie group. We denote by $\mathfrak g (\Omega) \subset \mathfrak{gl} (\mathbb R)$ its Lie algebra. The open convex cone $\Omega$ is said to be $homogeneous$ if $G(\Omega)$ acts on $\Omega$ transitively, that is, if for all $x, y \in \Omega$ there exists $A \in G(\Omega)$ such that $Ax = y.$ 

We will be concerned with the notion of a \emph{proper} open convex cone, that is, an open convex cone that does not contain a line through the origin.\footnote{We follow the usage in \cite{FK}. Unfortunately, the terminology is not standard. For example, in \cite{BV} the term is reserved for convex cones that are closed, have non-empty interior, and do not contain a line through the origin.} For such cones, we have a useful estimate for the dimension of $\mathfrak g(\Omega).$ The following proof is due to Isaev (see \cite[Lemma 2.1]{Isa1}). For completeness, we present it here in full. 

\begin{lemma} \label{gest} 
If $\Omega \subset \mathbb R^k$ is a proper open convex cone, then 
$$\dim G(\Omega) \leq \frac{k^2}{2} - \frac{k}{2} +1.$$ 
\end{lemma}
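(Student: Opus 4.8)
The plan is to bound $\dim G(\Omega)$ by bounding the dimension of its Lie algebra $\mathfrak g(\Omega) \subset \mathfrak{gl}_k(\mathbb R)$. The key structural fact I would exploit is that a proper open convex cone admits a strictly positive linear functional, or equivalently a natural invariant symmetric bilinear form (the "characteristic form" of the cone). The goal is to show that, with respect to a suitable inner product adapted to $\Omega$, every element of $\mathfrak g(\Omega)$ decomposes as a symmetric part that is severely constrained and an antisymmetric part that can range freely, so that the total dimension is at most $\dim \mathfrak{so}(k) + k = \binom{k}{2} + k$ minus a correction, giving exactly $k^2/2 - k/2 + 1$.

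Let me sketch the steps in order. **First**, I would fix a point $x_0 \in \Omega$ and consider the isotropy subgroup $G_{x_0} = \{A \in G(\Omega) : Ax_0 = x_0\}$, with Lie algebra $\mathfrak g_{x_0} = \{X \in \mathfrak g(\Omega) : Xx_0 = 0\}$. Then $\dim \mathfrak g(\Omega) = \dim \mathfrak g_{x_0} + \dim(\mathfrak g(\Omega)\cdot x_0)$, and the orbit dimension $\dim(\mathfrak g(\Omega)\cdot x_0)$ is at most $k$. **Second**, the crucial point is to bound $\dim \mathfrak g_{x_0}$: I would show that the isotropy representation preserves a positive-definite inner product, so that $\mathfrak g_{x_0}$ embeds into $\mathfrak{so}(k)$, giving $\dim \mathfrak g_{x_0} \le \binom{k}{2} = k^2/2 - k/2$. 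The invariant inner product comes from the fact that $G_{x_0}$ is a closed subgroup of $\mathrm{GL}_k(\mathbb R)$ whose action on $\Omega$ fixes $x_0$, and properness of $\Omega$ forces this stabilizer to be compact — a bounded, hence relatively compact, subgroup of the orthogonal group of some adapted form. **Third**, combining $\dim \mathfrak g_{x_0} \le k^2/2 - k/2$ with $\dim(\mathfrak g(\Omega)\cdot x_0) \le k$ would naively give $k^2/2 + k/2$, which is too large; the refinement is that the scaling direction (the Euler field, coming from $\mathbb R_{>0}\cdot \id \subset G(\Omega)$ since $\Omega$ is a cone) already accounts for one dimension of the orbit, and the sharper bound on the full isotropy together with the cone structure trims this down to the stated $k^2/2 - k/2 + 1$.

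**The main obstacle** I anticipate is proving the compactness of the isotropy group $G_{x_0}$, which is where properness of the cone is essential and where the argument must be made carefully. The mechanism is that a proper cone contains no line, so $\Omega$ together with its closure bounds the orbit of $x_0$ in a way that prevents any one-parameter subgroup of $G_{x_0}$ from escaping to infinity; one shows the stabilizer is a bounded subset of $\mathrm{GL}_k(\mathbb R)$ by using a strictly $G(\Omega)$-invariant convex function (such as the logarithm of the characteristic function $\varphi_\Omega(x) = \int_{\Omega^*} e^{-\langle x,y\rangle}\,dy$ of the cone), whose Hessian provides the invariant positive-definite form. Establishing that this characteristic function is well-defined and strictly convex uses exactly that $\Omega$ is proper (so that the dual cone $\Omega^*$ has nonempty interior and the integral converges). **Finally**, I would assemble the dimension count, being careful that the "$+1$" reflects precisely the one-dimensional radial subgroup and is not double-counted against the orbit contribution.
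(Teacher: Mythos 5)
Your proposal has a genuine gap in the dimension count, and it occurs exactly where you flagged trouble. Your Step 2 only places the isotropy algebra $\mathfrak g_{x_0}$ inside $\mathfrak{so}(k)$, giving $\dim \mathfrak g_{x_0} \le \binom{k}{2} = \frac{k^2}{2} - \frac{k}{2}$; adding the orbit bound $k$ then yields $\frac{k^2}{2} + \frac{k}{2}$, which overshoots the target by $k-1$. Your Step 3 proposes to recover this via the claim that the scaling direction ``already accounts for one dimension of the orbit,'' but that mechanism cannot work: the radial subgroup $\mathbb R_{>0}\cdot \id$ does not lie in the isotropy, and for a homogeneous cone the orbit of $x_0$ is genuinely open in $\mathbb R^k$, so the orbit contribution really is $k$; at best one dimension could ever be shaved there, far short of the $k-1$ needed. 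All of the savings must come from a sharper isotropy bound, and your argument as stated does not supply one.

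The missing observation (which is the paper's key step) is that $\mathfrak g_{x_0}$ does not merely consist of matrices skew-symmetric with respect to the adapted inner product: its elements also annihilate the vector $x_0$. A skew-symmetric matrix killing a nonzero vector preserves the hyperplane $x_0^{\perp}$ and restricts to a skew-symmetric endomorphism of it; equivalently, at the group level, after the change of variables the compact group $G(\Omega)_{x_0}$ lies inside the stabilizer of $x_0$ in $O_k(\mathbb R)$, which is isomorphic to $O_{k-1}(\mathbb R)$. Hence
$$\dim \mathfrak g_{x_0} \le \frac{(k-1)(k-2)}{2} = \frac{k^2}{2} - \frac{3k}{2} + 1,$$
and adding the orbit bound $k$ gives exactly $\frac{k^2}{2} - \frac{k}{2} + 1$. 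With this correction your argument coincides with the paper's proof. Incidentally, the paper obtains compactness of the isotropy more cheaply than your characteristic-function route: $G(\Omega)_{x_0}$ preserves the open set $\Omega \cap (x_0 - \Omega)$, which is bounded precisely because the proper cone $\Omega$ contains no line, and a closed linear group preserving a bounded open set is compact. Your Vinberg-style argument via the Hessian of $\log \varphi_\Omega$ also works, but it is heavier machinery than the statement requires.
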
 

\begin{proof} 
For a fixed  $x \in \Omega$, consider the isotropy subgroup $G(\Omega)_x \subset G(\Omega).$ This subgroup is compact, since it leaves invariant the bounded open set $\Omega \cap (x - \Omega).$ Therefore, changing variables in $\mathbb R^k$ if necessary, we can assume that $G(\Omega)_x$ lies in the orthogonal group $O_k(\mathbb R).$ The group $O_k(\mathbb R)$ acts transitively on the sphere of radius $||x||$ in $\mathbb R^k,$ and the isotropy subgroup $I_x$ of $x$ under the $O_k(\mathbb R)$-action is isomorphic to $O_{k-1}(\mathbb R).$ Since $G(\Omega)_x \subset I_x,$ we have 
$$\dim G(\Omega)_x \leq \dim I_x = \frac{(k-2)(k-1)}{2} = \frac{k^2}{2} - \frac{3k}{2} +1,$$ 
from which the result follows. 
\end{proof} 

Now, let $H: \mathbb C^m \times \mathbb C^m \to \mathbb C^k$ be a Hermitian form on $\mathbb C^m$ taking values in $\mathbb C^k,$ where we follow the convention of linearity in the second variable and conjugate linearity in the first variable. Then $H$ is called $\Omega$-\emph{Hermitian} if for an open convex cone $\Omega \subset \mathbb R^k$ we have $H(w, w) \in \bar{\Omega} \setminus \left\{ 0 \right\}$ for all non-zero $w \in \mathbb C^m.$ Observe that if $H$ is $\Omega$-Hermitian and $\Omega$ is proper, then there exists a positive-definite linear combination of the components of $H.$ 

\begin{definition} \label{Siegeldomain}
A \emph{Siegel domain of the second kind} in $\mathbb C^n$ is a domain of the form
$$
S(\Omega,H):=\left\{(z,w)\in\mathbb C^k\times\mathbb C^{n-k}: \operatorname{Im } z-H(w,w)\in \Omega\right\}
$$
for some $1\le k\le n$, some open convex cone $\Omega\subset\mathbb R^k$, and some $\Omega$-Hermitian form $H$ on $\mathbb C^{n-k}$. 
\end{definition} 

Note that for $k=n$ we have $H=0$, so in this case $S(\Omega,H)$ is linearly equivalent to the domain  
$$
\left\{z\in\mathbb C^n:\operatorname{Im } z\in \Omega\right\}.
$$
At the other extreme, when $k=1$, the domain $S(\Omega,H)$ is linearly equivalent to
$$
\left\{(z,w)\in\mathbb C\times\mathbb C^{n-1}:\operatorname{Im } z-||w||^2>0\right\},
$$ 
which is an unbounded realization of the unit ball $B^n$ (see \cite[p.~31]{R}). 
In fact, the above result holds in more generality. If 
$$\Omega = \left\{x \in \mathbb R^k: x_1 > 0, \ldots, x_k >0 \right\}$$ 
and $S(\Omega, H)$ is homogeneous, then $S(\Omega, H)$ is linearly equivalent to a product of $k$ unbounded realisations of unit balls as above, and hence biholomorphic to a product of unit balls (see \cite[Theorems~A, B, C]{KT} and \cite{N1}). We now describe the group of holomorphic affine automorphisms of a Siegel domain of the second kind (see \cite[pp.~25--26]{PS}).

\begin{theorem}\label{Siegelaffautom}
Any holomorphic affine automorphism of $S(\Omega,H)$ has the form
$$
(z, w) \mapsto (Az+a+2iH(b,Bw)+iH(b,b), Bw+b), 
$$
with $a\in\mathbb R^k$, $b\in\mathbb C^{n-k}$, $A\in G(\Omega)$, $B\in \operatorname{GL}_{n-k}(\mathbb C)$, where
\begin{equation}
AH(w,w')=H(Bw,Bw')\label{assoc}
\end{equation}
for all $w,w'\in\mathbb C^{n-k}$.
\end{theorem}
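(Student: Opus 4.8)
The plan is to treat the two directions separately, spending little effort on sufficiency and concentrating on the characterization. For sufficiency I would observe that a map of the stated form is a composition of three elementary automorphisms of $S(\Omega,H)$: the linear map $(z,w)\mapsto(Az,Bw)$, which preserves the domain precisely because $A$ is real and $A(\operatorname{Im}z-H(w,w))=\operatorname{Im}(Az)-H(Bw,Bw)$ by \eqref{assoc} with $A\in G(\Omega)$; the ``Heisenberg'' translation $(z,w)\mapsto(z+2iH(b,w)+iH(b,b),w+b)$, which leaves $\operatorname{Im}z-H(w,w)$ unchanged by a direct expansion using $H(b,w)+H(w,b)=2\operatorname{Re}H(b,w)$ and $H(b,b)\in\mathbb R^k$; and a real translation $(z,w)\mapsto(z+a,w)$. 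Each is visibly a biholomorphism of $S(\Omega,H)$ onto itself, so this direction is immediate.

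The substance is the converse. I would write an arbitrary affine automorphism as $\Phi(z,w)=(Pz+Qw+c,\,Rz+Sw+d)$ and aim to show $R=0$, that $P$ is real and lies in $G(\Omega)$, and that $Q,c,S,d$ are forced into the stated shape. Throughout I would use that $\Omega$ is proper, so $\bar\Omega$ contains no line, together with the positivity $H(u,u)\in\bar\Omega\setminus\{0\}$ for $u\neq0$. The invariance hypothesis reads $\operatorname{Im}(Pz+Qw+c)-H(Rz+Sw+d,\,Rz+Sw+d)\in\Omega$ if and only if $\operatorname{Im}z-H(w,w)\in\Omega$.

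First I would fix $w$ and $\operatorname{Im}z$ and let $u=\operatorname{Re}z$ range over all of $\mathbb R^k$, noting that membership in the domain is independent of $u$. Evaluating the image defining function, the $w$-component of $\Phi$ is affine in $u$ with linear part $Ru$, so the $H$-term contributes $-H(Ru,Ru)$ at second order in $u$ while the $z$-component contributes only first order. Scaling $u\mapsto tu$ and letting $t\to\infty$, the dominant term $-t^2H(Ru,Ru)$ lies in $-\bar\Omega$; were $Ru\neq0$ this would eventually leave $\Omega$, contradicting properness, so $R=0$. With $R=0$ the only $u$-dependence of the image is the linear term $(\operatorname{Im}P)u$ inside $\operatorname{Im}z'$, and properness again forbids $\Omega$ from containing the resulting line unless $\operatorname{Im}P=0$; thus $P=:A$ is real. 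I expect these two growth/properness arguments to be the main obstacle, as they are where the geometry of the cone genuinely enters; everything afterward is polarization and bookkeeping.

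After this, writing $\xi=\operatorname{Im}z-H(w,w)$ (which ranges over all of $\mathbb R^k$), the condition becomes $A\xi+F(w)\in\Omega\iff\xi\in\Omega$, where $F(w)=\operatorname{Im}(Qw)+\operatorname{Im}c-H(Sw+d,Sw+d)+AH(w,w)$ collects the $\xi$-independent terms. For each fixed $w$ this says the affine map $\xi\mapsto A\xi+F(w)$ carries $\Omega$ onto itself, and a standard argument shows that an invertible affine self-map of a proper open convex cone must be linear; hence $F(w)=0$ and $A\in G(\Omega)$. Finally I would read off $F\equiv0$ by degree in $(w,\bar w)$: the constant term gives $\operatorname{Im}c=H(d,d)$; the degree-two term gives $AH(w,w)=H(Sw,Sw)$, which polarizes to \eqref{assoc} with $B:=S$; and the degree-one term, using $\operatorname{Im}(2iH(d,Sw))=2\operatorname{Re}H(d,Sw)=H(Sw,d)+H(d,Sw)$, forces $Qw=2iH(d,Sw)$, the uniqueness holding because a $\mathbb C$-linear map with values in $\mathbb R^k$ vanishes. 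Setting $b:=d$, $B:=S$, $a:=\operatorname{Re}c$, and noting that $A$ and $B$ are invertible since $\Phi$ is, yields exactly the asserted form.
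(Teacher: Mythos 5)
Your proposal is correct, but there is nothing in the paper to compare it against: Theorem \ref{Siegelaffautom} is stated there as a known result and attributed to Pyatetskii-Shapiro via the citation \cite[pp.~25--26]{PS}, with no proof given. What you have written is, in substance, the classical argument, and it is sound. The three places where the geometry of the cone genuinely enters are all handled correctly: (1) killing $R$ by letting $\operatorname{Re}z$ blow up, where the quadratic term $-t^2H(Ru,Ru)$ forces $\pm H(Ru,Ru)\in\bar\Omega$, hence $H(Ru,Ru)=0$ and $Ru=0$ by the $\Omega$-Hermitian property; (2) forcing $\operatorname{Im}P=0$ because a proper open convex cone contains no affine line; and (3) the rigidity step you called ``standard,'' which indeed works: if $A\xi+b$ maps $\Omega$ onto $\Omega$, then scaling by $\lambda>0$ gives $A\Omega+\lambda b=\Omega$ for all $\lambda>0$, hence $\Omega+(\lambda-\mu)b=\Omega$, so $\Omega$ is invariant under translation along $b$, which for a proper cone forces $b=0$ and $A\in G(\Omega)$. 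The remaining bookkeeping is also right: replacing $w$ by $tw$, $t\in\mathbb R$, separates $F(w)=0$ into homogeneous pieces; the quadratic piece polarizes to \eqref{assoc} (a sesquilinear form vanishing on the diagonal vanishes identically); the linear piece gives $\operatorname{Im}\bigl((Q-2iH(d,S\,\cdot))w\bigr)\equiv 0$, and a $\mathbb C$-linear map with values in $\mathbb R^k$ is zero; and the constant piece $\operatorname{Im}c=H(d,d)$ is exactly what converts $c$ into $a+iH(b,b)$ with $a=\operatorname{Re}c\in\mathbb R^k$, $b=d$, $B=S$, both $A$ and $B$ being invertible from the block-triangular Jacobian of $\Phi$. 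So your write-up supplies a complete, self-contained proof of a statement the paper only quotes.
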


A domain $S(\Omega,H)$ is called \emph{affinely homogeneous} if the group described in the above theorem, which we denote $\operatorname{Aff}(S(\Omega,H))$, acts on $S(\Omega,H)$ transitively. Denote by $G(\Omega,H)$ the subgroup of $G(\Omega)$ that consists of all transformations $A\in G(\Omega)$ as in Theorem \ref{Siegelaffautom}, that is, of all elements $A\in G(\Omega)$ for which there exists $B\in\ \operatorname{GL}_{n-k}(\mathbb C)$ such that (\ref{assoc}) holds. By \cite[Lemma 1.1]{D}, the subgroup $G(\Omega,H)$ is closed in $G(\Omega)$. It can be deduced from Theorem \ref{Siegelaffautom} that if $S(\Omega,H)$ is affinely homogeneous, the action of $G(\Omega,H)$ is transitive on $\Omega$ (see, e.g., \cite[proof of Theorem 8]{KMO}), so the cone $\Omega$ is homogeneous. Note also that since $\Omega$ is connected, the action of the identity component $G(\Omega, H)^\circ$ is also transitive on $\Omega.$ Conversely, if $G(\Omega,H)$ acts on $\Omega$ transitively, the domain $S(\Omega,H)$ is affinely homogeneous. 

After realising a homogeneous Kobayashi-hyperbolic manifold as a homogeneous Siegel domain of the second kind using the main result of \cite{N}, we then consider its automorphism group $\operatorname{Aut}(S(\Omega,H))$, and proceed by analysing the Lie algebra of this group, which we denote ${\mathfrak g}(S(\Omega,H))$. We therefore rely heavily on an explicit description of this Lie algebra (see \cite[Theorems 4 and 5]{KMO}), which is rather involved, and we present it now. This algebra is isomorphic to the (real) Lie algebra of complete holomorphic vector fields on $S(\Omega,H)$ (see \cite[pp.~209--210]{S}). 

\begin{theorem}\label{kmoalgebradescr}
The algebra ${\mathfrak g}={\mathfrak g}(S(\Omega,H))$ admits a grading
$$
{\mathfrak g}={\mathfrak g}_{-1}\oplus{\mathfrak g}_{-1/2}\oplus{\mathfrak g}_0\oplus{\mathfrak g}_{1/2}\oplus{\mathfrak g}_1,
$$
with ${\mathfrak g}_{\nu}$ being the eigenspace with eigenvalue $\nu$ of $\operatorname{ad }\partial$, where
$$\partial:=z\cdot\frac{\partial}{\partial z}+\frac{1}{2}w\cdot\frac{\partial}{\partial w}.$$ 
Here
$$
\begin{array}{ll}
{\mathfrak g}_{-1}=\displaystyle\left\{a\cdot\frac{\partial}{\partial z}:a\in\mathbb R^k\right\},&\dim {\mathfrak g}_{-1}=k,\\
\vspace{-0.1cm}\\
{\mathfrak g}_{-1/2}=\displaystyle\left\{2i H(b,w)\cdot\frac{\partial}{\partial z}+b\cdot\frac{\partial}{\partial w}:b\in\mathbb C^{n-k}\right\},&\dim {\mathfrak g}_{-1/2}=2(n-k),
\end{array}
$$
and ${\mathfrak g}_0$ consists of all vector fields of the form
\begin{equation}
(Az)\cdot\frac{\partial}{\partial z}+(Bw)\cdot\frac{\partial}{\partial w},\label{g0}
\end{equation}
with $A\in{\mathfrak g}(\Omega)$, $B\in{\mathfrak{gl}}_{n-k}(\mathbb C)$ and
\begin{equation}
AH(w,w')=H(Bw,w')+H(w,Bw')\label{assoc1}
\end{equation}
for all $w,w'\in\mathbb C^{n-k}$. Furthermore, we have 
\begin{equation} 
\dim {\mathfrak g}_{1/2}\le 2(n-k),\qquad \dim {\mathfrak g}_1\le k.\label{estimm}
\end{equation}
\end{theorem}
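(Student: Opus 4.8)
The plan is to exploit the one-parameter group of dilations generated by $\partial$ and reduce almost everything to weight bookkeeping, isolating a single genuinely analytic input. First I would verify directly that $\partial\in\mathfrak g$: its flow is $\phi_t\colon(z,w)\mapsto(e^tz,e^{t/2}w)$, and since $\operatorname{Im}(e^tz)-H(e^{t/2}w,e^{t/2}w)=e^t(\operatorname{Im}z-H(w,w))$ while $\Omega$ is a cone, each $\phi_t$ preserves $S(\Omega,H)$. Hence $\operatorname{ad}\partial$ is a well-defined endomorphism of the finite-dimensional Lie algebra $\mathfrak g$ (finite-dimensionality coming from the fact that $S(\Omega,H)$ is biholomorphic to a bounded domain, so $\operatorname{Aut}(S(\Omega,H))$ is a Lie group). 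A direct computation shows $[\partial,z^\alpha w^\beta\frac{\partial}{\partial z_j}]=(|\alpha|+\tfrac12|\beta|-1)z^\alpha w^\beta\frac{\partial}{\partial z_j}$ and $[\partial,z^\alpha w^\beta\frac{\partial}{\partial w_l}]=(|\alpha|+\tfrac12|\beta|-\tfrac12)z^\alpha w^\beta\frac{\partial}{\partial w_l}$; that is, assigning weight $1$ to each $z$-variable, weight $\tfrac12$ to each $w$-variable, and subtracting the weight of the differentiated direction, every monomial field is an eigenvector of $\operatorname{ad}\partial$ whose eigenvalue is its weight.

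Granting this, the grading is equivalent to the assertion that every $X\in\mathfrak g$ is a polynomial field of bounded weighted degree, with only the weights $-1,-\tfrac12,0,\tfrac12,1$ occurring. This is the step I expect to be the main obstacle and the one genuinely analytic ingredient, resting on the completeness of the fields in $\mathfrak g$ together with the hyperbolicity of $S(\Omega,H)$; I would import it from \cite{KMO} and \cite{S}. Concretely, one shows that a complete holomorphic vector field on such a domain is polynomial, with its $\frac{\partial}{\partial z}$-component of weighted degree at most $2$ and its $\frac{\partial}{\partial w}$-component of weighted degree at most $\tfrac32$. The underlying mechanism is Cartan-type rigidity — a complete field vanishing to second order at an interior point is trivial — which bounds $\dim\mathfrak g$ and, through a boundary analysis, forces the degree bounds. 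Combined with the monomial computation above, this makes $\operatorname{ad}\partial$ semisimple with spectrum in $\{-1,-\tfrac12,0,\tfrac12,1\}$, yields the direct-sum decomposition, and gives $[\mathfrak g_\mu,\mathfrak g_\nu]\subseteq\mathfrak g_{\mu+\nu}$ via the Jacobi identity.

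With the grading in hand, I would identify the non-positive pieces by matching weights against the explicit affine automorphisms of Theorem \ref{Siegelaffautom}. The weight count leaves only constant $z$-translations in $\mathfrak g_{-1}$, giving $\{a\cdot\frac{\partial}{\partial z}:a\in\mathbb R^k\}$ of dimension $k$; in $\mathfrak g_{-1/2}$ a constant $w$-translation together with the $w$-linear $z$-term forced by tangency to $\partial S(\Omega,H)$, giving $\{2iH(b,w)\cdot\frac{\partial}{\partial z}+b\cdot\frac{\partial}{\partial w}:b\in\mathbb C^{n-k}\}$ of real dimension $2(n-k)$; and in $\mathfrak g_0$ the linear fields $(Az)\cdot\frac{\partial}{\partial z}+(Bw)\cdot\frac{\partial}{\partial w}$. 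Differentiating the subgroups $A_t=e^{tA}$, $B_t=e^{tB}$ inside the constraint $A_tH(w,w')=H(B_tw,B_tw')$ of Theorem \ref{Siegelaffautom} at $t=0$ produces exactly the infinitesimal relation (\ref{assoc1}), and $A\in\mathfrak g(\Omega)$ since $A_t\in G(\Omega)$; that these exhaust the weight-$0$ fields again uses the rigidity to exclude the a priori possible $w$-quadratic term in the $z$-component.

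Finally, for the bounds in (\ref{estimm}) I would argue that an element of $\mathfrak g_1$ (respectively $\mathfrak g_{1/2}$) is determined by the coefficient of its top-weight monomial, that is, by a parameter in $\mathbb R^k$ (respectively $\mathbb C^{n-k}$), and that this symbol map is injective: a positive-weight field whose leading coefficient vanishes would vanish to high order at the base point and hence, by the same Cartan rigidity, be identically zero. This gives $\dim\mathfrak g_1\le k$ and $\dim\mathfrak g_{1/2}\le 2(n-k)$. I would stress that only inequalities are available here because, for a Siegel domain that is not assumed symmetric, there need be no grade-reversing involution pairing the positive and negative parts; equality holds precisely in the symmetric case, as one checks directly for the ball.
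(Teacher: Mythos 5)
The paper itself contains no proof of this theorem: it is stated as background quoted from \cite[Theorems 4 and 5]{KMO} and \cite{S}, so the only meaningful benchmark is those sources. Most of your scaffolding is consistent with them: $\partial$ does generate the dilation flow $(z,w)\mapsto(e^tz,e^{t/2}w)$, which preserves $S(\Omega,H)$ because $\Omega$ is a cone; the weight computation for monomial fields is correct; once one knows that every complete field is polynomial with the stated weighted-degree bounds (the step you rightly identify as the analytic core and import from \cite{KMO} and \cite{S}), the eigenspace decomposition and the inclusion $[\mathfrak g_\mu,\mathfrak g_\nu]\subseteq\mathfrak g_{\mu+\nu}$ follow; and $\mathfrak g_{-1}$, $\mathfrak g_{-1/2}$, and the linear fields in $\mathfrak g_0$ are correctly identified by differentiating the affine automorphisms of Theorem \ref{Siegelaffautom}, since fields of those weights (without the $w$-quadratic term in the $z$-component) have affine coefficients and therefore affine flows.

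The genuine gap is your argument for the estimates (\ref{estimm}), which are part of the statement and which you attempt to prove yourself rather than import. You claim an element of $\mathfrak g_1$ is determined by "the coefficient of its top-weight monomial, a parameter in $\mathbb R^k$," with injectivity following from Cartan rigidity because a field with vanishing leading coefficient "vanishes to high order at the base point." This fails on three counts. First, the only point at which positive-weight fields automatically vanish to high order is the origin, which lies on $\partial S(\Omega,H)$, not inside it, and Cartan's uniqueness theorem applies only at interior points; in fact, if the argument were valid at the origin it would prove $\mathfrak g_1=0$ for \emph{every} Siegel domain, since every field $a(z,z)\cdot\partial/\partial z+b(z,w)\cdot\partial/\partial w$ in $\mathfrak g_1$ vanishes to second order there --- false already for the unit ball, where $\dim\mathfrak g_1=k$. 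Second, at an interior point such as $p=(i\mathbf x_0,0)$ with $\mathbf x_0\in\Omega$, vanishing of the natural candidate leading coefficient $a(\mathbf x_0,\mathbf x_0)$ gives only $X(p)=0$, not $dX_p=0$ (the $1$-jet still contains $a(\mathbf x_0,\cdot\,)$ and $b(i\mathbf x_0,\cdot\,)$), and Cartan's theorem requires the full $1$-jet to be trivial. Third, and most basically, an element of $\mathfrak g_1$ is a priori parametrized by a symmetric bilinear form $a$ together with a bilinear map $b$, as in Theorem \ref{descrg1}; the assertion that it is pinned down by a single vector in $\mathbb R^k$ is precisely the inequality $\dim\mathfrak g_1\le k$ you are trying to prove, so the argument is circular. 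The same objections apply to $\mathfrak g_{1/2}$. These bounds are exactly the content of \cite[Theorem 5]{KMO} and require a genuine argument; they must be imported alongside the polynomiality statement or proved in earnest, as they are not a formal consequence of the grading plus rigidity. A smaller instance of the same looseness: excluding the $w$-quadratic term $q(w,w)\cdot\partial/\partial z$ from $\mathfrak g_0$ is not a rigidity statement either; the standard mechanism is properness of $\Omega$ (so $\bar\Omega\cap(-\bar\Omega)=\{0\}$), which forces $A H(w,w)+\operatorname{Im}q(w,w)=H(Bw,Bw)$, after which the substitution $w\mapsto e^{i\theta}w$ kills $q$.
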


The subspace $\mathfrak g_0$ is in fact a subalgebra of $\mathfrak g$. The matrices $A$ that appear in (\ref{g0}) clearly form the Lie algebra of $G(\Omega,H),$ and ${\mathfrak g}_{-1}\oplus{\mathfrak g}_{-1/2}\oplus{\mathfrak g}_0$ is isomorphic to the Lie algebra of the group $\operatorname{Aff }(S(\Omega,H))$. This can be seen by comparing conditions (\ref{assoc}) and (\ref{assoc1}). 

Following the approach in \cite{S}, for a pair of matrices $A,B$ satisfying (\ref{assoc1}) we say that $B$ is \emph{associated to $A$} (with respect to $H$). Let ${\mathcal L}$ be the (real) subspace of ${\mathfrak{gl}}_{n-k}(\mathbb C)$ of all matrices associated to the zero matrix in ${\mathfrak g}(\Omega)$, i.e., matrices skew-Hermitian with respect to each component of $H$. That is, 
$$\mathcal L = \left\{ B \in \mathfrak{gl}_{n-k}(\mathbb C) : H(B \cdot, \cdot) + H(\cdot, B \cdot) = 0 \right\}.$$ 
We now set $s:=\dim {\mathcal L}$, and note that 
\begin{equation}
\dim {\mathfrak g}_0 = s + \dim G(\Omega, H) \leq s+\dim {\mathfrak g}(\Omega).\label{estim1}
\end{equation}
By Theorem \ref{kmoalgebradescr} and the above inequality we obtain
\begin{equation}
d(S(\Omega,H))\le k+2(n-k)+s+\dim {\mathfrak g}(\Omega)+\dim {\mathfrak g}_{1/2}+ \dim {\mathfrak g}_1,\label{estim 8}
\end{equation}
which, combined with (\ref{estimm}), leads to
\begin{equation}
d(S(\Omega,H))\le 2k+4(n-k)+s+\dim {\mathfrak g}(\Omega).\label{estim2}
\end{equation}
The subspace $\mathcal L$ lies in the Lie algebra of matrices skew-Hermitian with respect to any linear combination $\mathbf H$ of the components of the Hermitian form $H$. Since $\mathbf H$ can be chosen to be positive-definite, 
\begin{equation}
s\le (n-k)^2.\label{ests}
\end{equation}
The above inequality combined with (\ref{estim2}) yields 
\begin{equation}
d(S(\Omega,H))\le 2k+4(n-k)+(n-k)^2+\dim {\mathfrak g}(\Omega).\label{estim3}
\end{equation}
Lastly, combining (\ref{estim3}) with inequality (\ref{gest}) we deduce the following useful upper bound: 
\begin{equation}
d(S(\Omega,H))\le\displaystyle\frac{3k^2}{2}-k\left(2n+\frac{5}{2}\right)+n^2+4n+1.\label{estim4}
\end{equation}

In Theorem \ref{kmoalgebradescr} above, explicit descriptions of the first three components of the graded Lie algebra are provided. Explicit descriptions of $\mathfrak g_{1/2}$ and $\mathfrak g_1$ are considerably more complicated, and were first given in \cite{S} (see Chapter V, Propositions 2.1 and 2.2). We present these now, beginning with the  $\mathfrak g_{1/2}$ component. 

\begin{theorem}\label{descrg1/2}
The subspace ${\mathfrak g}_{1/2}$ consists of all vector fields of the form
$$
2iH(\Phi(\bar z),w)\cdot\frac{\partial}{\partial z}+(\Phi(z)+c(w,w))\cdot\frac{\partial}{\partial w},
$$
where $\Phi:\mathbb C^k\to\mathbb C^{n-k}$ is a $\mathbb C$-linear map such that for every ${\mathbf w}\in\mathbb C^{n-k}$ one has
\begin{equation}
\Phi_{{\mathbf w}}:=\left[x\mapsto \operatorname{Im } H({\mathbf w},\Phi(x)),\,\, x\in\mathbb R^k\right]\in{\mathfrak g}(\Omega),\label{Phiw0}
\end{equation}
and $c:\mathbb C^{n-k}\times\mathbb C^{n-k}\to\mathbb C^{n-k}$ is a symmetric $\mathbb C$-bilinear form on $\mathbb C^{n-k}$ with values in $\mathbb C^{n-k}$ satisfying the condition
\begin{equation}
H(w,c(w',w'))=2iH(\Phi(H(w',w)),w')\label{cond1}
\end{equation}
for all $w,w'\in\mathbb C^{n-k}$. 
\end{theorem}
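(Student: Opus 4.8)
The plan is to establish the characterization in both directions, using throughout the grading of Theorem \ref{kmoalgebradescr}, the explicit descriptions of $\mathfrak g_{-1}$, $\mathfrak g_{-1/2}$ and $\mathfrak g_0$ given there, and the standard fact (part of the structure theory of \cite{KMO} and \cite{S}) that every complete holomorphic vector field on $S(\Omega,H)$ is a polynomial of degree at most two. First I would fix the shape of a general element $X=P\cdot\frac{\partial}{\partial z}+Q\cdot\frac{\partial}{\partial w}$ of $\mathfrak g_{1/2}$ by weight bookkeeping. Assigning $z$ weight $1$, $w$ weight $1/2$, $\frac{\partial}{\partial z}$ weight $-1$ and $\frac{\partial}{\partial w}$ weight $-1/2$, the eigenvalue condition $[\partial,X]=\tfrac12 X$ together with the degree-two bound leaves, for $P$, only the monomials $z_iw_j$ (so that $P$ is $\mathbb C$-bilinear in $(z,w)$, the competing cubic terms $w_aw_bw_c$ being excluded by the degree bound) and, for $Q$, only the monomials $z_i$ and $w_aw_b$, whence $Q=\Psi(z)+c(w,w)$ for some $\mathbb C$-linear $\Psi\colon\mathbb C^k\to\mathbb C^{n-k}$ and some symmetric $\mathbb C$-bilinear form $c$.

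Next I would compute commutators with the explicitly known negative graded pieces. Bracketing $X$ with a generator $a\cdot\frac{\partial}{\partial z}\in\mathfrak g_{-1}$, $a\in\mathbb R^k$, produces $-P(a,w)\cdot\frac{\partial}{\partial z}-\Psi(a)\cdot\frac{\partial}{\partial w}$, which must lie in $\mathfrak g_{-1/2}$; comparing with the description of $\mathfrak g_{-1/2}$ forces $P(a,w)=2iH(\Psi(a),w)$ for real $a$, and extending $\mathbb C$-linearly (using that $H$ is conjugate-linear in its first argument) gives $P(z,w)=2iH(\Psi(\bar z),w)$. Hence a single linear map $\Phi:=\Psi$ controls both components and $X$ has exactly the asserted form.

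The two conditions on $(\Phi,c)$ then emerge from the bracket with $\mathfrak g_{-1/2}$. For $u_b=2iH(b,w)\cdot\frac{\partial}{\partial z}+b\cdot\frac{\partial}{\partial w}$ the commutator $[X,u_b]$ has eigenvalue $0$, so it lies in $\mathfrak g_0$; by (\ref{g0}) its coefficient of $\frac{\partial}{\partial z}$ must be linear in $z$ and free of $w$. A direct computation gives this coefficient as
\[
\underbrace{2iH(b,\Phi(z))-2iH(\Phi(\bar z),b)}_{\text{linear in }z}\;+\;\underbrace{\big(2iH(b,c(w,w))-P\big(2iH(b,w),w\big)\big)}_{\text{quadratic in }w},
\]
where $P$ denotes the underlying $\mathbb C$-bilinear form. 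Requiring the first group to define an element of $\mathfrak g(\Omega)$ — after simplifying via $\overline{H(b,\Phi(x))}=H(\Phi(x),b)$, which turns it into $-4\operatorname{Im}H(b,\Phi(x))$ for $x\in\mathbb R^k$ — is exactly condition (\ref{Phiw0}) with $\mathbf w=b$, up to a real scalar; requiring the second group to vanish, after using the Hermitian symmetry and conjugate-linearity of $H$, is precisely condition (\ref{cond1}). This proves that every element of $\mathfrak g_{1/2}$ has the stated form and satisfies (\ref{Phiw0}) and (\ref{cond1}).

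For the converse I would verify that any $X$ of this form with $\Phi,c$ satisfying (\ref{Phiw0}) and (\ref{cond1}) is genuinely complete, hence lies in $\mathfrak g_{1/2}$. The natural route is to check that the associated real vector field is tangent to the boundary of $S(\Omega,H)$ — equivalently, that its flow preserves the relation $\operatorname{Im}z-H(w,w)\in\Omega$ — with (\ref{Phiw0}) controlling the cone direction and (\ref{cond1}) the Hermitian part, after which completeness follows because $\mathfrak g$ is the full finite-dimensional algebra of infinitesimal automorphisms of the homogeneous hyperbolic domain. I expect this sufficiency direction to be the main obstacle: the necessity computations are mechanical once the factors of $i$ and the conjugate-linearity of $H$ are tracked carefully, whereas showing that (\ref{Phiw0}) and (\ref{cond1}) actually guarantee completeness requires either integrating $X$ to a one-parameter group of automorphisms or invoking the maximality of the graded algebra $\mathfrak g_{-1}\oplus\mathfrak g_{-1/2}\oplus\mathfrak g_0\oplus\mathfrak g_{1/2}\oplus\mathfrak g_1$ established in \cite{KMO} and \cite{S}. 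Within the forward direction the most delicate point is isolating the quadratic-in-$w$ part of $[X,u_b]$ and confirming that its vanishing is equivalent to (\ref{cond1}) rather than to a strictly stronger identity.
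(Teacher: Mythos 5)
First, a remark on the comparison itself: the paper does not prove this theorem at all. It is presented as background, quoted from \cite{S} (Chapter V, Proposition 2.1) within the framework of \cite[Theorems 4 and 5]{KMO}, so there is no in-paper argument for your proof to match; the question is whether your attempt stands on its own. Your necessity half essentially does, and it is the standard graded-bracket derivation. I checked the key computation: the $\frac{\partial}{\partial z}$-coefficient of $[X,u_b]$ equals $2iH(b,\Phi(z))-2iH(\Phi(\bar z),b)+2iH(b,c(w,w))+4H(\Phi(H(w,b)),w)$; for real $z=x$ the linear packet collapses to $-4\operatorname{Im}H(b,\Phi(x))$, so membership in $\mathfrak g_0$ gives exactly (\ref{Phiw0}), and vanishing of the quadratic packet is precisely (\ref{cond1}) with $(b,w)$ renamed $(w,w')$ --- no stronger identity appears, resolving the point you were worried about. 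The soft spot is your appeal to an a priori ``polynomial of degree at most two'' bound to discard the cubic monomials $w_aw_bw_c$ from the $\frac{\partial}{\partial z}$-component: no such bound is among the facts quoted in Theorem \ref{kmoalgebradescr} (which gives only the grading, the three lowest components, and the estimates (\ref{estimm})), and in the sources it is intertwined with the very descriptions being proved, so the citation is borderline circular. It is, however, repairable inside your own argument: a cubic term $T(w,w,w)$ would contribute $3T(b,w,w)$ to the quadratic-in-$w$ packet of $[X,u_b]$, and this term is $\mathbb C$-linear in $b$, whereas $2iH(b,c(w,w))$ and $P(2iH(b,w),w)$ are conjugate-linear in $b$; replacing $b$ by $ib$ separates the two and forces $T=0$.

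The genuine gap is the converse direction (that every field of the stated form with (\ref{Phiw0}) and (\ref{cond1}) is complete, hence lies in $\mathfrak g_{1/2}$), which you flag as the main obstacle but leave unresolved, and the route you propose fails as stated. On an unbounded domain, tangency of the real field to the boundary plus invariance of the defining inequality does not imply completeness: on the upper half-plane the field $z^3\frac{\partial}{\partial z}$ is tangent to the real axis and its local flow preserves the half-plane, yet the trajectory through $iy$ escapes to infinity at the finite time $t=-1/(2y^2)$. Escape to infinity inside the domain, not passage through the boundary, is the failure mode, and nothing in your sketch excludes it. Your fallback --- ``invoking the maximality of the graded algebra'' from \cite{KMO} and \cite{S} --- is circular, since what that maximality asserts about the $\tfrac12$-eigenspace is exactly the statement being proved. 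So your proposal establishes only the inclusion of $\mathfrak g_{1/2}$ into the stated set of fields; the reverse inclusion requires genuinely integrating such a field to a one-parameter group of automorphisms, which is what Satake does. (As it happens, the inclusion you did prove is the only direction this paper ever uses: in each application one shows $\Phi=0$, and then (\ref{cond1}) together with the nondegeneracy of $H$ gives $c=0$, hence $\mathfrak g_{1/2}=0$.)
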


Further, the component ${\mathfrak g}_1$ admits the following description.

\begin{theorem}\label{descrg1}
The subspace ${\mathfrak g}_1$ consists of all vector fields of the form
$$
a(z,z)\cdot\frac{\partial}{\partial z}+b(z,w)\cdot\frac{\partial}{\partial w},
$$
where $a:\mathbb R^k\times\mathbb R^k\to\mathbb R^k$ is a symmetric $\mathbb R$-bilinear form on $\mathbb R^k$ with values in $\mathbb R^k$ {\rm (}which we extend to a symmetric $\mathbb C$-bilinear form on $\mathbb C^k$ with values in $\mathbb C^k${\rm )} such that for every ${\mathbf x}\in\mathbb R^k$ one has
\begin{equation}
A_{{\mathbf x}}:=\left[x\mapsto a({\mathbf x},x),\,\,x\in\mathbb R^k\right]\in{\mathfrak g}(\Omega),\label{idents1}
\end{equation}
and $b:\mathbb C^k\times\mathbb C^{n-k}\to\mathbb C^{n-k}$ is a $\mathbb C$-bilinear map such that, if for ${\mathbf x}\in\mathbb R^k$ one sets
\begin{equation}
B_{{\mathbf x}}:=\left[w\mapsto\frac{1}{2}b({\mathbf x},w),\,\,w\in\mathbb C^{n-k}\right],\label{idents2}
\end{equation}
the following conditions are satisfied:
\begin{itemize}
\item[{\rm (i)}] $B_{{\mathbf x}}$ is associated to $A_{{\mathbf x}}$ and $\operatorname{Im} \operatorname{tr} B_{{\mathbf x}}=0$ for all ${\mathbf x}\in\mathbb R^k$,
\item[{\rm (ii)}] for every pair ${\mathbf w},{\mathbf w}'\in\mathbb C^{n-k}$ one has
$$
B_{{\mathbf w},{\mathbf w}'}:=\left[x\mapsto\Im H({\mathbf w'},b(x,{\mathbf w})),\,\,x\in\mathbb R^k\right]\in{\mathfrak g}(\Omega),
$$
\item[{\rm (iii)}] $H(w,b(H(w',w''),w''))=H(b(H(w'',w),w'),w'')$ for all $w,w',w''\in\mathbb C^{n-k}$.
\end{itemize} 
\end{theorem}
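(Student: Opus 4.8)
The plan is to realise $\mathfrak g_1$ as the eigenvalue-$1$ eigenspace of $\operatorname{ad}\partial$ and to extract the stated conditions from the bracket relations with the lower-weight components, whose explicit descriptions are already available from Theorems \ref{kmoalgebradescr} and \ref{descrg1/2}. Writing a candidate field as $X=f\cdot\frac{\partial}{\partial z}+g\cdot\frac{\partial}{\partial w}$ with holomorphic components, the eigenvalue equation $[\partial,X]=X$ unwinds to $\partial f=2f$ and $\partial g=\tfrac32 g$, since $\partial$ assigns weight $1$ to each $z_j$ and weight $\tfrac12$ to each $w_j$. As the weights are positive, $f$ and $g$ must be weighted-homogeneous polynomials: $f$ a sum of monomials of bidegrees $(2,0)$, $(1,2)$, $(0,4)$ in $(z,w)$, and $g$ a sum of bidegrees $(1,1)$, $(0,3)$. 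The goal is then to show that every bidegree except $(2,0)$ for $f$ and $(1,1)$ for $g$ must vanish, so that $f=a(z,z)$ and $g=b(z,w)$, and afterwards to read off conditions (i)--(iii).

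The first reduction uses $\mathfrak g_{-1}$. For $v\in\mathbb R^k$ put $T_v=v\cdot\frac{\partial}{\partial z}\in\mathfrak g_{-1}$; a direct computation gives $[X,T_v]=-2\big(a(v,z)\cdot\frac{\partial}{\partial z}+\tfrac12 b(v,w)\cdot\frac{\partial}{\partial w}\big)$ together with a $z$-component contribution of bidegree $(0,2)$ coming from the $(1,2)$-part of $f$. Because the grading by $\operatorname{ad}\partial$ gives $[\mathfrak g_1,\mathfrak g_{-1}]\subseteq\mathfrak g_0$, and every element of $\mathfrak g_0$ is linear of the form (\ref{g0}), that $(0,2)$ contribution must vanish, which kills the $(1,2)$-part of $f$; matching the surviving linear field against (\ref{g0}) and its compatibility condition (\ref{assoc1}) then yields precisely $A_{\mathbf x}\in\mathfrak g(\Omega)$, i.e. (\ref{idents1}), together with the statement that $B_{\mathbf x}$ is associated to $A_{\mathbf x}$. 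To remove the remaining $(0,4)$-part of $f$ and the $(0,3)$-part of $g$ I would bracket $X$ against the generic element $Y_b=2iH(b,w)\cdot\frac{\partial}{\partial z}+b\cdot\frac{\partial}{\partial w}$ of $\mathfrak g_{-1/2}$: since $[\mathfrak g_1,\mathfrak g_{-1/2}]\subseteq\mathfrak g_{1/2}$, the result must take the constrained shape of Theorem \ref{descrg1/2}, whose $z$-component has bidegree $(1,1)$ and whose $w$-component has bidegrees $(1,0)$ and $(0,2)$. Comparing both components bidegree by bidegree forces the higher-order parts of $f$ and $g$ to drop out, and reading off the requirement (\ref{Phiw0}) for the resulting $\Phi$ gives condition (ii).

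With $f=a(z,z)$ and $g=b(z,w)$ in hand, the final conditions come from the deeper compatibility relations. Condition (iii) should be obtained by imposing that the $\mathfrak g_{1/2}$-element $[X,Y_b]$ satisfies its own defining identity (\ref{cond1}) on the associated symmetric form $c$; equivalently it follows from the Jacobi identity applied to $X$ and two elements of $\mathfrak g_{-1/2}$, using $[\mathfrak g_{-1/2},\mathfrak g_{-1/2}]\subseteq\mathfrak g_{-1}$ and the already-computed action of $[X,\mathfrak g_{-1}]$. The trace normalisation $\operatorname{Im}\operatorname{tr}B_{\mathbf x}=0$ is the reality/completeness constraint invisible at the purely formal tangency level; I would derive it from the requirement that the one-parameter group generated by $X$ consists of genuine automorphisms of $S(\Omega,H)$, i.e. that $X$ is complete. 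For the converse direction I would verify that any pair $(a,b)$ obeying (i)--(iii) yields a field lying in $\mathfrak g$, either by reversing the bracket computations to confirm that $X$ is tangent to the boundary $\partial S(\Omega,H)$ or by exhibiting its flow explicitly.

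The main obstacle is the middle step: rigorously eliminating the $(0,4)$- and $(0,3)$-monomials. These do not appear in $[X,T_v]$ at all, and they survive into $[X,Y_b]$ only through cancellations, so ruling them out requires a careful two-component match against the constrained form of Theorem \ref{descrg1/2} rather than a one-line degree count. Closely related is the derivation of the nonlinear identity (iii), which encodes the associativity of the underlying structure and is the most delicate of the three conditions; I expect the cleanest route to run through the symmetry of the $c$-form in (\ref{cond1}) combined with the Jacobi identity.
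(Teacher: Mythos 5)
First, a structural point: the paper does not prove Theorem \ref{descrg1} at all. It is imported from the literature --- the text preceding it attributes both Theorems \ref{descrg1/2} and \ref{descrg1} to \cite[Chapter V, Propositions 2.1 and 2.2]{S} and \cite[Theorems 4 and 5]{KMO} --- so there is no proof of the paper's own to compare yours against, and your proposal can only be judged on its own merits. Judged that way, your skeleton for the forward inclusion is sound, and the step you flag as the main obstacle does go through exactly along the lines you suggest. Writing $\beta$ for the parameter of your $\mathfrak g_{-1/2}$-field $Y_\beta$ (to avoid a clash with the bilinear map $b$), and $f_{(0,4)}$, $g_{(0,3)}$ for the purely $w$-dependent parts of $f$ and $g$: the bidegree-$(0,3)$ part of the $z$-component of $[X,Y_\beta]$ equals $2iH(\beta,g_{(0,3)}(w))$ minus the derivative of $f_{(0,4)}$ along $\beta$, and it must vanish because the $z$-component of any element of $\mathfrak g_{1/2}$ has pure bidegree $(1,1)$. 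Since $H$ is conjugate-linear in its first slot, the first term is antilinear in $\beta$ while the second is linear in $\beta$, so replacing $\beta$ by $i\beta$ forces each to vanish separately; hence $f_{(0,4)}=0$, and pairing with a positive-definite linear combination of the components of $H$ gives $g_{(0,3)}=0$. Moreover, reading off $\Phi_\beta(z)=-b(z,\beta)$ from the bidegree-$(1,0)$ part of the $w$-component, condition (\ref{Phiw0}) for $\Phi_\beta$ is precisely condition (ii) (the pairs $(\beta,\mathbf w')$ exhaust all pairs), and condition (\ref{cond1}) applied to $[X,Y_\beta]$ is precisely condition (iii) after renaming variables; your $\mathfrak g_{-1}$-bracket likewise delivers (\ref{idents1}), the realness of $a$, and the associatedness statement in (i).

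The genuine gaps are the two points you delegate to ``completeness.'' First, the trace constraint $\operatorname{Im}\operatorname{tr}B_{\mathbf x}=0$: membership in $\mathfrak g$ \emph{is} by definition completeness of the vector field, so ``derive it from the requirement that $X$ is complete'' restates the problem rather than solving it; and it genuinely cannot come out of the bracket calculus you set up, because the only brackets that could see it, namely $[\mathfrak g_1,\mathfrak g_{1/2}]\subseteq\mathfrak g_{3/2}=0$ and $[\mathfrak g_1,\mathfrak g_1]\subseteq\mathfrak g_2=0$, either presuppose the descriptions of $\mathfrak g_{1/2}$ and $\mathfrak g_1$ (circular) or are vacuous when those spaces vanish, as they do for many of the domains in this paper. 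Second, the reverse inclusion --- that every pair $(a,b)$ satisfying (i)--(iii) integrates to a one-parameter group of automorphisms --- is the bulk of the actual proof in \cite{S}, and your first proposed route, tangency to $\partial S(\Omega,H)$, is insufficient in principle: on an unbounded domain tangency does not imply completeness. (On the upper half-plane, $z^3\,\partial/\partial z$ is tangent to the real axis, yet its trajectory from any purely imaginary point escapes to infinity in finite time --- which is exactly why it does not belong to $\mathfrak{sl}_2(\mathbb R)$, and why the grading of Theorem \ref{kmoalgebradescr} stops at weight $1$.) Your second route, exhibiting the flow explicitly, is the right idea but is not carried out. In sum: your proposal, once the computations above are filled in, proves the containment of $\mathfrak g_1$ in the space of fields satisfying (\ref{idents1}), associatedness, (ii) and (iii); the trace condition and the converse inclusion remain unproved and require the substantive analysis of \cite[Chapter V]{S}.
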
 

We now recall the well-known classification, up to linear equivalence, of homogeneous open convex cones in dimensions $k=2,3,4,5$ not containing entire lines (see \cite[pp.~38--41]{KT}). Utilising the notation in \cite[p.~7]{FK}, $\Lambda_n$ denotes the $n$-dimensional Lorentz cone. 
\begin{itemize}

\item [$k=2$:] 
\begin{itemize}
\item[]
$\Omega_1:=\left\{(x_1,x_2)\in\mathbb R^2:x_1>0,\,\,x_2>0\right\}$, where the algebra ${\mathfrak g}(\Omega_1)$ consists of all diagonal matrices, hence $\dim {\mathfrak g}(\Omega_1)=2$,
\end{itemize}
\vspace{0.3cm}

\item [$k=3$:] 
\begin{itemize}
\item[(i)] $\Omega_2:=\left\{(x_1,x_2,x_3)\in\mathbb R^3:x_1>0,\,\,x_2>0,\,\,x_3>0\right\}$, where the algebra ${\mathfrak g}(\Omega_2)$ consists of all diagonal matrices, hence $\dim {\mathfrak g}(\Omega_2)=3$,
\vspace{0.1cm}
\item[(ii)] $\Omega_3:= \Lambda_3 = \left\{(x_1,x_2,x_3)\in\mathbb R^3:x_1^2-x_2^2-x_3^2>0,\,\,x_1>0\right\}$, where we have ${\mathfrak g}(\Omega_3)={\mathfrak c}({\mathfrak{gl}}_3(\mathbb R))\oplus{\mathfrak{so}}_{1,2}$, hence $\dim {\mathfrak g}(\Omega_3)=4$; here for any Lie algebra ${\mathfrak h}$ we denote by ${\mathfrak c}({\mathfrak h})$ its centre,
\end{itemize}
\vspace{0.3cm}

\item [$k=4$:] 
\begin{itemize}
\item[(i)] $\Omega_4:=\left\{(x_1,x_2,x_3,x_4)\in\mathbb R^4:x_1>0,\,\,x_2>0,\,\,x_3>0,\,\,x_4>0\right\}$, 
\newline where the algebra ${\mathfrak g}(\Omega_4)$ consists of all diagonal matrices, hence we have $\dim {\mathfrak g}(\Omega_4)=4$,
\vspace{0.1cm}
\item[(ii)] $\Omega_5:= \Lambda_3 \times \mathbb R_+ = \big\{(x_1,x_2,x_3,x_4)\in\mathbb R^4: x_1^2-x_2^2-x_3^2>0,\,\,x_1>0, \\ 
\hspace*{7cm}x_4>0\big\}$, 
\newline where the algebra ${\mathfrak g}(\Omega_5)=\left({\mathfrak c}({\mathfrak{gl}}_3(\mathbb R))\oplus{\mathfrak{so}}_{1,2}\right)\oplus\mathbb R$ consists of block-diagonal matrices with blocks of sizes $3\times 3$ and $1\times 1$ corresponding to the two summands, hence $\dim {\mathfrak g}(\Omega_5)=5$,
\vspace{0.1cm}
\item[(iii)] $\Omega_6:= \Lambda_4 = \left\{(x_1,x_2,x_3,x_4)\in\mathbb R^4: x_1^2-x_2^2-x_3^2-x_4^2>0,\,\,x_1>0\right\}$, 
\newline where ${\mathfrak g}(\Omega_6)={\mathfrak c}({\mathfrak{gl}}_4(\mathbb R))\oplus{\mathfrak{so}}_{1,3}$, hence $\dim {\mathfrak g}(\Omega_6)=7$,
\end{itemize}
\vspace{0.3cm} 

\item [$k=5$:] 
\begin{itemize}
\item[(i)] $\Omega_7:=\big\{(x_1,x_2,x_3,x_4, x_5)\in\mathbb R^5:x_1>0,\,\,x_2>0,\,\,x_3>0,\,\,x_4>0, \\
\hspace*{7cm}x_5>0\big\}$, 
\newline where the algebra ${\mathfrak g}(\Omega_7)$ consists of all diagonal matrices, hence we have $\dim {\mathfrak g}(\Omega_7)=5$,
\vspace{0.1cm}
\item[(ii)] $\Omega_{8}:= \Lambda_3 \times \mathbb R^2_+ = \big\{(x_1,x_2,x_3,x_4, x_5)\in\mathbb R^5: x_1^2-x_2^2-x_3^2>0,\,\,x_1>0, \\
\hspace*{7cm} x_4>0,\,\,x_5>0\big\}$, 
\newline where the algebra ${\mathfrak g}(\Omega_{8})=\left({\mathfrak c}({\mathfrak{gl}}_3(\mathbb R))\oplus{\mathfrak{so}}_{1,2}\right)\oplus\mathbb R \oplus \mathbb R$ consists of block-diagonal matrices with blocks of sizes $3\times 3$, $1\times 1$ and $1 \times 1$ corresponding to the three summands, hence $\dim {\mathfrak g}(\Omega_{8})=6$,
\vspace{0.1cm}
\item[(iii)] $\Omega_{9}:= \Lambda_4 \times \mathbb R_+ = \big\{(x_1,x_2,x_3,x_4, x_5)\in\mathbb R^5: x_1^2-x_2^2-x_3^2-x_4^2>0,\,\,x_1>0,\\
\hspace*{7cm} x_5>0\big\}$, 
\newline where the algebra ${\mathfrak g}(\Omega_{9})=\left({\mathfrak c}({\mathfrak{gl}}_3(\mathbb R))\oplus{\mathfrak{so}}_{1,3}\right)\oplus\mathbb R$ consists of block-diagonal matrices with blocks of sizes $4\times 4$ and $1\times 1$ corresponding to the two summands, hence $\dim {\mathfrak g}(\Omega_{9})=8$,
\vspace{0.1cm}
\item[(iv)] $\Omega_{10}:= \Lambda_5 = \big\{(x_1,x_2,x_3,x_4, x_5)\in\mathbb R^5: x_1^2-x_2^2-x_3^2-x_4^2-x_5^2>0, \\
\hspace*{7cm} x_1>0\big\}$,
\newline where ${\mathfrak g}(\Omega_{10})={\mathfrak c}({\mathfrak{gl}}_4(\mathbb R))\oplus{\mathfrak{so}}_{1,4}$, hence $\dim {\mathfrak g}(\Omega_{10})=11$,
\vspace{0.1cm} 
\item[(v)] 
$\Omega_{11} := \big\{ (x_1, x_2, x_3, x_4, x_5) \in \mathbb R^5 : x_1 > 0,\,\,x_1x_2- x_4^2 > 0, \\
\hspace*{7cm} x_1x_2x_3 - x_3x_4^2 - x_2x_5^2 > 0 \big\},$ \\ 
where $\dim \mathfrak g(\Omega_{11}) = 5$, which is proved below,
\vspace{0.1cm} 
\item[(vi)] 
$\Omega_{12} := \big\{ (x_1, x_2, x_3, x_4, x_5) \in \mathbb R^5 : x_1 > 0,\,\,x_1x_2 - x_4^2 > 0, \\
\hspace*{7cm} x_1x_3 - x_5^2 >0 \big\},$ 
\newline where $\dim \mathfrak g(\Omega_{12}) = 5$, which is proved below. 
\end{itemize}

\end{itemize}

We conclude this section by providing a justification of the above claim that $\dim \mathfrak g(\Omega_{11}) = \dim \mathfrak g(\Omega_{12}) = 5$. In the case of the first ten homogeneous open convex cones in the list, each cone is either a positive orthant in some dimension, a Lorentz cone, or a product of such cones. In this situation, the Lie algebra of the automorphism group of each cone is straightforward to compute. In contrast, the cones $\Omega_{11}$ and $\Omega_{12}$ are considerably more complicated, and the task of merely determining their automorphism groups is rather involved. 

To begin with, the homogeneous open convex cone $\Omega_{11}$ is referred to in the literature as the \emph{dual Vinberg cone}, and the cone $\Omega_{12}$ as the \emph{Vinberg cone}. These two cones were first introduced in \cite{V2}, and as their names suggest are in fact dual to each other (see e.g. \cite[Example 4]{GI} for a proof that the cones are dual, or \cite[Theorem 3.3.1]{H} for an elementary proof). The automorphism group of a proper open convex cone is isomorphic to the automorphism group of its dual (see \cite[Proposition I.1.7]{FK}), and so the dimensions of their automorphism groups are equal. We now provide a brief sketch of how the automorphism group of the dual Vinberg cone $G(\Omega_{11})$ is determined (for details, see \cite[pp.~126--128]{IK}). 

Let $S^n(\mathbb R)$ denote the vector space of real symmetric $n \times n$ matrices, and let $S^n_+$ denote the subset of real positive definite symmetric $n \times n$ matrices. Consider the subspace $V$ of $S^3(\mathbb R)$, given by 
\[
V = \left\{ X= \left[\begin{array}{ccc} x_1 & x_4 & x_5 \\ x_4 & x_2 & 0 \\ x_5 & 0 & x_3  \end{array} \right] : x_1, x_2, x_3, x_4, x_5 \in \mathbb R \right\}, 
\]
and also the subset of $V$ given by those matrices in $V$ that are positive-definite, which we denote $\mathscr C$. That is, $\mathscr C = V \cap S^3_+ \subset S^3_+ \subset S^3(\mathbb R)$. 
For $X \in \mathscr C$, since positive definiteness of a symmetric matrix is equivalent to the positivity of the determinant of each principal submatrix, we see that the positive definiteness of $X$ necessitates that $x_1 > 0, x_1x_2-x_4^2 > 0,$ and $x_1x_2x_3 - x_3x_4^2 - x_2x_5^2 > 0$, and these conditions exactly describe the cone $\Omega_{11}$. So we see that the cone $\Omega_{11}$ is mapped bijectively onto $\mathscr{C}$ by the function $f: \Omega_{11} \to \mathscr C$, which is given by 
$$f(x_1, x_2, x_3, x_4, x_5) = \left[\begin{array}{ccc} x_1 & x_4 & x_5 \\ x_4 & x_2 & 0 \\ x_5 & 0 & x_3  \end{array} \right].$$  

Following \cite{IK}, we now define the Lie group 
\[ H := \left\{ A = \left[ \begin{array}{ccc} a & b & c \\ 0 & e & 0 \\ 0 & 0 & i \end{array} \right] : a, b, c, e, i \in \mathbb R \text{ with } a >0, ei \neq 0 \right\} \] 
and let $H^+$ be the subgroup consisting of matrices in $H$ with positive diagonal entries. Then $H^+$ is the connected identity component of $H$. Let $\rho: H \to \operatorname{GL}(V)$ be the representation of $H$ given by 
\[ \rho(A)X := AXA^T \] 
where $A \in H$ and $X \in V.$ It is straightforward to show that $\rho$ is a faithful representation. Further, $H$ and $H^+$ act transitively on the cone $\mathscr C \subset V$ by $\rho.$ By an intricate argument given in \cite[pp.~126--128]{IK}, the automorphism group of the cone $\mathscr C$, $G(\mathscr C),$ is given by 
\[ G(\mathscr C) = \rho(H^+) \rtimes G(\mathscr C)_{I_3} \] 
where $G(\mathscr C)_{I_3}$ is the isotropy subgroup of $I_3 \in \mathscr C$, which is finite. Since $H^+$ is five-dimensional, we see that $G(\mathscr C)$ is five-dimensional, and it is this result that is utilised in our classification. 

\section{Proof of Theorem 1} 

Let $M$ be a homogeneous Kobayashi-hyperbolic manifold of dimension $n$. By \cite{N}, the manifold $M$ is biholomorphic to an affinely homogeneous Siegel domain of the second kind $S(\Omega, H)$. Recall that 
$$
S(\Omega,H):=\left\{(z,w)\in\mathbb C^k\times\mathbb C^{n-k}: \operatorname{Im } z-H(w,w)\in \Omega\right\},
$$
for $1 \leq k \leq n$, where $\Omega \subset \mathbb R^k$ is an open convex cone and $H$ is an $\Omega$-Hermitian form on $\mathbb C^{n-k}$.
Since all homogeneous Kobayashi-hyperbolic manifolds of dimensions 2 and 3 have been classified (see \cite[Theorem~2.6]{Isa1}), we take $n \geq 4$. Further, we recall from the remarks after Definition \ref{Siegeldomain} in the previous section that if $k=1$ then $S(\Omega, H)$ is biholomorphic to $B^n$, so we assume that $k \geq 2.$ We can use the following lemma to rule out a large number of remaining possibilities. 

\begin{lemma} 
For $n \geq 6$ and $k \geq 4$, we cannot have $d(S(\Omega, H)) = n^2 - 7$. Also, for $n \geq 8$ and $k = 3$, we cannot have $d(S(\Omega, H)) = n^2 - 7$. 
\end{lemma}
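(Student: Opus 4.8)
The plan is to derive a contradiction directly from the dimension bound (\ref{estim4}), with no need for the explicit cone classification. This bound is available to us because $M$ is hyperbolic, so the cone $\Omega$ in its realisation $S(\Omega,H)$ is proper and Lemma \ref{gest} — together with the chain of estimates leading to (\ref{estim4}) — applies. The strategy is to show that (\ref{estim4}) already forces $d(S(\Omega,H))$ to be strictly smaller than $n^2-7$ on each of the two stated ranges, so that the equality $d(S(\Omega,H))=n^2-7$ cannot occur.

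First I would substitute the hypothetical value $d(S(\Omega,H))=n^2-7$ into (\ref{estim4}), cancel the common $n^2$ term, and clear the denominator of $2$. This reduces the whole problem to the single polynomial inequality
$$
g(k,n):=3k^2-(4n+5)k+8n+16\ge 0,
$$
which must hold whenever $d(S(\Omega,H))=n^2-7$. It therefore suffices to prove that $g(k,n)<0$ on the relevant sets of $(k,n)$, producing the desired contradiction.

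For the first assertion ($n\ge 6$, $k\ge 4$) I would use the constraint $k\le n$ coming from the definition of a Siegel domain, so that $k$ ranges over the interval $[4,n]$. As a function of $k$ the polynomial $g(\cdot,n)$ is convex (its leading coefficient $3$ is positive), hence attains its maximum on $[4,n]$ at one of the endpoints; it is therefore enough to check $g(4,n)=44-8n$ and $g(n,n)=-n^2+3n+16$, both of which are strictly negative exactly when $n\ge 6$. For the second assertion I would simply set $k=3$ and observe that $g(3,n)=28-4n<0$ precisely when $n\ge 8$. In every case $g(k,n)<0$ contradicts the displayed inequality, completing the proof.

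Since the argument is an elementary computation, there is no genuine obstacle; the only points requiring care are the algebraic reduction of (\ref{estim4}) to the inequality for $g$ and the remark that the admissible values of $k$ are bounded above by $n$, which is what legitimises the endpoint analysis of the convex parabola. It is worth noting that the thresholds in the statement are sharp for this method: $g(4,n)$ and $g(n,n)$ change sign at $n=6$, whereas $g(3,n)$ vanishes at $n=7$ and only turns negative at $n=8$, which is exactly why the case $k=3$ must be stated under the stronger hypothesis $n\ge 8$.
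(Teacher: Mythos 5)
Your proof is correct and is essentially the paper's own argument: both substitute $d(S(\Omega,H))=n^2-7$ into (\ref{estim4}) and reduce the lemma to the negativity of the same quadratic in $k$ (your $g(k,n)$ is exactly $2\varphi(k)$ in the paper's notation), using the bound $k\le n$. The only difference is presentational: the paper locates the roots $t_1<t_2$ of $\varphi$ via its discriminant and proves $t_1<4$, $t_2>n$ (resp.\ $t_1<3$), whereas you evaluate at the endpoints $k=4$ and $k=n$ (resp.\ $k=3$) and invoke convexity --- equivalent verifications, since the paper's simplified inequalities $n^2-3n-16>0$, $n>\frac{11}{2}$ and $n>7$ are precisely your conditions $g(n,n)<0$, $g(4,n)<0$ and $g(3,n)<0$.
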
 

\begin{proof} 
We will show that for $n \geq 6, k \geq 4$, as well as for $n \geq 8, k = 3$, the right-hand side of inequality (\ref{estim4}) given by 
$$ d(S(\Omega, H)) \leq \frac{3k^2}{2} - \left(2n + \frac{5}{2} \right) k + n^2 +4n + 1 $$
is strictly less that $n^2 - 7$. That is, for these $k, n$ the following holds: 
$$ \frac{3k^2}{2} - \left(2n + \frac{5}{2} \right) k  + 4n + 8 < 0. $$
To see this, consider the quadratic function 
$$ \varphi(t) := \frac{3t^2}{2} - \left(2n + \frac{5}{2} \right) t + 4n + 8. $$ 
The discriminant of $\varphi$ is given by 
$$ \mathcal{D} := 4n^2 - 14n - \frac{167}{4}, $$ 
which is positive for $n \geq 6$. The zeroes of $\varphi$ are given by 
$$t_1 := \frac{2n + \frac{5}{2} - \sqrt{\mathcal D}}{3},$$
$$t_2 := \frac{2n + \frac{5}{2} + \sqrt{\mathcal D}}{3}.$$ 

To prove the lemma, it suffices to show that: (i) $t_2 > n$ for $n \geq 6$, (ii) $t_1 < 4$ for $n \geq 6$, and (iii) $t_1 < 3$ for $n \geq 8$. Beginning with the inequality $t_2 > n$, we have 
$$n - \frac{5}{2} < \sqrt{\mathcal D},$$
or, equivalently, that 
$$n^2 - 3n - 16 > 0,$$ 
which clearly holds for $n \geq 6$. Now considering $t_1 < 4$, we see that 
$$2n - \frac{19}{2} < \sqrt{\mathcal D}, $$ 
or, equivalently, that 
$$n > \frac{11}{2}, $$ 
which holds for $n \geq 6.$ Lastly, the inequality $t_1 < 3$ implies that 
$$2n - \frac{13}{2} < \sqrt{\mathcal D}, $$
or, equivalently, that 
$$n > 7, $$ 
which holds for $n \geq 8$. This completes the proof.  
\end{proof} 

By the above lemma, we prove the theorem by considering the following eight cases: 
\begin{enumerate}
\item $k = 2, n \geq 4$ 
\item $k = 3, n = 4$
\item $k = 3, n = 5$ 
\item $k = 3, n = 6$ 
\item $k = 3, n = 7$
\item $k = 4, n = 4$ 
\item $k = 4, n = 5$ 
\item $k = 5, n = 5.$
\end{enumerate}
We now begin by considering each case.

\textbf{Case 1.} Suppose that $k = 2, n \geq 4.$  Since $H: \mathbb C^{n-k} \times \mathbb C^{n-k} \to \mathbb C^k$, we have that $H = (H_1, H_2)$ is a pair of Hermitian forms on $\mathbb C^{n-2}$. After a linear change of $z$-variables, we may assume that $H_1$ is positive-definite. Since this is the case, by applying a linear change of $w$-variables, we can simultaneously diagonalise $H_1$ and $H_2$ as 
$$H_1(w,w) = ||w||^2, \hspace{5mm} H_2(w,w) = \sum_{j=1}^{n-2} \lambda_j |w_j|^2. $$ 
If all the eigenvalues of $H_2$ are equal, $S(\Omega, H)$ is linearly equivalent either to 
$$D_1 := \left\{ (z, w) \in \mathbb C^2 \times \mathbb C^{n-2} : \text{Im } z_1 - ||w||^2 > 0, \text{ Im } z_2 > 0 \right\}$$ 
if $\lambda_j = 0,$ or to 
 $$D_2 := \left\{ (z, w) \in \mathbb C^2 \times \mathbb C^{n-2} : \text{Im } z_1 - ||w||^2 > 0, \text{ Im } z_2 - ||w||^2 > 0 \right\}$$ 
if $\lambda_j \neq 0.$ The domain $D_1$ is biholomorphic to $B^{n-1} \times B^1$, hence $d(D_1) = n^2 + 2 > n^2 - 7,$ which shows that $S(\Omega, H)$ cannot be equivalent to $D_1$. As for the domain $D_2$, consider the group $G(\Omega_1, (||w||^2, ||w||^2))$. A straightforward computation shows that $G(\Omega_1, (||w||^2, ||w||^2))$ consists of matrices $\left[\begin{array}{cc} \rho & 0 \\ 0 & \rho \end{array} \right]$ where $\rho >0$, and $\left[\begin{array}{cc} 0 & \eta \\ \eta & 0 \end{array} \right]$ where $\eta >0.$ It is therefore seen that the action of $G(\Omega_1, (||w||^2, ||w||^2))$ is not transitive on $\Omega_1$. Therefore, $S(\Omega, H)$ cannot be equivalent to $D_2$ either. It follows that $H_2$ has at least one pair of distinct eigenvalues. 

Next, since $\dim \mathfrak{g}(\Omega) = 2$, inequality (\ref{estim2}) yields 
\begin{equation} 
s \geq n^2 - 4n - 5. \label{eq1} 
\end{equation} 
On the other hand, by inequality (\ref{ests}),
\begin{equation} 
s \leq n^2 - 4n + 4.  \label{eq2} 
\end{equation} 
The exact value of $s$ is given by  
$$s = n^2 - 4n + 4 - 2m, $$
where $m \geq 1$ is the number of pairs of distinct eigenvalues of $H_2$. This fact is a consequence of the following lemma.

\begin{lemma}\label{dimsubspaceskewherm} \it Let ${\mathcal H}$ be a Hermitian matrix of size $r\times r$ and ${\mathcal K}$ the real vector space of skew-Hermitian matrices of size $r\times r$ that are at the same time skew-Hermitian with respect to ${\mathcal H}$:
\[
{\mathcal K}:=\left\{B\in{\mathfrak{gl}}_r(\mathbb C): B + B^*=0,\,\,{\mathcal H}B+B^*{\mathcal H}=0\right\}.
\]
Then $\dim{\mathcal K}=r^2-2p$, where $p$ is the number of unordered pairs of distinct eigenvalues of ${\mathcal H}$, counted with multiplicity. Hence, if $\dim{\mathcal K}=r^2$, then ${\mathcal H}$ is a scalar matrix. 
\end{lemma}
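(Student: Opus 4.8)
The plan is to collapse the two defining conditions of $\mathcal{K}$ into a single transparent one, then diagonalise $\mathcal{H}$ and count directly. First I would observe that for any $B\in\mathcal{K}$ the skew-Hermiticity $B^*=-B$ turns the second condition $\mathcal{H}B+B^*\mathcal{H}=0$ into $\mathcal{H}B-B\mathcal{H}=0$. Thus $\mathcal{K}$ is precisely the real vector space of skew-Hermitian matrices that commute with $\mathcal{H}$.

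Next I would reduce to the case of diagonal $\mathcal{H}$. Since $\mathcal{H}$ is Hermitian, there is a unitary $U$ with $U\mathcal{H}U^*$ diagonal. The map $B\mapsto UBU^*$ is a real-linear automorphism of $\mathfrak{gl}_r(\mathbb{C})$ that preserves skew-Hermiticity and sends the commutant of $\mathcal{H}$ onto the commutant of $U\mathcal{H}U^*$; hence it carries $\mathcal{K}$ isomorphically onto the analogous space built from $U\mathcal{H}U^*$. As both $\dim\mathcal{K}$ and the spectrum of $\mathcal{H}$ (and therefore the quantity $p$) are unchanged, I may assume $\mathcal{H}=\operatorname{diag}(\mu_1,\dots,\mu_r)$.

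Then I would perform the count. Writing the distinct eigenvalues as $\nu_1,\dots,\nu_d$ with multiplicities $m_1,\dots,m_d$ (so $m_1+\cdots+m_d=r$), a matrix commutes with the diagonal $\mathcal{H}$ if and only if its $(i,i')$ entry vanishes whenever $\mu_i\neq\mu_{i'}$, i.e. if and only if it is block diagonal with respect to the eigenspace decomposition, with blocks of sizes $m_1,\dots,m_d$. Imposing $B^*=-B$ forces each block to be skew-Hermitian, and the constraint coupling the $(i,i')$ and $(i',i)$ entries never leaves a single block; since the real dimension of the space of skew-Hermitian $m\times m$ matrices is $m^2$, this gives $\dim\mathcal{K}=\sum_{j=1}^d m_j^2$.

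Finally I would match this with the claimed formula. Reading $p$ as the number of unordered pairs of index positions whose eigenvalues differ -- which is what ``counted with multiplicity'' means -- yields $p=\sum_{j<l}m_j m_l$, whence $r^2=\left(\sum_j m_j\right)^2=\sum_j m_j^2+2\sum_{j<l}m_j m_l=\dim\mathcal{K}+2p$, and so $\dim\mathcal{K}=r^2-2p$. The last assertion is then immediate: $\dim\mathcal{K}=r^2$ forces $p=0$, hence $\mathcal{H}$ has a single eigenvalue and is scalar. The only genuine subtlety here is pinning down this interpretation of $p$ so that the identity $r^2=\sum_j m_j^2+2p$ holds exactly; the reduction step also requires checking that unitary conjugation preserves both defining conditions of $\mathcal{K}$, but this is routine.
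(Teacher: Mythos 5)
Your proposal is correct and follows essentially the same route as the paper: both identify $\mathcal{K}$ as the centraliser of $\mathcal{H}$ inside the skew-Hermitian matrices $\mathfrak{u}(r)$, decompose by the eigenspaces of $\mathcal{H}$ to get blocks of sizes equal to the eigenvalue multiplicities, and conclude via $\sum_j m_j^2 = \bigl(\sum_j m_j\bigr)^2 - 2\sum_{j<l} m_j m_l = r^2 - 2p$. The only cosmetic difference is that you diagonalise $\mathcal{H}$ by an explicit unitary conjugation before counting, whereas the paper argues directly with the invariant eigenspace decomposition; the substance is identical.
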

\begin{proof} 
Note first that $\mathcal K$ is the centraliser of $\mathcal H$ in $\mathfrak u(r)$. That is, 
\[ \mathcal K = \left\{ B \in \mathfrak u(r) : \mathcal H B - B \mathcal H = 0 \right\}. \] 
Since $B \in \mathcal K$ commutes with $\mathcal H$, it preserves each eigenspace of $\mathcal H$, so 
\[ \mathcal K \cong \mathfrak u(r_1) \oplus \cdots \oplus \mathfrak u(r_k), \] 
where $r_1, \ldots, r_k$ are the dimensions of the eigenspaces of $\mathcal H$, and $k$ is the number of distinct eigenvalues of $\mathcal H.$ Hence, 
\begin{equation} 
\begin{split} 
\dim \mathcal K &= r_1^2 + \cdots + r_k^2 \\ 
&= (r_1 + \cdots + r_k)^2 - 2 \sum_{i < j} r_ir_j \\ 
&= r^2-2p, \nonumber 
\end{split} 
\end{equation} 
which completes the proof. 
\end{proof}

By (\ref{eq1}) and (\ref{eq2}) above, we see that we must have $1 \leq m \leq 4$, which leads to the following possibilities: 
\begin{enumerate}
\item $n = 4$ and $\lambda_1 \neq \lambda_2$ (here $m=1$ and $s=2$), 
\item $n=5$ and, upon permutation of $w$-variables, $\lambda_1 \neq \lambda_2 = \lambda_3$ (here $m=2$ and $s=5$), 
\item $n=5$ and $\lambda_1, \lambda_2, \lambda_3$ are pairwise distinct (here $m=3$ and $s=3$), 
\item $n=6$ and, upon permutation of $w$-variables, $\lambda_1 \neq \lambda_2 = \lambda_3 = \lambda_4$ (here $m=3$ and $s=10$), 
\item $n=6$ and, upon permutation of $w$-variables, $\lambda_1 = \lambda_2 \neq \lambda_3 = \lambda_4$ (here $m=4$ and $s=8$), or
\item $n=7$ and, upon permutation of $w$-variables, $\lambda_1 \neq \lambda_2 = \lambda_3 = \lambda_4 = \lambda_5$ (here $m=4$ and $s=17$). 
\end{enumerate}

We know from the discussion after Definition \ref{Siegeldomain} in the previous section that in this case (when $k = 2, n \geq 4$) $S(\Omega, H)$ is biholomorphic to a product of two unit balls $B^l \times B^{n-l}$ for $1 \leq l \leq n-1$. The dimension of its automorphism group is given by 
$$d(B^l \times B^{n-l}) = 2l^2 - 2nl + n^2 + 2n.$$ 
Since $n$ is limited to the range $4,5,6,7,$ we set the right-hand side equal to $n^2 - 7$ and solve for $l$ in each case. For none of the above values of $n$ is $l$ integer-valued, and therefore this case makes no contributions to our classification. 
\vspace{2mm} 

\textbf{Case 2.} 
Suppose that $k = 3, n = 4$. Then $S(\Omega, H)$ is linearly equivalent to either 
$$D_3 := \left\{ (z,w) \in \mathbb C^3 \times \mathbb C : \text{Im } z - v|w|^2 \in \Omega_2 \right\},$$ 
where $v = (v_1, v_2, v_3)$ is a vector in $\mathbb R^3$ with non-negative entries, or 
$$D_4 := \left\{ (z,w) \in \mathbb C^3 \times \mathbb C : \text{Im } z - v|w|^2 \in \Omega_3 \right\},$$ 
where $v = (v_1, v_2, v_3)$ is a vector in $\mathbb R^3$ satisfying $v_1^2 \geq v_2^2 + v_3^2,$ $v_1 > 0.$ Let us consider each of these cases separately. 

Assume that $S(\Omega, H)$ is equivalent to the domain $D_3$. Since $\Omega_2$ is equivalent to the positive orthant in $\mathbb R^3$ then $S(\Omega, H)$ must be biholomorphic to a four-dimensional product of three unit balls, and it is immediate to see that the only possibility is $B^1 \times B^1 \times B^2$. Since $d(B^1 \times B^1 \times B^2) = 3+3+8 = 14 > 9 = n^2-7$, clearly we can rule out this possibility. Therefore, $S(\Omega, H)$ must be equivalent to the domain $D_4$. 

Suppose first that $v_1^2 > v_2^2 + v_3^2$, i.e., that $v \in \Omega_3$. Since the vector $v$ is an eigenvector of every element of $G(\Omega_3, v|w|^2)$, we see that $G(\Omega_3, v|w|^2)$ does not act transitively on $\Omega_3$. Therefore, we have $v_1 = \sqrt{v_2^2 +v_3^2} > 0$, i.e., that $v \in \partial \Omega_3 \setminus \left\{ 0 \right\}$. Since the group $G(\Omega_3)^\circ = \mathbb R_+ \times \operatorname{SO}^\circ_{1,2}$ acts transitively on $\partial \Omega_3 \setminus \left\{ 0 \right\}$, we may suppose that $v = (1,1,0.)$ 

\begin{lemma} \label{dimh} 
For the Hermitian form $\mathcal H(w, w') := (\overline{w}w', \overline{w}w', 0),$ we have
$$\dim G(\Omega_3, \mathcal H) = 3.$$ 
\end{lemma}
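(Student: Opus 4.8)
The plan is to reduce the defining condition of $G(\Omega_3,\mathcal H)$ to a single linear condition on the distinguished vector $v=(1,1,0)$ and then to count dimensions at the level of Lie algebras. Here $k=3$ and $n-k=1$, so the associated matrix $B$ in condition (\ref{assoc1}) is a scalar $\beta\in\mathbb C$, and since $\mathcal H(w,w')=\overline w\,w'\cdot v$ with $v=(1,1,0)$, condition (\ref{assoc1}) becomes $\overline w\,w'\,(Av)=(\overline\beta+\beta)\,\overline w\,w'\,v$, i.e. $Av=2\operatorname{Re}(\beta)\,v$. By the remark following Theorem \ref{kmoalgebradescr}, the Lie algebra of $G(\Omega_3,\mathcal H)$ consists precisely of those $A\in{\mathfrak g}(\Omega_3)$ admitting an associated $B$, and since $2\operatorname{Re}(\beta)$ ranges over all of $\mathbb R$, this Lie algebra is exactly $\{A\in{\mathfrak g}(\Omega_3):Av\in\mathbb R v\}$. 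It therefore suffices to compute the dimension of this subalgebra, and then $\dim G(\Omega_3,\mathcal H)$ equals that dimension.

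Next I would exploit the known structure ${\mathfrak g}(\Omega_3)={\mathfrak c}({\mathfrak{gl}}_3(\mathbb R))\oplus{\mathfrak{so}}_{1,2}=\mathbb R I\oplus{\mathfrak{so}}_{1,2}$, recorded in the list of cones. The central summand always satisfies the constraint, since $Iv=v\in\mathbb R v$, contributing one dimension. For the semisimple summand I would write ${\mathfrak{so}}_{1,2}$ relative to $\eta=\operatorname{diag}(1,-1,-1)$ as $X=\eta Y$ with $Y$ skew-symmetric, so that $X$ is a symmetric-in-the-appropriate-sense matrix depending on three real parameters. A short direct evaluation of $Xv$ then shows that the condition $Xv\in\mathbb R v$ cuts out a two-dimensional subspace of ${\mathfrak{so}}_{1,2}$ — this is the parabolic (Borel) subalgebra stabilising the null line $\mathbb R v$. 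Since $\mathbb R I\cap{\mathfrak{so}}_{1,2}=0$ (elements of ${\mathfrak{so}}_{1,2}$ are trace-free while $I$ is not), the two contributions are independent, giving $\dim\{A\in{\mathfrak g}(\Omega_3):Av\in\mathbb R v\}=1+2=3$, and hence $\dim G(\Omega_3,\mathcal H)=3$.

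The main obstacle is the middle step: correctly establishing that the stabiliser of the line $\mathbb R v$ inside ${\mathfrak{so}}_{1,2}$ is two-dimensional rather than one-dimensional. This is exactly where the choice $v=(1,1,0)\in\partial\Omega_3$ is essential. For any $X\in{\mathfrak{so}}_{1,2}$ one has $\langle Xv,v\rangle_\eta=0$, so $Xv=\mu v$ forces $\mu\langle v,v\rangle_\eta=0$; were $v$ timelike the eigenvalue $\mu$ would vanish and the stabiliser would reduce to the one-dimensional rotation subalgebra ${\mathfrak{so}}_2$, whereas the null condition $\langle v,v\rangle_\eta=0$ permits nonzero $\mu$ and yields the larger, two-dimensional parabolic subalgebra. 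The calculation itself is brief, but this geometric distinction between the lightlike and timelike cases is the crux of the argument.
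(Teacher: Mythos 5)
Your proposal is correct and takes essentially the same route as the paper: both reduce the problem to computing the subalgebra $\{A\in\mathfrak g(\Omega_3):Av\in\mathbb R v\}$ for $v=(1,1,0)$ and find it to be three-dimensional inside $\mathfrak g(\Omega_3)=\mathfrak c(\mathfrak{gl}_3(\mathbb R))\oplus\mathfrak{so}_{1,2}$. The differences are only presentational: the paper imposes the eigenvector condition on the explicit $4$-parameter matrix form of $\mathfrak g(\Omega_3)$ (forcing its parameters $q$ and $r$ to coincide), whereas you split off the centre and identify the null-line stabiliser in $\mathfrak{so}_{1,2}$ as a two-dimensional Borel subalgebra; your explicit derivation of the eigenvector condition from (\ref{assoc1}) via the scalar $B=\beta$ fills in a step the paper asserts without proof.
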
 

\begin{proof} 
A straightforward computation of the Lie algebra of $G(\Omega_3, H)$ will prove the lemma. We momentarily denote this Lie algebra by $\mathfrak h$, and note that $\mathfrak h$ consists of all elements of $\mathfrak g(\Omega_3)$ having $(1,1,0)$ as an eigenvector. The Lie algebra $\mathfrak{g}(\Omega_3)$ is given by 
\begin{equation} \label{lieomegathree}
\mathfrak g (\Omega_3) = \mathfrak c(\mathfrak{gl}_3 (\mathbb R)) \oplus \mathfrak o_{1,2} = \left\{ \left[\begin{array}{ccc} \lambda & p & q \\ p & \lambda & r \\ q & -r & \lambda \end{array} \right]: \text{  } \lambda, p, q, r \in \mathbb R \right\}. 
\end{equation} 
Therefore, it follows that 
$$\mathfrak h = \left\{ \left[\begin{array}{ccc} \lambda & p & q \\ p & \lambda & q \\ q & -q & \lambda \end{array} \right]: \text{  } \lambda, p, q \in \mathbb R \right\}, $$
and we see that $\dim \mathfrak h = 3$ as required. 
\end{proof} 

By the above lemma we see that for $\mathfrak g = \mathfrak g(D_4)$ we have $\dim \mathfrak g_0 = 4$ (recall that $s = 1$). We also know (see \cite[Lemma 3.8 and Proposition A.3]{Isa1}) that for $\mathfrak g = \mathfrak g(D_4)$, if $v \in \partial \Omega_3 \setminus \left\{0\right\}$ we have $\mathfrak g_{1/2} = 0$ and $\dim \mathfrak g_1 = 1$. So we have 
$$d(D_4) = \dim \mathfrak g_{-1} + \dim \mathfrak g_{-1/2} + \dim \mathfrak g_0 + \dim \mathfrak g_1 = 10.$$ 
Since $d(D_4) = 10 > 9 = n^2 - 7,$ we see that $S(\Omega, H)$ is not equivalent to $D_4$, and so Case 2 contributes nothing to our classification. 
\vspace{2mm} 

\textbf{Case 3.} 
Suppose that $k = 3, n = 5$. Here, $S(\Omega, H)$ is linearly equivalent either to 
$$D_5 := \left\{ (z, w) \in \mathbb C^3 \times \mathbb C^2: \text{Im } z - \mathcal H(w,w) \in \Omega_2 \right\}, $$ 
where $\mathcal H $ is an $\Omega_2$-Hermitian form, or to 
$$D_6 := \left\{ (z, w) \in \mathbb C^3 \times \mathbb C^2: \text{Im } z - \mathcal H(w,w) \in \Omega_3 \right\}, $$ 
where $\mathcal H$ is an $\Omega_3$-Hermitian form. 

Assume $S(\Omega, H)$ is equivalent to the domain $D_5$. Then $S(\Omega, H)$ must be equivalent to either $B^1 \times B^1 \times B^3$ or $B^1 \times B^2 \times B^2$. Since $d(B^1 \times B^1 \times B^3) = 21$ and $d(B^1 \times B^2 \times B^2) = 19$, and neither of these is equal to $18 = n^2-7$, we see that consideration of the domain $D_5$ does not aid our classification.  

Suppose then that $S(\Omega, H)$ is equivalent to the domain $D_6$. With the use of Lemma \ref{dimsubspaceskewherm}, we now show that we must have either $s=1$, $s=2$ or $s=4$. Recall that $s := \dim \mathcal L$, where 
\[ \mathcal L = \left\{B \in \mathfrak{gl}_{n-k} (\mathbb C) : H(B \cdot , \cdot) + H(\cdot , B \cdot) =0 \right\}, \] 
with $H: \mathbb C^{n-k} \times \mathbb C^{n-k} \to \mathbb C^k$. Here, $\mathcal L$ consists of matrices $B \in \mathfrak{gl}_2(\mathbb C)$ such that 
\begin{align} 
H_1(B \cdot , \cdot) + H_1(\cdot , B \cdot) &=0, \nonumber \\
H_2(B \cdot , \cdot) + H_2(\cdot , B \cdot) &=0, \nonumber \\
H_3(B \cdot , \cdot) + H_3(\cdot , B \cdot) &=0. \nonumber 
\end{align} 
Writing the above relations in matrix form, and noting that since $H(w,w) \in \bar{\Omega}_3 \setminus \left\{0 \right\}$ for all non-zero $w \in \mathbb C^2$ we may assume that $H_1 = I$, we have 
\begin{align} 
B+B^*=0, \nonumber \\
H_2B+B^*H_2=0, \nonumber \\
H_3B+B^*H_3=0. \nonumber 
\end{align} 
Now consider the two vector spaces given by 
\begin{align} 
\mathcal K_2 &=\left\{B\in{\mathfrak{gl}}_2(\mathbb C): B + B^*=0,\,\,H_2B+B^*H_2=0\right\} \nonumber \\
\mathcal K_3 &=\left\{B\in{\mathfrak{gl}}_2(\mathbb C): B + B^*=0,\,\,H_3B+B^*H_3=0\right\}. \nonumber 
\end{align}
Then by Lemma \ref{dimsubspaceskewherm} we have either $\dim \mathcal K_2=2$ or $\dim \mathcal K_2=4,$ and similarly for the vector space $\mathcal K_3$. By noting that $s = \dim (\mathcal K_2 \cap \mathcal K_3)$, we have either $s=1, s=2$ or $s=4.$ Finally, the possibility of $s=0$ is excluded by observing that $i I \in \mathcal K_2 \cap K_3.$

In \cite{Isa3}, each of these scenarios was dealt with in Sections 5, 4 and 3 respectively. When $s=4$ we have $d(D_6) = 15 < 18 = n^2-7$, and when $s=2$ the action of $G(\Omega_3, H)$ on $\Omega_3$ is not transitive. When $s=1$, we see that $d(D_6) \leq 17 < 18 = n^2-7$, and so in none of these instances is any contribution made to our classification. 
\vspace{2mm} 

\textbf{Case 4.} 
Suppose that $k = 3, n = 6$. Here, $S(\Omega, H)$ is linearly equivalent either to 
$$D_7 := \left\{ (z, w) \in \mathbb C^3 \times \mathbb C^3: \text{Im } z - \mathcal H(w,w) \in \Omega_2 \right\}, $$ 
where $\mathcal H $ is an $\Omega_2$-Hermitian form, or to 
$$D_8 := \left\{ (z, w) \in \mathbb C^3 \times \mathbb C^3: \text{Im } z - \mathcal H(w,w) \in \Omega_3 \right\}, $$ 
where $\mathcal H$ is an $\Omega_3$-Hermitian form. 

Assume $S(\Omega, H)$ is equivalent to $D_7$. Then as in the previous two cases, $S(\Omega, H)$ must be biholomorphic to a product of three unit balls. The only possibilities are $B^1 \times B^1 \times B^4$, $B^1 \times B^2 \times B^3$ or $B^2 \times B^2 \times B^2$, none of which have automorphism group of dimension $n^2 - 7 = 29$. 

So $S(\Omega, H)$ must be equivalent to $D_8$. By (\ref{estim2}) we have $s + \dim \mathfrak{g}(\Omega) \geq 11$. Since $\dim \mathfrak{g}(\Omega_3) = 4$, we see that $s \ge 7$. On the other hand, by (\ref{ests}) we have $s \leq 9$. We show now that $s$ cannot equal $7$ or $8$, and so we must have $s=9$. By a similar argument to that in the previous case, the two vector spaces $\mathcal K_2$ and $\mathcal K_3$ are in this instance given by 
\begin{align} 
\mathcal K_2 &=\left\{B\in{\mathfrak{gl}}_3(\mathbb C): B + B^*=0,\,\,H_2B+B^*H_2=0\right\} \nonumber \\
\mathcal K_3 &=\left\{B\in{\mathfrak{gl}}_3(\mathbb C): B + B^*=0,\,\,H_3B+B^*H_3=0\right\}. \nonumber 
\end{align}
By Lemma \ref{dimsubspaceskewherm} we have that $\dim \mathcal K_2 = 3, 5$ or $9$ (recall that pairs of distinct eigenvalues are counted with multiplicity). Similarly, $\dim \mathcal K_3 = 3, 5$ or $9$. Noting that $s= \dim (\mathcal K_2 \cap \mathcal K_3)$, clearly $s = 1, 2, 3, 5$ or $9$. 

Let $\mathcal H = (\mathcal H_1, \mathcal H_2, \mathcal H_3)$ and {\bf H} be a positive-definite linear combination of $\mathcal H_1, \mathcal H_2, \mathcal H_3$. After a linear change of $w$-variables, we can diagonalise {\bf H} as ${\bf H} (w,w) = ||w||^2$. Since $s=9$, by Lemma \ref{dimsubspaceskewherm} each of the $\mathbb C$-valued Hermitian forms $\mathcal H_1, \mathcal H_2, \mathcal H_3$ is proportional to ${\bf H}$. Thus we have $\mathcal H (w,w) = v||w||^2$, where $v = (v_1, v_2, v_3)$ is a vector in $\mathbb R^3$ satisfying $v_1^2 \geq v_2^2 + v_3^2$, $v_1 > 0$. 

Suppose first that $v_1^2 > v_2^2 + v_3^2$, i.e., that $v \in \Omega_3$. Since the vector $v$ is an eigenvector of every element of $G(\Omega_3, v|w|^2)$, we see that $G(\Omega_3, v|w|^2)$ does not act transitively on $\Omega_3$. Therefore, we have $v_1 = \sqrt{v_2^2 +v_3^2} > 0$, i.e., that $v \in \partial \Omega_3 \setminus \left\{ 0 \right\}$. As the group $G(\Omega_3)^\circ = \mathbb R_+ \times \text{SO}^{\circ}_{1,2}$ acts transitively on $\partial \Omega_3 \setminus \left\{ 0 \right\}$, we can suppose that $v=(1,1,0)$, so $\mathcal H (w,w) = (||w||^2, ||w||^2, 0)$. Here, the domain $D_8$ coincides with the domain $\widetilde{D}_6$ with $N = 3$ (see \cite[Lemma 3.4]{Isa3}). Therefore, we see that for $\mathfrak{g} = \mathfrak{g}(D_8)$ we have $\mathfrak{g}_{1/2} = 0$, and by \cite[Lemma 3.5]{Isa3} we see that for $\mathfrak{g} = \mathfrak{g}(D_8)$ we have $\dim \mathfrak{g}_{1} = 1.$ Furthermore, for $w \in \mathbb C^3$, the proof of Lemma \ref{dimh} gives us 
$$\dim G(\Omega_3, (||w||^2, ||w||^2, 0)) = 3,$$
and we see that $\dim \mathfrak g_0 = 12$ (since $s=9$). Therefore, we have 
$$d(D_8) = \dim \mathfrak{g}_{-1} + \dim \mathfrak{g}_{-1/2} + \dim \mathfrak{g}_{0} + \dim \mathfrak{g}_{1/2} + \dim \mathfrak{g}_{1} = 22 < 29 = n^2 - 7.$$ 
This shows that $S(\Omega, H)$ cannot be equivalent to $D_8$, so Case 4 contributes nothing to our classification. 
\vspace{2mm} 

\textbf{Case 5.} 
Suppose that $k=3, n=7$. Here, $S(\Omega, H)$ is linearly equivalent either to 
$$D_9 := \left\{ (z, w) \in \mathbb C^3 \times \mathbb C^4: \text{Im } z - \mathcal H(w,w) \in \Omega_2 \right\}, $$ 
where $\mathcal H $ is an $\Omega_2$-Hermitian form, or to 
$$D_{10} := \left\{ (z, w) \in \mathbb C^3 \times \mathbb C^4: \text{Im } z - \mathcal H(w,w) \in \Omega_3 \right\}, $$ 
where $\mathcal H$ is an $\Omega_3$-Hermitian form. 

By (\ref{estim2}) we have $s+ \dim \mathfrak g(\Omega) \ge 20$. On the other hand, $s \le 16$ by (\ref{ests}). Since $\dim \mathfrak g (\Omega_2)=3$ and $\dim \mathfrak g (\Omega_3)=4$, it follows that $\Omega$ is linearly equivalent to $\Omega_3$ and $s=16.$ In particular, $S(\Omega, H)$ can only be linearly equivalent to the domain $D_{10}$. 

We proceed in the same manner as the previous case. Let $\mathcal H = (\mathcal H_1, \mathcal H_2, \mathcal H_3)$ and {\bf H} be a positive-definite linear combination of $\mathcal H_1, \mathcal H_2, \mathcal H_3$. After a linear change of $w$-variables, we can diagonalise {\bf H} as ${\bf H} (w,w) = ||w||^2$. Since $s=16$, by Lemma \ref{dimsubspaceskewherm} each of the $\mathbb C$-valued Hermitian forms $\mathcal H_1, \mathcal H_2, \mathcal H_3$ is proportional to ${\bf H}$. Thus we have $\mathcal H (w,w) = v||w||^2$, where $v = (v_1, v_2, v_3)$ is a vector in $\mathbb R^3$ satisfying $v_1^2 \geq v_2^2 + v_3^2$, $v_1 > 0$. 

We will suppose first that $v_1^2 > v_2^2 + v_3^2$, i.e., that $v \in \Omega_3$. Since the vector $v$ is an eigenvector of every element of $G(\Omega_3, v|w|^2)$, we see that $G(\Omega_3, v|w|^2)$ does not act transitively on $\Omega_3$. Therefore, we have $v_1 = \sqrt{v_2^2 +v_3^2} > 0$, i.e., that $v \in \partial \Omega_3 \setminus \left\{ 0 \right\}$. As the group $G(\Omega_3)^\circ = \mathbb R_+ \times \text{SO}^{\circ}_{1,2}$ acts transitively on $\partial \Omega_3 \setminus \left\{ 0 \right\}$, we can suppose that $v=(1,1,0)$, so $\mathcal H (w,w) = (||w||^2, ||w||^2, 0)$. In this case, the domain $D_{10}$ coincides with the domain $\widetilde{D}_6$ with $N = 4$ (see \cite[Lemma 3.4]{Isa3}). As in the previous case, we see that for $\mathfrak{g} = \mathfrak{g}(D_{10})$ we have $\mathfrak{g}_{1/2} = 0$ and $\dim \mathfrak{g}_1 = 1.$ Furthermore, for $w \in \mathbb C^4$, the proof of Lemma \ref{dimh} gives us 
$$\dim G(\Omega_3, (||w||^2, ||w||^2, 0)) = 3,$$
and we see that $\dim \mathfrak g_0 = 19$ (since $s=16$). Therefore, we have
$$d(D_8) = \dim \mathfrak{g}_{-1} + \dim \mathfrak{g}_{-1/2} + \dim \mathfrak{g}_{0} + \dim \mathfrak{g}_{1/2} + \dim \mathfrak{g}_{1} = 31 < 42 = n^2 - 7.$$ 
This shows that $S(\Omega, H)$ cannot be equivalent to $D_{10}$, so Case 5 makes no contributions to the classification.
\vspace{2mm} 

\textbf{Case 6.} 
Suppose that $k=4, n=4$. In this case, after a linear change of variables, $S(\Omega, H)$ is one of the domains 
$$\left\{ z \in \mathbb C^4 : \text{Im } z \in \Omega_4 \right\},$$ 
$$\left\{ z \in \mathbb C^4 : \text{Im } z \in \Omega_5 \right\},$$ 
$$\left\{ z \in \mathbb C^4 : \text{Im } z \in \Omega_6 \right\},$$ 
and therefore is biholomorphic either to $B^1 \times B^1 \times B^1 \times B^1$, or to $B^1 \times T_3$, where $T_3$ is the domain 
$$T_3 = \left\{(z_1, z_2, z_3) \in \mathbb C^3 : (\operatorname{Im }z_1)^2 -(\operatorname{Im }z_2)^2 - (\operatorname{Im }z_3)^2 >0, \hspace{2mm} \operatorname{Im }z_1 >0 \right\},$$ 
or to $T_4$, where $T_4$ is the domain 
\begin{align}
T_4 = \big\{(z_1, z_2, z_3, z_4) \in \mathbb C^4 : (\operatorname{Im }z_1)^2 -(\operatorname{Im }&z_2)^2 - (\operatorname{Im }z_3)^2 - (\operatorname{Im }z_4)^2>0, \nonumber \\ 
&\operatorname{Im }z_1 >0 \big\}. \nonumber 
\end{align} 
The dimensions of the respective automorphism groups of these domains are 12, 13 and 15. Each of these numbers is greater than $9 = n^2 - 7$, and so we see that Case 6 contributes nothing to our classification. 
\vspace{2mm} 

\textbf{Case 7.} 
Suppose that $k=4, n=5$. Then $S(\Omega, H)$ is linearly equivalent to either 
$$D_{11} := \left\{ (z, w) \in \mathbb C^4 \times \mathbb C: \text{Im } z - v|w|^2 \in \Omega_4 \right\}, $$ 
where $v = (v_1, v_2, v_3, v_4)$ is a vector in $\mathbb R^4$ with non-negative entries, or 
$$D_{12} := \left\{ (z, w) \in \mathbb C^4 \times \mathbb C: \text{Im } z - v|w|^2 \in \Omega_5 \right\}, $$ 
where $v = (v_1, v_2, v_3, v_4)$ is a vector in $\mathbb R^4$ satisfying $v \in \bar{\Omega}_5 \setminus \left\{ 0 \right\}$, or 
$$D_{13} := \left\{ (z, w) \in \mathbb C^4 \times \mathbb C: \text{Im } z - v|w|^2 \in \Omega_6 \right\}, $$ 
where $v = (v_1, v_2, v_3, v_4)$ is a vector in $\mathbb R^4$ satisfying $v \in \bar{\Omega}_6 \setminus \left\{ 0 \right\}$, i.e., $v_1^2 \geq v_2^2 + v_3^2 + v_4^2, v_1 > 0.$ 

Since $s=1$, by inequality (\ref{estim2}) we see that $\dim \mathfrak{g}(\Omega) \geq 5$. Therefore $S(\Omega, H)$ can only be linearly equivalent to either $D_{12}$ or $D_{13}$. Let us begin with the second possibility. If $S(\Omega, H)$ is equivalent to $D_{13}$, then assume firstly that $v_1^2 > v_2^2 + v_3^2 + v_4^2$, i.e., that $v \in \Omega_6$. Since the vector $v$ is an eigenvector of every element of $G(\Omega_6, v|w|^2)$, we see that $G(\Omega_6, v|w|^2)$ does not act transitively on $\Omega_6$. Therefore, we have $v_1 = \sqrt{v_2^2 +v_3^2 +v_4^2} > 0$, i.e., that $v \in \partial \Omega_6 \setminus \left\{ 0 \right\}$. As the group $G(\Omega_6)^\circ = \mathbb R_+ \times \text{SO}^{\circ}_{1,3}$ acts transitively on $\partial \Omega_6 \setminus \left\{ 0 \right\}$, we can suppose that $v=(1,1,0,0)$, so $v|w|^2 = (|w|^2, |w|^2, 0, 0)$. 

\begin{lemma} 
For the Hermitian form $\mathcal H(w, w') := (\overline{w}w', \overline{w}w', 0, 0),$ we have
$$\dim G(\Omega_6, \mathcal H) = 5.$$ 
\end{lemma}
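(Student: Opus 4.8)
The plan is to compute the Lie algebra $\mathfrak h$ of $G(\Omega_6, \mathcal H)$ directly, following the strategy used in the proof of Lemma \ref{dimh}. The crucial simplification is that here $n - k = 1$, so $\mathbb C^{n-k}$ is one-dimensional and every matrix $B$ associated to $A \in \mathfrak g(\Omega_6)$ via (\ref{assoc1}) is a scalar $\beta \in \mathbb C$. Writing the single $\mathbb C^4$-valued Hermitian form as $\mathcal H(w, w') = \bar w w' \cdot v$ with $v = (1,1,0,0)$, a short computation shows that $H(Bw,w') + H(w,Bw') = 2\operatorname{Re}(\beta)\,\bar w w'\cdot v$, so condition (\ref{assoc1}) reduces to $A v = 2\operatorname{Re}(\beta)\, v$. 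As $\beta$ ranges over $\mathbb C$ the factor $2\operatorname{Re}(\beta)$ ranges over all of $\mathbb R$, and therefore $A \in \mathfrak h$ if and only if $v = (1,1,0,0)$ is an eigenvector of $A$. This is exactly the reduction exploited in Lemma \ref{dimh}.

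Next I would write $\mathfrak g(\Omega_6) = \mathfrak c(\mathfrak{gl}_4(\mathbb R)) \oplus \mathfrak{so}_{1,3}$ explicitly. The centre contributes the scalar matrices $\lambda I_4$, while $\mathfrak{so}_{1,3}$, defined by $X^T J + J X = 0$ with $J = \operatorname{diag}(1,-1,-1,-1)$, consists of matrices with vanishing $(1,1)$-entry, equal off-diagonal first row and first column, and a skew-symmetric lower-right $3 \times 3$ block. A general element of $\mathfrak g(\Omega_6)$ thus takes the form
$$
A = \lambda I_4 + \begin{pmatrix} 0 & p & q & r \\ p & 0 & s & t \\ q & -s & 0 & u \\ r & -t & -u & 0 \end{pmatrix},
$$
depending on the seven real parameters $\lambda, p, q, r, s, t, u$, confirming that $\dim \mathfrak g(\Omega_6) = 7$.

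Imposing the eigenvector condition then amounts to computing $Av = (\lambda + p,\ \lambda + p,\ q - s,\ r - t)$ and requiring its third and fourth components to vanish, which gives precisely the two independent linear relations $s = q$ and $t = r$. Hence the free parameters reduce to $\lambda, p, q, r, u$, so $\dim \mathfrak h = 7 - 2 = 5$. Since $\mathfrak h$ is the Lie algebra of $G(\Omega_6, \mathcal H)$, this yields $\dim G(\Omega_6, \mathcal H) = 5$, as required.

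The computation is entirely routine; the only step requiring genuine care is the initial reduction to the eigenvector condition, which relies on the one-dimensionality of $\mathbb C^{n-k}$ forcing the associated $B$ to be a scalar. The subsequent linear algebra proceeds exactly as in the three-dimensional case of Lemma \ref{dimh}, with one additional skew-symmetric parameter (the entry $u$) that is left unconstrained by the eigenvector condition, accounting for the increase in dimension from $3$ to $5$.
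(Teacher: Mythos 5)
Your proof is correct and follows essentially the same route as the paper: both identify $\mathfrak h$ with the elements of $\mathfrak g(\Omega_6)$ admitting $(1,1,0,0)$ as an eigenvector, write out $\mathfrak g(\Omega_6)=\mathfrak c(\mathfrak{gl}_4(\mathbb R))\oplus\mathfrak{so}_{1,3}$ explicitly as $7$-parameter matrices, and impose the two linear relations $s=q$, $t=r$ to get dimension $5$. Your only addition is an explicit justification (via the scalar $B=\beta$ and condition (\ref{assoc1})) of the eigenvector characterization, which the paper simply asserts, exactly as it does in Lemma \ref{dimh}.
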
 
\begin{proof} 
A straightforward computation of the Lie algebra of $G(\Omega_6, H)$ will prove the lemma. We momentarily denote this Lie algebra by $\mathfrak h$, and note that $\mathfrak h$ consists of all elements of $\mathfrak g(\Omega_6)$ having $(1,1,0,0)$ as an eigenvector. The Lie algebra $\mathfrak{g}(\Omega_6)$ is given by 
$$\mathfrak g (\Omega_6) = \mathfrak c(\mathfrak{gl}_4 (\mathbb R)) \oplus \mathfrak o_{1,3} = \left\{ \left[\begin{array}{cccc} \lambda & p & q & r \\ p & \lambda & s & t \\ q & -s & \lambda & y \\ r & -t & -y & \lambda \end{array} \right]: \text{  } \lambda, p, q, r, s, t, y \in \mathbb R \right\}.$$
Therefore, it follows that 
$$\mathfrak h = \left\{ \left[\begin{array}{cccc} \lambda & p & q & r \\ p & \lambda & q & r \\ q & -q & \lambda & y \\ r & -r & -y & \lambda \end{array} \right]: \text{  } \lambda, p, q, r, y \in \mathbb R \right\}, $$
and we see that $\dim \mathfrak h = 5$ as required. 
\end{proof} 
 By the above lemma and the equality in (\ref{estim1}) we see that for $\mathfrak g = \mathfrak g(D_{13})$ we have $\dim \mathfrak g_0 = 6$ (recall that $s = 1$). Further, the domain $D_{13}$ coincides with the domain $\widetilde{D}_{13}$ with $N=1$ (see \cite[Lemma 4.3]{Isa3}), so we see that for $\mathfrak{g} = \mathfrak{g}(D_{13})$ we have $\mathfrak{g}_{1/2} = 0$. Now using these values for $\dim \mathfrak g_0$ and $\dim \mathfrak g_{1/2}$ along with the second inequality in (\ref{estimm}), we see that 
$$d(D_{13}) = \dim \mathfrak{g}_{-1} + \dim \mathfrak{g}_{-1/2} + \dim \mathfrak{g}_{0} + \dim \mathfrak{g}_{1/2} + \dim \mathfrak{g}_{1} \leq 16 < 18 = n^2 - 7,$$ 
showing no contribution to the classification. 

Now assume that $S(\Omega, H)$ is equivalent to $D_{12}$. To begin with, consider the boundary set $\partial \Omega_5 \setminus \left\{ 0 \right\}$, which can be described 
\begin{align} 
\partial \Omega_5 \setminus \left\{0 \right\} &= \left\{ (v_1,v_2,v_3,v_4) \in \mathbb R^4: v_1^2 \geq v_2^2 + v_3^2,\,\, v_1 > 0,\,\, v_4=0 \right\} \nonumber \\ 
& \hspace{0.5cm} \cup \left\{ (v_1,v_2,v_3,v_4) \in \mathbb R^4: v_1^2 = v_2^2 + v_3^2,\,\, v_1 \geq 0,\,\, v_4>0 \right\}. \nonumber 
\end{align} 
We can further break up this boundary set into four components, which are invariant under the action of $G(\Omega_5)^\circ$, and which we denote by $C_1, C_2, C_3$ and $C_4$. Describing each of these components, we have 
\begin{align}
C_1&:= \left\{ (v_1,v_2,v_3,v_4) \in \mathbb R^4: v_1^2 > v_2^2 + v_3^2,\,\, v_1 > 0,\,\, v_4=0 \right\}, \nonumber \\
C_2&:= \left\{ (v_1,v_2,v_3,v_4) \in \mathbb R^4: v_1^2 = v_2^2 + v_3^2,\,\, v_1 > 0,\,\, v_4=0 \right\}, \nonumber \\
C_3&:= \left\{ (v_1,v_2,v_3,v_4) \in \mathbb R^4: v_1^2 = v_2^2 + v_3^2,\,\, v_1 > 0,\,\, v_4>0 \right\}, \text{ and } \nonumber \\
C_4&:= \left\{ (v_1,v_2,v_3,v_4) \in \mathbb R^4: v_1^2 = v_2^2 + v_3^2,\,\, v_1 = 0,\,\, v_4>0 \right\}. \nonumber 
\end{align} 
Assume first that $v \in C_1$, i.e., that $v \in \Omega_3 \times \left\{ 0 \right\}$ (recall that $\Omega_3 := \Lambda_3$, the Lorentz cone in $\mathbb R^3$). In this situation, we have the following lemma. 
\begin{lemma} \label{componentone} 
If $v \in \Omega_3 \times \left\{0 \right\}$, for $\mathfrak g = \mathfrak g(D_{12})$ we have $\mathfrak g_{1/2}=0$. 
\end{lemma}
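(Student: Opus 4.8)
The plan is to show directly that $\mathfrak{g}_{1/2} = 0$ by analysing the defining conditions of Theorem \ref{descrg1/2} for the specific data at hand, namely $k = 4$, $n - k = 1$, cone $\Omega = \Omega_5 = \Lambda_3 \times \mathbb{R}_+$, and Hermitian form $\mathcal{H}(w,w') = v\,\overline{w}w'$ with $v \in \Omega_3 \times \{0\}$, i.e. $v = (v_1, v_2, v_3, 0)$ satisfying $v_1^2 > v_2^2 + v_3^2$, $v_1 > 0$. Since $n - k = 1$, the map $\Phi : \mathbb{C}^4 \to \mathbb{C}$ is a single $\mathbb{C}$-linear functional and the symmetric bilinear form $c : \mathbb{C} \times \mathbb{C} \to \mathbb{C}$ is just multiplication by a scalar $c \in \mathbb{C}$. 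An element of $\mathfrak{g}_{1/2}$ is thus determined by the pair $(\Phi, c)$, and I must show the only admissible pair is $(0,0)$.

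First I would write out condition (\ref{cond1}) with $w = w'$ a single complex number. Because $H$ is scalar-valued-times-$v$ and $\Phi$ lands in $\mathbb{C}^1$, equation (\ref{cond1}) becomes a vector identity in $\mathbb{R}^4$ relating $c$ to $\Phi$ evaluated on $v|w|^2$; comparing components (and using that $v_4 = 0$) should pin down $c$ in terms of $\Phi(v)$ and force several components of the identity, strongly constraining both. Next I would exploit condition (\ref{Phiw0}): for the single generator $\mathbf{w} = 1$, the real-linear map $\Phi_{\mathbf{w}} : x \mapsto \operatorname{Im}\mathcal{H}(\mathbf{w}, \Phi(x)) = \operatorname{Im}\big(v\,\overline{\mathbf{w}}\,\Phi(x)\big)$, which is a $4 \times 4$ real matrix built from the coefficients of $\Phi$ and the fixed vector $v$, must lie in $\mathfrak{g}(\Omega_5)$. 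Since $\mathfrak{g}(\Omega_5) = \big(\mathfrak{c}(\mathfrak{gl}_3(\mathbb{R})) \oplus \mathfrak{so}_{1,2}\big) \oplus \mathbb{R}$ is the explicit seven-dimensional block-diagonal algebra described in the cone list, membership of $\Phi_{\mathbf{w}}$ forces its matrix entries to satisfy the block structure and the $\mathfrak{so}_{1,2}$ symmetry relations, which I expect to kill all coefficients of $\Phi$.

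The key structural input is that $v \in \Omega_3 \times \{0\}$ is a timelike vector for the Lorentz block, so the rank-one-type matrix $\Phi_{\mathbf{w}}$ (whose rows are essentially proportional to the real and imaginary parts of the coefficient vector of $\Phi$, scaled by $v$) can only lie in the constrained algebra $\mathfrak{g}(\Omega_5)$ if those coefficient vectors vanish. Concretely I would parametrise $\Phi(x) = \sum_{j=1}^4 \phi_j x_j$ with $\phi_j \in \mathbb{C}$, compute $\Phi_{\mathbf{w}}$ as an explicit $4 \times 4$ real matrix in terms of $\operatorname{Re}\phi_j$, $\operatorname{Im}\phi_j$ and $v$, and impose both the block-diagonal vanishing (the $(4,j)$ and $(j,4)$ entries for $j \le 3$ must be zero, forcing relations among the $\phi_j$) and the $\mathfrak{so}_{1,2}$ sign-symmetry of the upper $3 \times 3$ block. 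Together with the constraint already extracted from (\ref{cond1}), this overdetermined linear system on $(\phi_1, \dots, \phi_4, c)$ should have only the trivial solution.

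The main obstacle I anticipate is the bookkeeping of separating real and imaginary parts: $\Phi$ is $\mathbb{C}$-linear but $\Phi_{\mathbf{w}}$ is only $\mathbb{R}$-linear on $\mathbb{R}^4$, so the single complex identity (\ref{cond1}) and the matrix-membership condition (\ref{Phiw0}) interact across the real/imaginary decomposition, and one must be careful that the condition $\Phi_{\mathbf{w}} \in \mathfrak{g}(\Omega_5)$ is applied for the fixed $\mathbf{w} = 1$ (there is only one, up to scale, since $n - k = 1$). I expect the cleanest route is to first use the $v_4 = 0$ feature and the block-diagonal form of $\mathfrak{g}(\Omega_5)$ to force the components of $\Phi$ coupling to the $x_4$-direction to vanish, reducing to the purely Lorentzian sub-problem, and then invoke the fact that $v$ is genuinely timelike (so its stabiliser-type constraints inside $\mathfrak{so}_{1,2}$ leave no room) to conclude $\Phi = 0$, whence (\ref{cond1}) gives $c = 0$ and therefore $\mathfrak{g}_{1/2} = 0$.
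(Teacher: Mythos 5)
Your overall strategy coincides with the paper's: apply Theorem \ref{descrg1/2} to the cone $\Omega_5$ and the form $\mathcal H(w,w')=v\bar ww'$, write $\Phi(x)=\sum_{j=1}^4\varphi_jx_j$, use the explicit block form of $\mathfrak g(\Omega_5)$ to force $\Phi=0$, and then get $c=0$ from (\ref{cond1}). (The paper first normalises $v=(1,0,0,0)$ via the transitive action of $G(\Omega_3)^\circ$ on $\Omega_3$; you keep a general timelike $v$, which is harmless, since the rank-one matrix $(\Phi_{\mathbf w})_{ij}=v_i\operatorname{Im}(\bar{\mathbf w}\varphi_j)$ can be handled directly using $v_1^2>v_2^2+v_3^2$.) However, there is a genuine gap in your treatment of condition (\ref{Phiw0}): you propose to impose it only for the single value $\mathbf w=1$, on the grounds that ``there is only one $\mathbf w$, up to scale, since $n-k=1$.'' This is false in the relevant sense. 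The assignment $\mathbf w\mapsto\Phi_{\mathbf w}$ is only $\mathbb R$-linear in $\mathbf w$ (it involves conjugation and $\operatorname{Im}$), so the condition ``$\Phi_{\mathbf w}\in\mathfrak g(\Omega_5)$ for every $\mathbf w\in\mathbb C$'' amounts to the two \emph{independent} conditions $\Phi_1\in\mathfrak g(\Omega_5)$ and $\Phi_i\in\mathfrak g(\Omega_5)$: the first constrains $\operatorname{Im}\varphi_j$ (since $\Phi_1(x)=v\sum_j\operatorname{Im}(\varphi_j)x_j$), the second constrains $\operatorname{Re}\varphi_j$ (since $\Phi_i(x)=-v\sum_j\operatorname{Re}(\varphi_j)x_j$). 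From $\mathbf w=1$ alone you can only conclude that all $\varphi_j$ are real, not that they vanish.

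This gap is not repaired by the rest of your outline. For the present data, condition (\ref{cond1}) reads
\begin{equation}
v\,\bar w\,c\,(w')^2=2i\,v\,(w')^2\,\bar w\,\overline{\Phi(v)},\nonumber
\end{equation}
which is equivalent to the single scalar identity $c=2i\overline{\Phi(v)}$; it determines $c$ but places \emph{no} constraint on $\Phi$ (contrary to your expectation that it ``strongly constrains both''). Hence after your $\mathbf w=1$ step, a four-real-parameter family of candidate pairs $(\Phi,c)$ with $\varphi_j\in\mathbb R$ and $c=2i\Phi(v)$ survives, and nothing in your argument eliminates it, so the conclusion $\mathfrak g_{1/2}=0$ is out of reach. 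The repair is immediate and is exactly what the paper does: impose (\ref{Phiw0}) for \emph{all} $\mathbf w\in\mathbb C$ (equivalently, for $\mathbf w=1$ and $\mathbf w=i$), so that $\operatorname{Im}(\bar{\mathbf w}\varphi_j)\equiv 0$ as a function of $\mathbf w$, which forces every $\varphi_j=0$; then (\ref{cond1}) gives $c=0$ and $\mathfrak g_{1/2}=0$.
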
 
\begin{proof} 
Since the group $G(\Omega_3)^\circ = \mathbb R_+ \times \operatorname{SO}^{\circ}_{1,2}$ acts transitively on $\Omega_3$, we may suppose that $v=(1,0,0,0)$. We will apply Theorem \ref{descrg1/2} to the cone $\Omega_5$ and the $\Omega_5$-Hermitian form 
\[\mathcal H (w, w')= (\bar ww', 0,0,0). \] 
Let $\Phi: \mathbb C^4 \to \mathbb C$ be a $\mathbb C$-linear map given by 
\[\Phi(z_1,z_2,z_3,z_4) = \varphi_1 z_1 + \varphi_2 z_2 + \varphi_3 z_3 + \varphi_4 z_4 \]
where $\varphi_j \in \mathbb C$. Fixing $\mathbf w \in \mathbb C$, for $x \in \mathbb R^4$ we compute 
\[\mathcal H (\mathbf w, \Phi(x)) = (\bar{\mathbf w}(\varphi_1 x_1 + \varphi_2 x_2 + \varphi_3 x_3 + \varphi_4 x_4),0,0,0). \] 
Then from formula (\ref{Phiw0}) we see
\[\Phi_{\mathbf w}(x)=(\operatorname{Im}(\bar{\mathbf w} \varphi_1)x_1 + \operatorname{Im}(\bar{\mathbf w} \varphi_2)x_2 + \operatorname{Im}(\bar{\mathbf w} \varphi_3)x_3 + \operatorname{Im}(\bar{\mathbf w} \varphi_4)x_4, 0,0,0). \] 
Now, since $\mathfrak g(\Omega_5) = (\mathfrak c(\mathfrak{gl}_3(\mathbb R)) \oplus \mathfrak o_{1,2}) \oplus \mathbb R$ consists of all matrices of the form 
\[ \left[\begin{array}{cccc} \lambda & p & q & 0 \\ p & \lambda & r & 0 \\ q & -r & \lambda & 0 \\ 0 & 0 & 0 & \mu \end{array} \right], \text{  } \lambda, \mu, p, q, r \in \mathbb R. \] 
Therefore, the condition that the map $\Phi_{\mathbf w}$ lies in $\mathfrak g(\Omega_5)$ for every $\mathbf w \in \mathbb C$ immediately yields 
\[ \operatorname{Im}(\bar{\mathbf w} \varphi_1) \equiv 0,\,\, \operatorname{Im}(\bar{\mathbf w} \varphi_2) \equiv 0, \,\,\operatorname{Im}(\bar{\mathbf w} \varphi_3) \equiv 0 \,\, \text{ and } \, \operatorname{Im}(\bar{\mathbf w} \varphi_4) \equiv 0, \] 
which implies $\Phi = 0$. By formula (\ref{cond1}) we then see that $\mathfrak g_{1/2} = 0$ as required. 
\end{proof} 

Now assume that $v \in C_2$, i.e., that $v \in \partial \Omega_3 \setminus \left\{0 \right\} \times \left\{0 \right\}$. In this situation, we have the following lemma, which is analogous to the previous one. 
\begin{lemma} \label{componenttwo} 
If $v \in \partial \Omega_3 \setminus \left\{0 \right\} \times \left\{0 \right\}$, for $\mathfrak g = \mathfrak g(D_{12})$ we have $\mathfrak g_{1/2}=0$. 
\end{lemma}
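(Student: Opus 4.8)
The plan is to argue exactly as in Lemma \ref{componentone}; the sole difference is that after normalisation the relevant Hermitian form now carries two equal nonzero components rather than one. First I would normalise $v$. Since $v \in (\partial\Omega_3 \setminus \{0\}) \times \{0\}$ and the group $G(\Omega_3)^\circ = \mathbb{R}_+ \times \operatorname{SO}^\circ_{1,2}$ acts transitively on $\partial\Omega_3 \setminus \{0\}$, I may take $v = (1,1,0,0)$, so that $v|w|^2 = (|w|^2, |w|^2, 0, 0)$ and the relevant $\Omega_5$-Hermitian form is $\mathcal{H}(w,w') = (\bar w w', \bar w w', 0, 0)$ on $\mathbb{C}$ (recall that $n-k=1$ here).

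Next I would apply Theorem \ref{descrg1/2}. Writing the $\mathbb{C}$-linear map $\Phi : \mathbb{C}^4 \to \mathbb{C}$ as $\Phi(z) = \varphi_1 z_1 + \varphi_2 z_2 + \varphi_3 z_3 + \varphi_4 z_4$ and fixing $\mathbf{w} \in \mathbb{C}$, a direct computation of $\mathcal{H}(\mathbf{w}, \Phi(x))$ followed by formula (\ref{Phiw0}) produces a matrix $\Phi_{\mathbf{w}}$ whose first and second rows both equal $(\operatorname{Im}(\bar{\mathbf{w}}\varphi_1), \operatorname{Im}(\bar{\mathbf{w}}\varphi_2), \operatorname{Im}(\bar{\mathbf{w}}\varphi_3), \operatorname{Im}(\bar{\mathbf{w}}\varphi_4))$ and whose third and fourth rows vanish. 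Imposing $\Phi_{\mathbf{w}} \in \mathfrak{g}(\Omega_5)$ and comparing with the explicit description of $\mathfrak{g}(\Omega_5)$ recorded in the proof of Lemma \ref{componentone}, the vanishing of the third row forces the parameters $\lambda, q, r$ of the $\mathfrak{c}(\mathfrak{gl}_3(\mathbb{R}))\oplus\mathfrak{o}_{1,2}$ block to be zero and the vanishing of the fourth forces $\mu = 0$; matching the $(1,1)$, $(2,2)$, $(1,3)$ and $(1,4)$ entries then yields $\operatorname{Im}(\bar{\mathbf{w}}\varphi_j) = 0$ for every $j$. Since this holds for all $\mathbf{w} \in \mathbb{C}$ (take $\mathbf{w} = 1$ and $\mathbf{w} = i$), each $\varphi_j = 0$, so $\Phi = 0$.

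Finally, with $\Phi = 0$ the condition (\ref{cond1}) collapses to $H(w, c(w', w')) = 0$ for all $w, w' \in \mathbb{C}$; since the first component $\bar w w'$ of $\mathcal{H}$ is nondegenerate, this forces $c \equiv 0$, and Theorem \ref{descrg1/2} then gives $\mathfrak{g}_{1/2} = 0$, as claimed. I expect no genuine obstacle here, as the argument is cosmetically heavier than that of Lemma \ref{componentone} only in that $\Phi_{\mathbf{w}}$ now carries two identical nonzero rows; the one point requiring a little care is to verify that it is precisely the constraints coming from the \emph{third and fourth} rows of $\mathfrak{g}(\Omega_5)$, together with the diagonal structure of the first two, that annihilate every coefficient $\operatorname{Im}(\bar{\mathbf{w}}\varphi_j)$.
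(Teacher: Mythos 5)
Your proposal is correct and follows essentially the same route as the paper: normalise $v=(1,1,0,0)$ via the transitive action of $G(\Omega_3)^\circ$ on $\partial\Omega_3\setminus\{0\}$, compute $\Phi_{\mathbf w}$ for the form $\mathcal H(w,w')=(\bar w w',\bar w w',0,0)$, and use the block structure of $\mathfrak g(\Omega_5)$ to force $\operatorname{Im}(\bar{\mathbf w}\varphi_j)\equiv 0$, hence $\Phi=0$ and, by condition (\ref{cond1}), $\mathfrak g_{1/2}=0$. Your entry-by-entry justification of why $\Phi_{\mathbf w}\in\mathfrak g(\Omega_5)$ kills all the $\varphi_j$, and your explicit derivation of $c\equiv 0$ from (\ref{cond1}), merely spell out steps the paper leaves implicit.
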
 
\begin{proof} 
Since the group $G(\Omega_3)^\circ = \mathbb R_+ \times \operatorname{SO}^{\circ}_{1,2}$ acts transitively on $\partial \Omega_3 \setminus \left\{0 \right\}$, we may suppose that $v=(1,1,0,0)$. We apply Theorem \ref{descrg1/2} to the cone $\Omega_5$ and the $\Omega_5$-Hermitian form 
\[\mathcal H (w, w')= (\bar ww', \bar ww',0,0). \] 
Let $\Phi: \mathbb C^4 \to \mathbb C$ be a $\mathbb C$-linear map given by 
\[\Phi(z_1,z_2,z_3,z_4) = \varphi_1 z_1 + \varphi_2 z_2 + \varphi_3 z_3 + \varphi_4 z_4 \]
where $\varphi_j \in \mathbb C$. Fixing $\mathbf w \in \mathbb C$, for $x \in \mathbb R^4$ we compute 
\begin{align} 
\mathcal H (\mathbf w, \Phi(x)) = (\bar{\mathbf w} & (\varphi_1 x_1 + \varphi_2 x_2 + \varphi_3 x_3 + \varphi_4 x_4), \nonumber  \\ 
& \bar{\mathbf w}(\varphi_1 x_1 + \varphi_2 x_2 + \varphi_3 x_3 + \varphi_4 x_4),0,0). \nonumber 
\end{align} 
Then from formula (\ref{Phiw0}) we see
\begin{align}
\Phi_{\mathbf w}(x)=(\operatorname{Im} & (\bar{\mathbf w} \varphi_1)x_1 + \operatorname{Im}(\bar{\mathbf w} \varphi_2)x_2 + \operatorname{Im}(\bar{\mathbf w} \varphi_3)x_3 + \operatorname{Im}(\bar{\mathbf w} \varphi_4)x_4, \nonumber \\ 
& \operatorname{Im}(\bar{\mathbf w} \varphi_1)x_1 + \operatorname{Im}(\bar{\mathbf w} \varphi_2)x_2 + \operatorname{Im}(\bar{\mathbf w} \varphi_3)x_3 + \operatorname{Im}(\bar{\mathbf w} \varphi_4)x_4,0,0). \nonumber 
\end{align} 
Now, since $\mathfrak g(\Omega_5) = (\mathfrak c(\mathfrak{gl}_3(\mathbb R)) \oplus \mathfrak o_{1,2}) \oplus \mathbb R$ consists of all matrices of the form 
\[ \left[\begin{array}{cccc} \lambda & p & q & 0 \\ p & \lambda & r & 0 \\ q & -r & \lambda & 0 \\ 0 & 0 & 0 & \mu \end{array} \right], \text{  } \lambda, \mu, p, q, r \in \mathbb R. \] 
Therefore, the condition that the map $\Phi_{\mathbf w}$ lies in $\mathfrak g(\Omega_5)$ for every $\mathbf w \in \mathbb C$ immediately yields 
\[ \operatorname{Im}(\bar{\mathbf w} \varphi_1) \equiv 0,\,\, \operatorname{Im}(\bar{\mathbf w} \varphi_2) \equiv 0, \,\,\operatorname{Im}(\bar{\mathbf w} \varphi_3) \equiv 0 \,\, \text{ and } \, \operatorname{Im}(\bar{\mathbf w} \varphi_4) \equiv 0, \] 
which implies $\Phi = 0$. By formula (\ref{cond1}) we then see that $\mathfrak g_{1/2} = 0$ as required. 
\end{proof} 

Now assume that $v \in C_3$, i.e., that $v \in \partial \Omega_3 \setminus \left\{0 \right\} \times \mathbb R_+$. In this situation, we have the following lemma, which is analogous to the previous two. 
\begin{lemma} \label{componentthree} 
If $v \in \partial \Omega_3 \setminus \left\{0 \right\} \times \mathbb R_+$, for $\mathfrak g = \mathfrak g(D_{12})$ we have $\mathfrak g_{1/2}=0$. 
\end{lemma}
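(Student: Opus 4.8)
The plan is to follow the same strategy as Lemmas \ref{componentone} and \ref{componenttwo}: normalize $v$ using the transitive action of a subgroup of $G(\Omega_5)^\circ$ on the boundary component $C_3$, then apply Theorem \ref{descrg1/2} to $\Omega_5$ and the resulting $\Omega_5$-Hermitian form, and show that the membership condition (\ref{Phiw0}) forces the defining linear map $\Phi$ to vanish. The one genuinely new feature relative to the previous two lemmas is that the fourth component of the Hermitian form is now nonzero, and it is precisely this component that supplies the constraints annihilating $\Phi$.

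First I would normalize $v$. The identity component $G(\Omega_5)^\circ$ contains the block-diagonal subgroup $(\mathbb R_+ \times \operatorname{SO}^\circ_{1,2}) \times \mathbb R_+$, whose first factor is $G(\Omega_3)^\circ$ acting transitively on $\partial \Omega_3 \setminus \{0\}$ and whose second factor $\mathbb R_+$ acts transitively on the positive fourth coordinate. Hence this subgroup acts transitively on $C_3 = (\partial \Omega_3 \setminus \{0\}) \times \mathbb R_+$, and we may assume $v = (1,1,0,1)$. The associated $\Omega_5$-Hermitian form is then $\mathcal H(w,w') = (\bar w w', \bar w w', 0, \bar w w')$.

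Next I would apply Theorem \ref{descrg1/2}. Writing $\Phi(z) = \varphi_1 z_1 + \varphi_2 z_2 + \varphi_3 z_3 + \varphi_4 z_4$ with $\varphi_j \in \mathbb C$ and fixing $\mathbf w \in \mathbb C$, a direct computation gives
$$\Phi_{\mathbf w}(x) = \big(L(x), L(x), 0, L(x)\big), \qquad L(x) = \sum_{j=1}^{4} \operatorname{Im}(\bar{\mathbf w}\varphi_j)\,x_j.$$
Setting $c_j := \operatorname{Im}(\bar{\mathbf w}\varphi_j)$, the matrix of $\Phi_{\mathbf w}$ has first, second and fourth rows all equal to $(c_1,c_2,c_3,c_4)$ and third row zero. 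Comparing with the block form of $\mathfrak g(\Omega_5)$ recorded above, membership $\Phi_{\mathbf w} \in \mathfrak g(\Omega_5)$ forces the fourth row to read $(0,0,0,\mu)$, giving $c_1 = c_2 = c_3 = 0$, while the vanishing of the $(1,4)$ entry forces $c_4 = \mu = 0$. Thus $\operatorname{Im}(\bar{\mathbf w}\varphi_j) \equiv 0$ for all $\mathbf w$ and each $j$, so $\Phi = 0$.

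Finally, with $\Phi = 0$ the right-hand side of (\ref{cond1}) vanishes, whence $\mathcal H(w, c(w',w')) = 0$ for all $w,w'$; reading off the first component gives $\bar w\, c(w',w') = 0$ for all $w$, so $c \equiv 0$. Therefore every vector field in $\mathfrak g_{1/2}$ is zero, i.e.\ $\mathfrak g_{1/2} = 0$. I expect the only step needing care to be the transitivity claim that legitimizes the normalization $v = (1,1,0,1)$; the ensuing matrix computation is routine and structurally identical to the preceding lemmas, with the fourth row now playing the role that the off-diagonal entries of the $3\times 3$ Lorentz block played there.
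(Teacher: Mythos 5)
Your proposal is correct and follows essentially the same route as the paper: normalize $v=(1,1,0,1)$, apply Theorem \ref{descrg1/2} to $\Omega_5$ with $\mathcal H(w,w')=(\bar w w',\bar w w',0,\bar w w')$, and use the block-diagonal form of $\mathfrak g(\Omega_5)$ to force $\operatorname{Im}(\bar{\mathbf w}\varphi_j)\equiv 0$, hence $\Phi=0$ and then $c\equiv 0$ via (\ref{cond1}). If anything, you are slightly more careful than the paper in two places: you explicitly invoke the product subgroup $\bigl(\mathbb R_+\times\operatorname{SO}^\circ_{1,2}\bigr)\times\mathbb R_+$ to justify scaling the fourth coordinate of $v$ to $1$, and you spell out why $\Phi=0$ forces $c\equiv 0$.
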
 
\begin{proof} 
Since the group $G(\Omega_3)^\circ = \mathbb R_+ \times \operatorname{SO}^{\circ}_{1,2}$ acts transitively on $\partial \Omega_3 \setminus \left\{0 \right\}$, we may suppose that $v=(1,1,0,1)$. We again apply Theorem \ref{descrg1/2} to the cone $\Omega_5$ and the $\Omega_5$-Hermitian form 
\[\mathcal H (w, w')= (\bar ww', \bar ww',0, \bar ww'). \] 
Let $\Phi: \mathbb C^4 \to \mathbb C$ be a $\mathbb C$-linear map given by 
\[\Phi(z_1,z_2,z_3,z_4) = \varphi_1 z_1 + \varphi_2 z_2 + \varphi_3 z_3 + \varphi_4 z_4 \]
where $\varphi_j \in \mathbb C$. Fixing $\mathbf w \in \mathbb C$, for $x \in \mathbb R^4$ we compute 
\begin{align} 
\mathcal H (\mathbf w, \Phi(x)) = (\bar{\mathbf w} & (\varphi_1 x_1 + \varphi_2 x_2 + \varphi_3 x_3 + \varphi_4 x_4), \nonumber  \\ 
& \bar{\mathbf w}(\varphi_1 x_1 + \varphi_2 x_2 + \varphi_3 x_3 + \varphi_4 x_4), 0, \nonumber \\ 
& \hspace{4mm} \bar{\mathbf w}(\varphi_1 x_1 + \varphi_2 x_2 + \varphi_3 x_3 + \varphi_4 x_4)). \nonumber 
\end{align} 
Then from formula (\ref{Phiw0}) we see
\begin{align}
\Phi_{\mathbf w}(x)=(\operatorname{Im} & (\bar{\mathbf w} \varphi_1)x_1 + \operatorname{Im}(\bar{\mathbf w} \varphi_2)x_2 + \operatorname{Im}(\bar{\mathbf w} \varphi_3)x_3 + \operatorname{Im}(\bar{\mathbf w} \varphi_4)x_4, \nonumber \\ 
& \operatorname{Im}(\bar{\mathbf w} \varphi_1)x_1 + \operatorname{Im}(\bar{\mathbf w} \varphi_2)x_2 + \operatorname{Im}(\bar{\mathbf w} \varphi_3)x_3 + \operatorname{Im}(\bar{\mathbf w} \varphi_4)x_4,0, \nonumber \\
& \hspace{6mm} \operatorname{Im}(\bar{\mathbf w} \varphi_1)x_1 + \operatorname{Im}(\bar{\mathbf w} \varphi_2)x_2 + \operatorname{Im}(\bar{\mathbf w} \varphi_3)x_3 + \operatorname{Im}(\bar{\mathbf w} \varphi_4)x_4). \nonumber 
\end{align} 
Now, since $\mathfrak g(\Omega_5) = (\mathfrak c(\mathfrak{gl}_3(\mathbb R)) \oplus \mathfrak o_{1,2}) \oplus \mathbb R$ consists of all matrices of the form 
\[ \left[\begin{array}{cccc} \lambda & p & q & 0 \\ p & \lambda & r & 0 \\ q & -r & \lambda & 0 \\ 0 & 0 & 0 & \mu \end{array} \right], \text{  } \lambda, \mu, p, q, r \in \mathbb R. \] 
Therefore, the condition that the map $\Phi_{\mathbf w}$ lies in $\mathfrak g(\Omega_5)$ for every $\mathbf w \in \mathbb C$ immediately yields 
\[ \operatorname{Im}(\bar{\mathbf w} \varphi_1) \equiv 0,\,\, \operatorname{Im}(\bar{\mathbf w} \varphi_2) \equiv 0, \,\,\operatorname{Im}(\bar{\mathbf w} \varphi_3) \equiv 0 \,\, \text{ and } \, \operatorname{Im}(\bar{\mathbf w} \varphi_4) \equiv 0, \] 
which implies $\Phi = 0$. By formula (\ref{cond1}) we then see that $\mathfrak g_{1/2} = 0$ as required. 
\end{proof} 
We see from the above three lemmas that for the components $C_1, C_2$ and $C_3$ we have $\mathfrak g_{1/2}=0$. Then by estimate (\ref{estim 8}), the second inequality in (\ref{estimm}) and the above three lemmas, we see that in each of these cases 
\[ d(D_{12}) \le 16 < 18 = n^2-7 \] 
(recall that $s=1$). This shows that in the cases of these components, $S(\Omega, H)$ cannot be equivalent to $D_{12}$, so no new contributions are made to our classification. 

Lastly, let $v \in C_4$, i.e., that $v \in \left\{(0,0,0) \right\} \times \mathbb R_+$. Since $G(\Omega_5)^{\circ}$ clearly acts transitively on this set, we may assume that $v=(0,0,0,1)$. Then $S(\Omega, H)$ is equivalent to the domain 
\begin{align}
B^2 \times T_3 = \big\{(z_1, z_2, z_3, z_4) \in \mathbb C^4 : (\operatorname{Im }z_1)^2 -(\operatorname{Im }&z_2)^2 - (\operatorname{Im }z_3)^2 >0, \,\, \operatorname{Im} z_1 >0, \nonumber \\ 
&\operatorname{Im }z_4 - |w|^2 >0 \big\}. \nonumber 
\end{align} 
Since $d(B^2 \times T_3) = 10 + 8 = 18 = n^2-7$, we see that Case $7$ contributes the product $B^2 \times T_3$ to the classification of homogeneous Kobayashi-hyperbolic manifolds with automorphism group dimension $n^2-7$. 
\vspace{2mm} 

\textbf{Case 8.} 
Suppose that $k=5$ and $n=5$. Then by inequality (\ref{estim2}) we see that in this situation we have $\dim \mathfrak g(\Omega) \geq 8$. Therefore, $S(\Omega, H)$ is equivalent to one of the domains 
$$\left\{z \in \mathbb C^5: \operatorname{Im }z \in \Omega_9 \right\},$$ 
$$\left\{z \in \mathbb C^5: \operatorname{Im }z \in \Omega_{10} \right\},$$ 
and therefore is biholomorphic either to $B^1 \times T_4$, where 
\begin{align}
T_4 = \big\{(z_1, z_2, z_3, z_4) \in \mathbb C^4 : (\operatorname{Im }z_1)^2 -(\operatorname{Im }&z_2)^2 - (\operatorname{Im }z_3)^2 - (\operatorname{Im }z_4)^2>0, \nonumber \\ 
&\operatorname{Im }z_1 >0 \big\}, \nonumber 
\end{align} 
or to $T_5$, where 
\begin{align}
T_5 = \big\{(z_1, z_2, z_3, z_4, z_5) \in \mathbb C^5 : (\operatorname{Im }&z_1)^2 -(\operatorname{Im }z_2)^2 - (\operatorname{Im }z_3)^2 \nonumber \\
&- (\operatorname{Im }z_4)^2 - (\operatorname{Im }z_5)^2 >0, \operatorname{Im }z_1 >0 \big\}. \nonumber 
\end{align} 
In the latter case, the dimension of the automorphism group of this domain is $d(T_5) = 21 > 18 = n^2-7$, so no contribution is made. However, we see that $d(B^1 \times T_4) = 3 + 15 = 18 = n^2-7$, and so Case 4 contributes $B^1 \times T_4$ to the classification of homogeneous hyperbolic manifolds with automorphism group dimension $n^2-7.$ This completes the proof. 

\section{Proof of Theorem 2} 

Let $M$ be a homogeneous Kobayashi-hyperbolic manifold of dimension $n$. By Theorem \cite{N}, the manifold $M$ is biholomorphic to an affinely homogeneous Siegel domain of the second kind $S(\Omega, H)$. As in the previous section, we take $n \ge 4$ and $k \ge 2.$ We can use the following lemma to rule out a large number of remaining possibilities. 

\begin{lemma}
    For the following values of $n$ and $k$, we cannot have $d(S(\Omega, H))= n^2-8$: 
        \begin{enumerate}
            \item $n \ge 7, k \ge 4$, 
            \item $n \ge 8, k=3$, 
            \item $n=6, k=4$, 
            \item $n=6, k=5$. 
        \end{enumerate}
\end{lemma}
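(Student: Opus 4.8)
The plan is to reuse, essentially verbatim, the argument of the previous lemma (the one employed in the proof of Theorem~\ref{main7}), with the target value $n^2-7$ replaced by $n^2-8$. Starting from the universal bound (\ref{estim4}),
$$ d(S(\Omega,H)) \le \frac{3k^2}{2} - \left(2n + \frac{5}{2}\right)k + n^2 + 4n + 1, $$
I would show that for each listed pair $(n,k)$ the right-hand side is strictly smaller than $n^2-8$. After subtracting $n^2$, this is precisely the assertion that $\psi(k)<0$, where
$$ \psi(t) := \frac{3t^2}{2} - \left(2n + \frac{5}{2}\right)t + 4n + 9. $$

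Next I compute the discriminant of $\psi$, obtaining $\mathcal D' = 4n^2 - 14n - \tfrac{191}{4}$; this is exactly $6$ smaller than the discriminant $\mathcal D$ of the earlier lemma, reflecting the shift of the constant term from $8$ to $9$. One checks that $\mathcal D'>0$ for $n\ge 6$, so $\psi$ has two real roots
$$ t_1 = \frac{2n + \frac{5}{2} - \sqrt{\mathcal D'}}{3}, \qquad t_2 = \frac{2n + \frac{5}{2} + \sqrt{\mathcal D'}}{3}. $$
Since the parabola opens upward, $\psi(k)<0$ is equivalent to $t_1 < k < t_2$. I would then verify the relevant one-sided inequalities by clearing the square root: $t_1 < 4$ reduces to $n > \tfrac{23}{4}$, hence holds for $n\ge 6$; the inequality $t_1 < 3$ reduces to $n > \tfrac{15}{2}$, hence holds for $n\ge 8$; and $t_2 > n$ reduces to $(n-6)(n+3)>0$, hence holds for $n\ge 7$.

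These three estimates dispose of the cases almost mechanically. For the lower bound, the values with $k\ge 4$ (cases~1, 3, 4) are covered by $t_1 < 4 \le k$, while the value $k=3$ (case~2, where $n\ge 8$) is covered by $t_1 < 3$. For the upper bound, whenever $n\ge 7$ (cases~1 and~2) we have $k \le n < t_2$, so $k<t_2$. The one point demanding genuine care, and the reason cases~3 and~4 are listed separately rather than folded into a uniform ``$n\ge 6$'' statement, is the endpoint $n=6$: there $t_2 = n = 6$ exactly, so the crude estimate $k\le n < t_2$ breaks down. I expect this $n=6$ boundary to be the only real obstacle, and I would handle it by exploiting that the admissible values $k\in\{4,5\}$ satisfy $k < 6 = t_2$ strictly, which together with $t_1<4$ still gives $t_1 < k < t_2$ and hence $\psi(k)<0$. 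Everything else is a direct transcription of the computation already carried out for Theorem~\ref{main7}.
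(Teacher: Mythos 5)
Your proposal is correct and follows essentially the same route as the paper: the same bound (\ref{estim4}), the same quadratic $\varphi$ with constant term $4n+9$, the same discriminant $4n^2-14n-\tfrac{191}{4}$, and the same three root inequalities. The only (harmless) deviation is at $n=6$: the paper simply substitutes the pairs $(6,4)$ and $(6,5)$ into (\ref{eight}) numerically, whereas you observe that $t_2=6$ exactly and conclude via $t_1<4\le k\le 5<t_2$; both verifications are valid.
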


\begin{proof} 
To prove the lemma, we will show that for $n \geq 7, k \geq 4$, as well as for $n \geq 8, k = 3$ and the two cases $n=6, k=4$ and $n=6, k=5$, the right-hand side of the inequality 
$$ d(S(\Omega, H)) \leq \frac{3k^2}{2} - \left(2n + \frac{5}{2} \right) k + n^2 +4n + 1 $$
is strictly less that $n^2 - 8$. That is, for these $k, n$ the following holds: 
\begin{equation} \label{eight} 
\frac{3k^2}{2} - \left(2n + \frac{5}{2} \right) k  + 4n + 9 < 0. 
\end{equation} 
To see this, consider the quadratic function 
$$ \varphi(t) := \frac{3t^2}{2} - \left(2n + \frac{5}{2} \right) t + 4n + 9. $$ 
The discriminant of $\varphi$ is given by 
$$ \mathcal{D} := 4n^2 - 14n - \frac{191}{4}, $$ 
which is positive for $n \geq 6$. The zeroes of $\varphi$ are given by 
$$t_1 := \frac{2n + \frac{5}{2} - \sqrt{\mathcal D}}{3},$$
$$t_2 := \frac{2n + \frac{5}{2} + \sqrt{\mathcal D}}{3}.$$ 

To prove the lemma, it suffices to show that: (i) $t_2 > n$ for $n \geq 7$, (ii) $t_1 < 4$ for $n \geq 7$, (iii) $t_1 < 3$ for $n \geq 8$, and lastly (iv) each pair $n=6, k=4$ and $n=6, k=5$ satisfies (\ref{eight}).  Beginning with the inequality $t_2 > n$, we have 
$$n - \frac{5}{2} < \sqrt{\mathcal D},$$
or, equivalently, that 
$$n^2 - 3n - 18 > 0,$$ 
which clearly holds for $n \geq 7$. Now considering $t_1 < 4$, we see that 
$$2n - \frac{19}{2} < \sqrt{\mathcal D}, $$ 
or, equivalently, that 
$$n > \frac{23}{4}, $$ 
which holds for $n \geq 7.$ Lastly, the inequality $t_1 < 3$ implies that 
$$2n - \frac{13}{2} < \sqrt{\mathcal D}, $$
or, equivalently, that 
$$n > \frac{15}{2}, $$ 
which holds for $n \geq 8$. 

Finally, the pairs $n=6, k=4$ and $n=6, k=5$ clearly satisfy (\ref{eight}). 
\end{proof} 

By the above lemma, we can prove the theorem by considering the following nine cases: 
\begin{enumerate}
\item $k = 2, n \geq 4.$ 
\item $k = 3, n = 4.$
\item $k = 3, n = 5.$ 
\item $k = 3, n = 6.$ 
\item $k = 3, n = 7.$
\item $k = 4, n = 4.$ 
\item $k = 4, n = 5.$ 
\item $k = 5, n = 5.$
\item $k = 6, n = 6.$
\end{enumerate}
We now begin by considering each case.

\textbf{Case 1.} 
Suppose that $k = 2, n \geq 4.$ Recall from the previous section that $H= (H_1, H_2)$ is a pair of Hermitian forms on $\mathbb C^{n-2}$, where we may take $H_1$ to be positive definite. They are simultaneously diagonalised as 
\[H_1(w,w) = ||w||^2, \hspace{5mm} H_2(w,w) = \sum_{j=1}^{n-2} \lambda_j |w_j|^2. \] 
Recall further that $H_2$ has at least one pair of distinct eigenvalues, and that $m \geq 1$ denotes the number of pairs of these eigenvalues. 

As $\dim \mathfrak{g}(\Omega) = 2$, inequality (\ref{estim2}) yields 
\begin{equation} 
s \geq n^2 - 4n - 6. \label{equat1} 
\end{equation} 
On the other hand, by inequality (\ref{ests}),
\begin{equation} 
s \leq n^2 - 4n + 4.  \label{equat2} 
\end{equation} 
By Lemma \ref{dimsubspaceskewherm} the exact value of $s$ is given by 
$$s = n^2 - 4n + 4 - 2m, $$
which implies $m = 1, 2, 3, 4$ or $5$. The values $m = 1, 2, 3, 4$ are treated as in the previous section, and contribute no additional domains. However, the possibility of $m = 5$ leads to two additional subcases: (g) where $n=6$ with $\lambda_1 \neq \lambda_2 \neq \lambda_3 = \lambda_4$ where $\lambda_1 \ne \lambda_3$, and (h) where $n = 8$ with $\lambda_1 \neq \lambda_2 = \lambda_3 = \lambda_4 = \lambda_5 = \lambda_6.$ When $k = 2$ and $n \geq 4$, $S(\Omega, H)$ is biholomorphic to a product of two unit balls $B^l \times B^{n-l}$ for $1 \leq l \leq n-1$, and the dimension of its automorphism group is given by 
$$d(B^l \times B^{n-l}) = 2l^2 - 2nl + n^2 + 2n.$$ 
Setting the right-hand side equal to $n^2-8$, we see that $l$ is integer-valued only in the case of $n=8.$ In this case, $l = 2,$ and so Case 1 contributes the product $B^2 \times B^6$ to the classification, with $d(B^2 \times B^6) = 8 + 48 = 56 = n^2 - 8.$ 
\vspace{2mm} 

\textbf{Case 2.} 
Suppose that $k = 3, n = 4$. Then $S(\Omega, H)$ is equivalent to either
$$D_3 := \left\{ (z,w) \in \mathbb C^3 \times \mathbb C : \text{Im } z - v|w|^2 \in \Omega_2 \right\},$$ 
where $v = (v_1, v_2, v_3)$ is a vector in $\mathbb R^3$ with non-negative entries, or 
$$D_4 := \left\{ (z,w) \in \mathbb C^3 \times \mathbb C : \text{Im } z - v|w|^2 \in \Omega_3 \right\},$$ 
where $v = (v_1, v_2, v_3)$ is a vector in $\mathbb R^3$ satisfying $v_1^2 \geq v_2^2 + v_3^2,$ $v_1 > 0.$ 
As in the previous section, we begin by assuming that $S(\Omega, H)$ is equivalent to the domain $D_3$. Then $S(\Omega, H)$ can only be biholomorphic to the product $B^1 \times B^1 \times B^2$. This cannot occur, since $d(B^1 \times B^1 \times B^2) = 14 > 8 = n^2-8$.

Therefore, assume $S(\Omega, H)$ is equivalent to $D_4.$ Recall from the previous section that if $v \in \Omega_3$, then the vector $v$ is an eigenvector of every element of $G(\Omega_3, v|w|^2)$, from which it follows that $G(\Omega_3, v|w|^2)$ does not act transitively on $\Omega_3$. Therefore, assume that $v \in \partial \Omega_3 \setminus \left\{ 0 \right\}$ and recall from the analysis of the $k=3, n=4$ case in the previous section that in this situation we have $\dim \mathfrak g_0=4.$ In addition (see \cite[Lemma 3.8 and Proposition A.3]{Isa1}), if $v \in \partial \Omega_3 \setminus \left\{ 0 \right\}$ we have $\dim \mathfrak g_{1/2}=0$ and $\dim \mathfrak g_{1}=1$. 
So we see 
\[ d(D_4) = \dim \mathfrak g_{-1} + \dim \mathfrak g_{-1/2} + \dim \mathfrak g_0 + \dim \mathfrak g_1 = 10. \] 
Since $d(D_4) = 10 > 8 = n^2 - 8,$ we see that $S(\Omega, H)$ is not equivalent to $D_4$, and so Case 2 contributes nothing to our classification. 
\vspace{2mm} 

\textbf{Case 3.} 
Suppose that $k = 3, n = 5$. Here, $S(\Omega, H)$ is linearly equivalent either to 
$$D_5 := \left\{ (z, w) \in \mathbb C^3 \times \mathbb C^2: \text{Im } z - \mathcal H(w,w) \in \Omega_2 \right\}, $$ 
where $\mathcal H $ is an $\Omega_2$-Hermitian form, or to 
$$D_6 := \left\{ (z, w) \in \mathbb C^3 \times \mathbb C^2: \text{Im } z - \mathcal H(w,w) \in \Omega_3 \right\}, $$ 
where $\mathcal H$ is an $\Omega_3$-Hermitian form. Consideration of the domain $D_5$ does not aid our classification since $S(\Omega, H)$ must be biholomorphic to a five-dimensional product of three unit balls, and the only possibilities are $B^1 \times B^1 \times B^3$ and $B^1 \times B^2 \times B^2$. Since neither has automorphism group dimension $17 = n^2 - 8$, we assume then that $S(\Omega, H)$ is equivalent to the domain $D_6$.

By Lemma \ref{dimsubspaceskewherm} we have either $s = 1$, $s = 2$ or $s = 4$. In \cite{Isa3}, each of these scenarios was dealt with in Sections 5, 4 and 3 respectively. When $s=4$ we have $d(D_6) = 15 < 17 = n^2-8$, and when $s=2$ the action of $G(\Omega_3, \mathcal H)$ on $\Omega_3$ is not transitive. So consider the situation when $s=1$. In \cite[Lemma 5.1]{Isa3} it was shown that for the domain $D_6$ with $s=1$ and $\mathfrak g = \mathfrak g(D_6)$ we have $\dim \mathfrak g_{1/2} \le 2.$ We now prove a stronger result. 

\begin{lemma} \label{finallemma} 
For the domain $D_6$ with $s=1$ and $\mathfrak{g}=\mathfrak{g}(D_6)$ we have $\dim\mathfrak{g}_{1/2}= 0$.
\end{lemma}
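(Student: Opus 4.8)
The plan is to apply Theorem \ref{descrg1/2} and then show that the decisive constraint comes from condition (\ref{cond1}); this is exactly what upgrades the bound $\dim\mathfrak g_{1/2}\le 2$ of \cite[Lemma 5.1]{Isa3} to an equality with $0$. Let $M_1,M_2,M_3$ be the Hermitian $2\times 2$ matrices of the components $\mathcal H_1,\mathcal H_2,\mathcal H_3$, so that $\mathcal H_j(w,w')=w^*M_jw'$, and normalise $M_1=I$ by choosing a positive-definite combination of the components. As in the discussion around Lemma \ref{dimsubspaceskewherm}, $\mathcal L$ consists of the $B\in\mathfrak u(2)$ commuting with both $M_2$ and $M_3$, so the hypothesis $s=1$ is equivalent to $[M_2,M_3]\neq 0$ (equivalently, $M_2,M_3$ are non-scalar and not simultaneously diagonalisable). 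An element of $\mathfrak g_{1/2}$ is a pair $(\Phi,c)$ as in Theorem \ref{descrg1/2}, and the goal is to prove $(\Phi,c)=(0,0)$.

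First I would exploit (\ref{Phiw0}). Writing $\phi_i:=\Phi(e_i)$, the $(j,i)$ entry of the matrix $\Phi_{\mathbf w}$ is $\operatorname{Im}(\mathbf w^*M_j\phi_i)$. Since the real functional $\mathbf w\mapsto\operatorname{Im}(\mathbf w^*v)$ vanishes identically precisely when $v=0$, the requirement that $\Phi_{\mathbf w}\in\mathfrak g(\Omega_3)$ for every $\mathbf w$—with $\mathfrak g(\Omega_3)$ given explicitly by (\ref{lieomegathree})—turns the five linear relations defining the Lorentz form into $\phi_2=M_2\phi_1$, $\phi_3=M_3\phi_1$, $M_2^2\phi_1=M_3^2\phi_1=\phi_1$, and $M_2M_3\phi_1+M_3M_2\phi_1=0$. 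In particular $\Phi(z)=(z_1I+z_2M_2+z_3M_3)\phi_1$, so $\Phi$ is determined by the single vector $\phi_1$; this is the content of the estimate $\dim\mathfrak g_{1/2}\le 2$ in \cite[Lemma 5.1]{Isa3}. It therefore remains to rule out $\phi_1\neq 0$.

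The crux is condition (\ref{cond1}), which the previous bound never uses. Substituting $\Phi(z)=(z_1I+z_2M_2+z_3M_3)\phi_1$ and $H(w',w)=(w'^*M_jw)_j$ into (\ref{cond1}), and using that $u\mapsto H(\,\cdot\,,u)$ is injective because $H$ admits a positive-definite combination, I would rewrite (\ref{cond1}) as the system $M_l\,c(w',w')=2i\sum_j(\phi_1^*M_jM_lw')\,M_jw'$ for $l=1,2,3$ and all $w'$. The equation $l=1$ (with $M_1=I$) expresses $c(w',w')$, and substituting it into the equation $l=2$ produces an identity that must hold for every $w'$. Assuming $\phi_1\neq 0$, the relations of the previous paragraph together with $s=1$ pin the pair $(M_2,M_3)$ down, after a unitary change of the $w$-coordinates and a reflection in $G(\Omega_3)$, to the normal form $M_2=\operatorname{diag}(1,-1)$, $M_3=\left[\begin{smallmatrix}0&1\\1&0\end{smallmatrix}\right]$. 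Evaluating the $l=2$ identity at $w'=e_1$ and at $w'=e_2$ then forces $\phi_1=0$, a contradiction. Hence $\Phi=0$; feeding this back into (\ref{cond1}) gives $H(w,c(w',w'))=0$ for all $w$, so $c(w',w')=0$ by nondegeneracy of the positive-definite combination, and $c=0$ by polarisation. Thus $\mathfrak g_{1/2}=0$.

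The main obstacle is precisely that (\ref{Phiw0}) alone is not enough: it leaves a genuine family of admissible $\Phi$ (the bound $\dim\mathfrak g_{1/2}\le 2$ being sharp for that single condition), so the entire force of the lemma must be drawn from (\ref{cond1}). The delicate step is that (\ref{cond1}) has to be satisfied simultaneously by all three components of $H$; it is the incompatibility between the $l=2$ component and the value of $c$ dictated by the $l=1$ component—transparent only once $(M_2,M_3)$ is in normal form—that annihilates $\phi_1$.
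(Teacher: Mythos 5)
Your proof is correct, and although it follows the same two-stage strategy as the paper's proof --- first squeeze $\Phi$ through condition (\ref{Phiw0}), then use (\ref{cond1}) to kill the one surviving configuration --- the execution is genuinely different and considerably cleaner. The paper normalises the coefficient vector $u$ of $|w_1|^2$, which forces a case split ($u\in\Omega_3$ versus $u\in\partial\Omega_3\setminus\{0\}$) and, in each case, a row reduction of a $10\times 6$ linear system in the $\varphi_i^j$ with the parameters $v_1,v_2,v_3,a_2,a_3$ tracked by hand, in order to isolate the exceptional form (\ref{verynewformcalh}). Your normalisation $M_1=I$ avoids the case split altogether: condition (\ref{Phiw0}) becomes the coordinate-free relations $\phi_2=M_2\phi_1$, $\phi_3=M_3\phi_1$, $(M_2^2-I)\phi_1=(M_3^2-I)\phi_1=(M_2M_3+M_3M_2)\phi_1=0$, and a short eigenvalue analysis (diagonalise $M_2$, use the anticommutator relation and $[M_2,M_3]\neq 0$ to exclude eigenvalues with square different from $1$, then pin down $M_3$) replaces the row reductions and lands on the same Pauli-type normal form $M_2=\operatorname{diag}(1,-1)$, $M_3=\left[\begin{smallmatrix}0&1\\1&0\end{smallmatrix}\right]$, i.e.\ (\ref{verynewformcalh}). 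Your endgame is also organised differently: you eliminate $c$ via the $l=1$ component of (\ref{cond1}) and evaluate the resulting $l=2$ compatibility identity at $w'=e_1$ and $w'=e_2$, which indeed forces $\phi_1^*e_2=0$ and $\phi_1^*e_1=0$, hence $\phi_1=0$; the paper instead keeps $c$ and compares the coefficients of $(w_2')^2\bar w_1$ and $(w_1')^2\bar w_2$. Both are valid, but your version makes transparent that the obstruction is a compatibility condition between the components of $H$, whereas the paper's computation buys explicitness at the price of length. Two side remarks in your write-up are inaccurate, though neither affects the argument: the relation $\Phi(z)=(z_1I+z_2M_2+z_3M_3)\phi_1$ by itself only bounds $\dim\mathfrak g_{1/2}$ by $4$ (in the Pauli case the three extra constraints on $\phi_1$ are vacuous), so it is not ``the content of'' the bound $\dim\mathfrak g_{1/2}\le 2$ of \cite[Lemma 5.1]{Isa3}, and for the same reason your claim that that bound never uses (\ref{cond1}) cannot be right. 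Finally, the normal-form step, which you state in a single sentence, is where the real work of your route sits and should be written out in full; it is, however, a correct and elementary piece of linear algebra.
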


\begin{proof} Let us write the $\Omega_3$-Hermitian form ${\mathcal H}$ as
$$
{\mathcal H}=u|w_1|^2+v|w_2|^2+a\bar w_1w_2+\bar a\, \bar w_2 w_1,
$$
where $u,v\in\mathbb R^3$ and $a\in\mathbb C^3$. Choosing $w_1=0$ and $w_2=0$ shows that $u,v\in\bar\Omega_3\setminus\{0\}$. We will consider two cases.

{\bf Case (i).} Suppose first that $u\in\Omega_3$. Then, as the cone $\Omega_3$ is homogeneous, we may assume that $u=(1,0,0)$. Further, replacing $w_1$ by $w_1+a_1w_2$, we may suppose that $a_1=0$. The above steps allow us to reduce $\mathcal H$ to the form 
\[ \mathcal H = \left[ \begin{array}{c} |w_1|^2 +v_1|w_2|^2 \\ v_2|w_2|^2 +a_2\, \bar w_1w_2+\bar a_2\, \bar w_2 w_1 \\ v_3|w_2|^2+ a_3\, \bar w_1w_2+ \bar a_3\, \bar w_2 w_1 \end{array} \right]. \]

\begin{remark} 
In \cite{Isa3}, Isaev further reduced the Hermitian form above by rotating the variables $z_2, z_3$ by a transformation from $\operatorname{O}_2,$ and thus assumed that $\mathcal H_3$ has no $|w_2|^2$-term, that is, $v_3=0$. We refrain from taking this step and assume the variable $v_3$ is not necessarily zero. 
\end{remark} 

To utilise Theorem \ref{descrg1/2}, let $\Phi:\mathbb C^3\to\mathbb C^2$ be a $\mathbb C$-linear map
\begin{equation}
\Phi(z_1,z_2,z_3)=\left(\varphi_1^1z_1+\varphi_2^1z_2+\varphi_3^1z_3, \varphi_1^2z_1+\varphi_2^2z_2+\varphi_3^2z_3\right),\label{mapPhineww}
\end{equation}
where $\varphi_i^j\in\mathbb C$. Fixing ${\mathbf w}\in\mathbb C^2$, for $x\in\mathbb R^3$ we compute 
\begin{equation} 
\begin{split} 
\mathcal H (\mathbf w, \Phi(x)) &= \big( \bar{\mathbf w}_1(\varphi_1^1x_1+\varphi_2^1x_2+\varphi_3^1x_3)+
v_1\bar{\mathbf w}_2(\varphi_1^2x_1+\varphi_2^2x_2+\varphi_3^2x_3), \\[7pt] 
& \hspace{1.3cm} v_2\bar{\mathbf w}_2(\varphi_1^2x_1+\varphi_2^2x_2+\varphi_3^2x_3)+a_2\bar{\mathbf w}_1(\varphi_1^2x_1+\varphi_2^2x_2+\varphi_3^2x_3) \\[7pt] 
& \hspace{0.8cm} + \bar{a}_2\bar{\mathbf w}_2(\varphi_1^1x_1+\varphi_2^1x_2+\varphi_3^1x_3), v_3 \bar{\mathbf w}_2(\varphi_1^2x_1+\varphi_2^2x_2+\varphi_3^2x_3) \\[7pt] 
& \hspace{0.8cm} + a_3 \bar{\mathbf w}_1(\varphi_1^2x_1+\varphi_2^2x_2+\varphi_3^2x_3) + \bar a_3 \bar{\mathbf w}_2(\varphi_1^1x_1+\varphi_2^1x_2+\varphi_3^1x_3) \big) \\[7pt]
&= \big( (\varphi_1^1\bar{\mathbf w}_1+v_1\varphi_1^2\bar{\mathbf w}_2)x_1+(\varphi_2^1\bar{\mathbf w}_1+v_1\varphi_2^2\bar{\mathbf w}_2)x_2 \\[7pt]
& \hspace{0.8cm} +(\varphi_3^1\bar{\mathbf w}_1+v_1\varphi_3^2\bar{\mathbf w}_2)x_3, (a_2\varphi_1^2\bar{\mathbf w}_1+(\bar a_2\varphi_1^1+v_2\varphi_1^2)\bar{\mathbf w}_2)x_1 \\[7pt] 
& \hspace{0.8cm} + (a_2\varphi_2^2\bar{\mathbf w}_1+(\bar a_2\varphi_2^1+v_2\varphi_2^2)\bar{\mathbf w}_2)x_2 + (a_2\varphi_3^2\bar{\mathbf w}_1 \\[7pt] 
& \hspace{0.8cm} + (\bar a_2\varphi_3^1+v_2\varphi_3^2)\bar{\mathbf w}_2)x_3, (a_3 \varphi_1^2\bar{\mathbf w}_1+(\bar a_3 \varphi_1^1+v_3 \varphi_1^2)\bar{\mathbf w}_2)x_1 \\[7pt] 
& \hspace{0.8cm} + (a_3 \varphi_2^2\bar{\mathbf w}_1+(\bar a_3\varphi_2^1+v_3 \varphi_2^2)\bar{\mathbf w}_2)x_2+(a_3 \varphi_3^2\bar{\mathbf w}_1+(\bar a_3 \varphi_3^1+v_3 \varphi_3^2)\bar{\mathbf w}_2)x_3 \big). 
\nonumber 
\end{split} 
\end{equation} 
Then from formula (\ref{Phiw0}) we see
\begin{equation} 
\begin{split} 
\Phi_{{\mathbf w}}(x)&= \big( (\operatorname{Im}(\varphi_1^1\bar{\mathbf w}_1)+v_1\operatorname{Im}(\varphi_1^2\bar{\mathbf w}_2))x_1+(\operatorname{Im}(\varphi_2^1\bar{\mathbf w}_1)+v_1\operatorname{Im}(\varphi_2^2\bar{\mathbf w}_2))x_2 \\[7pt] 
& \hspace{0.8cm} + (\operatorname{Im}(\varphi_3^1\bar{\mathbf w}_1)+v_1\operatorname{Im}(\varphi_3^2\bar{\mathbf w}_2))x_3, (\operatorname{Im}(a_2\varphi_1^2\bar{\mathbf w}_1) \\[7pt] 
& \hspace{0.8cm} + \operatorname{Im}((\bar a_2\varphi_1^1+v_2\varphi_1^2)\bar{\mathbf w}_2))x_1 + (\operatorname{Im}(a_2\varphi_2^2\bar{\mathbf w}_1) \\[7pt] 
& \hspace{0.8cm} + \operatorname{Im}((\bar a_2\varphi_2^1+v_2\varphi_2^2)\bar{\mathbf w}_2))x_2 + (\operatorname{Im}(a_2\varphi_3^2\bar{\mathbf w}_1) \\[7pt] 
& \hspace{0.8cm} + \operatorname{Im}((\bar a_2\varphi_3^1 + v_2\varphi_3^2)\bar{\mathbf w}_2))x_3, (\operatorname{Im}(a_3 \varphi_1^2\bar{\mathbf w}_1)+\operatorname{Im}((\bar a_3 \varphi_1^1+v_3 \varphi_1^2)\bar{\mathbf w}_2))x_1 \\[7pt] 
& \hspace{0.8cm} + (\operatorname{Im}(a_3 \varphi_2^2\bar{\mathbf w}_1) + \operatorname{Im}((\bar a_3 \varphi_2^1+ v_3 \varphi_2^2)\bar{\mathbf w}_2))x_2+(\operatorname{Im}(a_3 \varphi_3^2\bar{\mathbf w}_1) \\[7pt] 
& \hspace{1.8cm} + \operatorname{Im}((\bar a_3 \varphi_3^1+v_3 \varphi_3^2)\bar{\mathbf w}_2))x_3 \big). 
\nonumber 
\end{split} 
\end{equation}

Using (\ref{lieomegathree}), we then see that the condition that $\Phi_{{\mathbf w}}$ lies in ${\mathfrak g}(\Omega_3)$ for every ${\mathbf w}\in\mathbb C^2$ leads to the relations
\begin{align} 
\varphi_1^1&=a_2\varphi_2^2=a_3 \varphi_3^2 \nonumber \\[3pt] 
v_1\varphi_1^2&=\bar a_2\varphi_2^1+v_2\varphi_2^2=\bar a_3 \varphi_3^1+v_3 \varphi_3^2 \nonumber \\[3pt] 
\varphi_2^1&=a_2\varphi_1^2 \nonumber \\[3pt] 
v_1\varphi_2^2&=\bar a_2\varphi_1^1+v_2\varphi_1^2 \nonumber \\[3pt] 
\varphi_3^1&=a_3 \varphi_1^2 \nonumber \\[3pt] 
\bar a_3 \varphi_1^1+v_3 \varphi_1^2&=v_1\varphi_3^2 \nonumber \\[3pt] 
a_3 \varphi_2^2&=-a_2\varphi_3^2 \nonumber \\[3pt] 
\bar a_3 \varphi_2^1+v_3 \varphi_2^2&=-\bar a_2\varphi_3^1-v_2\varphi_3^2. \nonumber 
\end{align} 

If $a_2=0$, it immediately follows that $\Phi=0$. Similarly, if $a_3=0$, it also immediately follows that $\Phi=0.$ If both $a_2=0$ and $a_3=0$, a short row echelon computation shows that $\Phi=0.$ Thus by formula (\ref{cond1}) we have $\mathfrak{g}_{1/2}=0$. Suppose then that $a_2\ne 0$ and $a_3 \ne 0$. By scaling $w_2$, we can assume $a_3=1$. Then it follows that all $\varphi_i^j = 0$ unless $v_1=1, v_2=0, v_3=0$ and $a_2 = \pm i.$ We provide a brief sketch of the argument used to show this, which amounts to a standard row reduction of a large matrix. Writing the ten equations given above in matrix form, we have 
\[ 
\left[ \begin{array}{cccccc} 
1&0&0&0&-a_2&0 \\ 
1&0&0&0&0&-1 \\ 
0&-\bar{a}_2&0&v_1&-v_2&0 \\ 
0&0&-1&v_1&0&-v_3 \\ 
0&1&0&-a_2&0&0 \\ 
-\bar{a}_2&0&0&-v_2&v_1&0 \\ 
0&0&1&-1&0&0 \\ 
-1&0&0&-v_3&0&v_1 \\ 
0&0&0&0&1&a_2 \\ 
0&1&\bar{a}_2&0&v_3&v_2 
\end{array} \right] 
\left[ \begin{array}{c} \varphi_1^1 \\ \varphi_2^1 \\ \varphi_3^1 \\ \varphi_1^2 \\ \varphi_2^2 \\ \varphi_3^2 \end{array} \right] = 
\left[ \begin{array}{c} 0 \\ 0 \\ 0 \\ 0 \\ 0 \\ 0 \\ 0 \\ 0 \\ 0 \\ 0 \end{array} \right].
\]

We proceed to row reduce this matrix. After securing pivots in the first three columns, we can focus on the remaining $7 \times 3$ matrix. We begin the reduction of this matrix by assuming $v_1 \ne 1$. After securing pivots in the first two columns, by then varying the values of $a_2$ we can always get a pivot in the third column. We then assume $v_1=1$. We see at this stage that if $v_3$ is non-zero, we get a pivot in every column, and therefore assume $v_3 = 0$. Continuing in this fashion, we eventually see that only when assuming $v_1=1, v_2=0, v_3=0$ and $a_2 = \pm i$ does the matrix fail to be full rank. Therefore, in situations other than this we have $\varphi_i^j = 0$ for all $i, j$. Then $\Phi = 0$, and by formula (\ref{cond1}) we have $\mathfrak{g}_{1/2}=0$.

We thus assume that $v_1=1, v_2=0, v_3=0$ and $a_2 = \pm i$. In this situation, the Hermitian form $\mathcal H$ is given by 
$$
{\mathcal H}=(|w_1|^2+|w_2|^2,\pm i(\bar w_1w_2-\bar w_2 w_1), \bar w_1w_2+\bar w_2 w_1).
$$
Changing the $w$-variables as
$$
w_1\mapsto -\frac{i}{\sqrt{2}}(w_1+iw_2),\,\,w_2\mapsto \frac{1}{\sqrt{2}}(w_1-iw_2),
$$
we can suppose that
$$
{\mathcal H}=(|w_1|^2+|w_2|^2,\mp(|w_1|^2-|w_2|^2), \bar w_1w_2+\bar w_2 w_1).
$$
Further, swapping $w_1$ and $w_2$ if necessary, we reduce our considerations to the case where
\begin{equation}
{\mathcal H}=(|w_1|^2+|w_2|^2,|w_1|^2-|w_2|^2, \bar w_1w_2+\bar w_2 w_1).\label{verynewformcalh}
\end{equation}

We will now show that for the above $\Omega_3$-Hermitian form ${\mathcal H}$ one has $\mathfrak{g}_{1/2}=0$. Consider a map $\Phi:\mathbb C^3\to\mathbb C^2$ as in (\ref{mapPhineww}), fix ${\mathbf w}\in\mathbb C^2$, and for $x\in\mathbb R^3$ compute
\begin{equation} 
\begin{split} 
\mathcal H (\mathbf w, \Phi(x)) &= \big( \bar{\mathbf w}_1 (\varphi_1^1 x_1 + \varphi_2^1 x_2 + \varphi_3^1 x_3) + \bar{\mathbf w}_2 (\varphi_1^2 x_1 + \varphi_2^2 x_2 + \varphi_3^2 x_3), \\[7pt] 
& \hspace{0.8cm} \bar{\mathbf w}_1 (\varphi_1^1 x_1 + \varphi_2^1 x_2 + \varphi_3^1 x_3) - \bar{\mathbf w}_2 (\varphi_1^2 x_1 + \varphi_2^2 x_2 + \varphi_3^2 x_3), \\[7pt] 
& \hspace{0.8cm} \bar{\mathbf w}_1 (\varphi_1^2 x_1 + \varphi_2^2 x_2 + \varphi_3^2 x_3) + \bar{\mathbf w}_2 (\varphi_1^1 x_1 + \varphi_2^1 x_2 + \varphi_3^1 x_3) \big) \\[7pt] 
&= \big( (\varphi_1^1\bar{\mathbf w}_1+\varphi_1^2\bar{\mathbf w}_2)x_1+(\varphi_2^1\bar{\mathbf w}_1+\varphi_2^2\bar{\mathbf w}_2)x_2+(\varphi_3^1\bar{\mathbf w}_1+\varphi_3^2\bar{\mathbf w}_2)x_3, \\[7pt] 
& \hspace{0.8cm} (\varphi_1^1\bar{\mathbf w}_1-\varphi_1^2\bar{\mathbf w}_2)x_1+(\varphi_2^1\bar{\mathbf w}_1-\varphi_2^2\bar{\mathbf w}_2)x_2+(\varphi_3^1\bar{\mathbf w}_1-\varphi_3^2\bar{\mathbf w}_2)x_3, \\[7pt] 
& \hspace{0.8cm} (\varphi_1^2\bar{\mathbf w}_1+\varphi_1^1\bar{\mathbf w}_2)x_1+(\varphi_2^2\bar{\mathbf w}_1+\varphi_2^1\bar{\mathbf w}_2)x_2+(\varphi_3^2\bar{\mathbf w}_1+\varphi_3^1\bar{\mathbf w}_2)x_3 \big). 
\nonumber 
\end{split} 
\end{equation} 
Then from formula (\ref{Phiw0}) we see
$$
\begin{array}{l}
\displaystyle\Phi_{{\mathbf w}}(x)=\left((\operatorname{Im}(\varphi_1^1\bar{\mathbf w}_1)+\operatorname{Im}(\varphi_1^2\bar{\mathbf w}_2))x_1+(\operatorname{Im}(\varphi_2^1\bar{\mathbf w}_1)+\operatorname{Im}(\varphi_2^2\bar{\mathbf w}_2))x_2\right.\\
\vspace{-0.1cm}\\
\hspace{1.8cm}\left.+(\operatorname{Im}(\varphi_3^1\bar{\mathbf w}_1)+\operatorname{Im}(\varphi_3^2\bar{\mathbf w}_2))x_3,(\operatorname{Im}(\varphi_1^1\bar{\mathbf w}_1)-\operatorname{Im}(\varphi_1^2\bar{\mathbf w}_2))x_1\right.\\
\vspace{-0.1cm}\\
\hspace{1.8cm}\left.+(\operatorname{Im}(\varphi_2^1\bar{\mathbf w}_1)-\operatorname{Im}(\varphi_2^2\bar{\mathbf w}_2))x_2+(\operatorname{Im}(\varphi_3^1\bar{\mathbf w}_1)-\operatorname{Im}(\varphi_3^2\bar{\mathbf w}_2))x_3,\right.\\
\vspace{-0.1cm}\\
\hspace{2.1cm}\left.(\operatorname{Im}(\varphi_1^2\bar{\mathbf w}_1)+\operatorname{Im}(\varphi_1^1\bar{\mathbf w}_2))x_1+(\operatorname{Im}(\varphi_2^2\bar{\mathbf w}_1)+\operatorname{Im}(\varphi_2^1\bar{\mathbf w}_2))x_2\right.\\
\vspace{-0.1cm}\\
\hspace{3.0cm}\left.+(\operatorname{Im}(\varphi_3^2\bar{\mathbf w}_1)+\operatorname{Im}(\varphi_3^1\bar{\mathbf w}_2))x_3\right).
\end{array}
$$
From (\ref{lieomegathree}) we then see that the condition that $\Phi_{{\mathbf w}}$ lies in ${\mathfrak g}(\Omega_3)$ for every ${\mathbf w}\in\mathbb C^2$ leads to the relations
\begin{equation}
\begin{array}{l}
\varphi_1^1=\varphi_2^1=\varphi_3^2,\,\,\varphi_1^2=-\varphi_2^2=\varphi_3^1,\\
\end{array}\label{relmapphibig11}
\end{equation}

Further, let $c$ be a symmetric $\mathbb C$-bilinear form on $\mathbb C^2$ with values in $\mathbb C^2$:
$$
c(w,w)=\left(c^1_{11}w_1^2+2c^1_{12}w_1w_2+c^1_{22}w_2^2,c^2_{11}w_1^2+2c^2_{12}w_1w_2+c^2_{22}w_2^2\right),
$$
where $c^{\ell}_{ij}\in\mathbb C$. Then for $w,w'\in\mathbb C^2$ using (\ref{verynewformcalh}) we calculate
\begin{equation} 
\begin{split} 
{\mathcal H}(w,c(w',w'))&=\big( \bar w_1(c^1_{11}(w_1')^{2}+2c^1_{12}w_1'w_2'+c^1_{22}(w_2')^{2})+\bar w_2(c^2_{11}(w_1')^{2} \\[7pt] 
& \hspace{0.8cm} + 2c^2_{12}w_1'w_2'+c^2_{22}(w_2')^{2}), \bar w_1(c^1_{11}(w_1')^{2}+2c^1_{12}w_1'w_2' \\[7pt] 
& \hspace{0.8cm} + c^1_{22}(w_2')^{2}) -w_2(c^2_{11}(w_1')^{2}+2c^2_{12}w_1'w_2'+c^2_{22}(w_2')^{2}), \\[7pt] 
& \hspace{1.3cm} \bar w_1(c^2_{11}(w_1')^{2}+2c^2_{12}w_1'w_2' + c^2_{22}(w_2')^{2}) \\[7pt] 
& \hspace{0.8cm} + \bar w_2(c^1_{11}(w_1')^{2}+2c^1_{12}w_1'w_2'+c^1_{22}(w_2')^{2}) \big). 
\end{split} \label{expressdiff1}
\end{equation} 
On the other hand, we have
\begin{equation} 
\begin{split} 
\Phi({\mathcal H}(w',w))&=\big(\varphi_1^1(\bar w_1'w_1+\bar w_2'w_2)+\varphi_2^1(\bar w_1'w_1-\bar w_2'w_2)+\varphi_3^1(\bar w_1' w_2+\bar w_2'w_1), \\[7pt] 
& \hspace{0.8cm} \varphi_1^2(\bar w_1'w_1+\bar w_2'w_2)+\varphi_2^2(\bar w_1'w_1-\bar w_2'w_2)+\varphi_3^2(\bar w_1' w_2+\bar w_2'w_1)\big) \\[7pt] 
& = \big( (\varphi_1^1+\varphi_2^1)\bar w_1'w_1+(\varphi_1^1-\varphi_2^1)\bar w_2'w_2+\varphi_3^1(\bar w_1' w_2+\bar w_2'w_1), \\[7pt] 
& \hspace{0.8cm} (\varphi_1^2+\varphi_2^2)\bar w_1'w_1+(\varphi_1^2-\varphi_2^2)\bar w_2'w_2+\varphi_3^2(\bar w_1' w_2+\bar w_2'w_1)\big). 
\nonumber 
\end{split} 
\end{equation} 
Therefore
\begin{equation}
\makebox[250pt]{$\begin{array}{l}
2i{\mathcal H}(\Phi({\mathcal H}(w',w)),w')=\\
\vspace{-0.1cm}\\
\hspace{0.8cm}
2i\left(w_1'\left((\bar\varphi_1^1+\bar\varphi_2^1)w_1'\bar w_1+(\bar\varphi_1^1-\bar\varphi_2^1) w_2'\bar w_2+\bar\varphi_3^1(w_1' \bar w_2+w_2'\bar w_1)\right)\right.\\
\vspace{-0.1cm}\\
\hspace{1.2cm}\left.+w_2'\left((\bar\varphi_1^2+\bar\varphi_2^2)w_1'\bar w_1+(\bar\varphi_1^2-\bar\varphi_2^2) w_2'\bar w_2+\bar\varphi_3^2(w_1' \bar w_2+w_2'\bar w_1)\right),\right.\\
\vspace{-0.1cm}\\
\hspace{1.5cm}\left.w_1'\left((\bar\varphi_1^1+\bar\varphi_2^1)w_1'\bar w_1+(\bar\varphi_1^1-\bar\varphi_2^1) w_2'\bar w_2+\bar\varphi_3^1(w_1' \bar w_2+w_2'\bar w_1)\right)\right.\\
\vspace{-0.1cm}\\
\hspace{1.2cm}\left.-w_2'\left((\bar\varphi_1^2+\bar\varphi_2^2)w_1'\bar w_1+(\bar\varphi_1^2-\bar\varphi_2^2) w_2'\bar w_2+\bar\varphi_3^2(w_1' \bar w_2+w_2'\bar w_1)\right),\right.\\
\vspace{-0.1cm}\\
\hspace{1.5cm}\left. w_1'\left((\bar\varphi_1^2+\bar\varphi_2^2)w_1'\bar w_1+(\bar\varphi_1^2-\bar\varphi_2^2) w_2'\bar w_2+\bar\varphi_3^2(w_1' \bar w_2+w_2'\bar w_1)\right)\right.\\
\vspace{-0.1cm}\\
\hspace{1.2cm}\left.+w_2'\left((\bar\varphi_1^1+\bar\varphi_2^1)w_1'\bar w_1+(\bar\varphi_1^1-\bar\varphi_2^1) w_2'\bar w_2+\bar\varphi_3^1(w_1' \bar w_2+w_2'\bar w_1)\right)\right).
\end{array}$}\label{expressdiff2}
\end{equation}
Let us now compare expressions (\ref{expressdiff1}) and (\ref{expressdiff2}) as required by condition (\ref{cond1}). Specifically, looking at the coefficients of $(w_2')^2\bar w_1$ and $(w_1')^2\bar w_2$ in the first and second components of these expressions, we obtain the identities:
$$
c_{22}^1=2i\bar\varphi_3^2,\qquad c_{22}^1=-2i\bar\varphi_3^2,\qquad c_{11}^2=2i\bar\varphi_3^1,\qquad -c_{11}^2=2i\bar\varphi_3^1,
$$
which imply $\varphi_3^1=0$, $\varphi_3^2=0$. Taken together with (\ref{relmapphibig11}), these conditions yield $\Phi=0$, hence ${\mathfrak g}_{1/2}=0$ as required.

{\bf Case (ii).} Suppose now that $u\in\partial\Omega_3\setminus\{0\}$. In this situation, as the group $G(\Omega_3)^{\circ}=\mathbb R_{+}\times \operatorname{SO}_{1,2}^{\circ}$ acts transitively on $\partial\Omega_3\setminus\{0\}$, we may assume that\linebreak $u=(1,1,0)$. Further, replacing $w_1$ by $w_1+a_1w_2$, we may suppose that $a_1=0$. The above steps allow us to reduce $\mathcal H$ to the form 
\[ \mathcal H = \left[ \begin{array}{c} |w_1|^2 +v_1|w_2|^2 \\ |w_1|^2 + v_2|w_2|^2 +a_2\bar w_1w_2+\bar a_2\, \bar w_2 w_1 \\ v_3|w_2|^2 +a_3\bar w_1w_2+\bar a_3\, \bar w_2 w_1 \end{array} \right]. \]

Let $\Phi:\mathbb C^3\to\mathbb C^2$ be a $\mathbb C$-linear map as in (\ref{mapPhineww}). Fixing ${\mathbf w}\in\mathbb C^2$, for $x\in\mathbb R^3$ we compute
\begin{equation} 
\begin{split} 
\mathcal H (\mathbf w, \Phi(x)) &= \big( \bar{\mathbf w}_1(\varphi_1^1x_1+\varphi_2^1x_2+\varphi_3^1x_3)+
v_1\bar{\mathbf w}_2(\varphi_1^2x_1+\varphi_2^2x_2+\varphi_3^2x_3), \\[7pt] 
& \hspace{1.3cm} \bar{\mathbf w}_1(\varphi_1^1x_1+\varphi_2^1x_2+\varphi_3^1x_3)+v_2\bar{\mathbf w}_2(\varphi_1^2x_1+\varphi_2^2x_2+\varphi_3^2x_3) \\[7pt] 
& \hspace{0.8cm} + a_2\bar{\mathbf w}_1(\varphi_1^2x_1+\varphi_2^2x_2+\varphi_3^2x_3)+\bar a_2\bar{\mathbf w}_2(\varphi_1^1x_1+\varphi_2^1x_2+\varphi_3^1x_3), \\[7pt] 
& \hspace{1.3cm} (\varphi_1^2x_1+\varphi_2^2x_2+\varphi_3^2x_3)+a_3\bar{\mathbf w}_1(\varphi_1^2x_1+\varphi_2^2x_2+\varphi_3^2x_3) \\[7pt] 
& \hspace{0.8cm} + \bar a_3\bar{\mathbf w}_2(\varphi_1^1x_1+\varphi_2^1x_2+\varphi_3^1x_3) \big) \\[7pt] 
&= \big( (\varphi_1^1\bar{\mathbf w}_1+v_1\varphi_1^2\bar{\mathbf w}_2)x_1+ (\varphi_2^1\bar{\mathbf w}_1+v_1\varphi_2^2\bar{\mathbf w}_2)x_2 \\[7pt] 
& \hspace{0.8cm} + (\varphi_3^1\bar{\mathbf w}_1+v_1\varphi_3^2\bar{\mathbf w}_2)x_3, ( (\varphi_1^1+a_2\varphi_1^2)\bar{\mathbf w}_1+ (\bar a_2\varphi_1^1+v_2\varphi_1^2)\bar{\mathbf w}_2)x_1 \\[7pt] 
& \hspace{0.8cm} + ((\varphi_2^1+a_2\varphi_2^2)\bar{\mathbf w}_1+(\bar a_2\varphi_2^1+v_2\varphi_2^2)\bar{\mathbf w}_2)x_2 +((\varphi_3^1+a_2\varphi_3^2)\bar{\mathbf w}_1 \\[7pt] 
& \hspace{0.8cm} +(\bar a_2\varphi_3^1+v_2\varphi_3^2)\bar{\mathbf w}_2)x_3, (a_3\varphi_1^2\bar{\mathbf w}_1+(\bar a_3\varphi_1^1+v_3\varphi_1^2)\bar{\mathbf w}_2)x_1 \\[7pt] 
& \hspace{0.8cm} + (a_3\varphi_2^2\bar{\mathbf w}_1+(\bar a_3\varphi_2^1+v_3\varphi_2^2)\bar{\mathbf w}_2)x_2+(a_3\varphi_3^2\bar{\mathbf w}_1 \\[7pt] 
& \hspace{1.8cm} +(\bar a_3\varphi_3^1+v_3\varphi_3^2)\bar{\mathbf w}_2)x_3 \big). 
\nonumber 
\end{split} 
\end{equation} 
Then from formula (\ref{Phiw0}) we see
\begin{equation} 
\begin{split} 
\Phi_{{\mathbf w}}(x) &= \big((\operatorname{Im}(\varphi_1^1\bar{\mathbf w}_1)+v_1\operatorname{Im}(\varphi_1^2\bar{\mathbf w}_2))x_1+(\operatorname{Im}(\varphi_2^1\bar{\mathbf w}_1)+v_1\operatorname{Im}(\varphi_2^2\bar{\mathbf w}_2))x_2 \\[7pt] 
& \hspace{0.8cm} + (\operatorname{Im}(\varphi_3^1\bar{\mathbf w}_1)+v_1\operatorname{Im}(\varphi_3^2\bar{\mathbf w}_2))x_3, ( \operatorname{Im}((\varphi_1^1+a_2\varphi_1^2)\bar{\mathbf w}_1) \\[7pt] 
& \hspace{0.8cm} + \operatorname{Im}((\bar a_2\varphi_1^1+v_2\varphi_1^2)\bar{\mathbf w}_2))x_1+ (\operatorname{Im}((\varphi_2^1+a_2\varphi_2^2)\bar{\mathbf w}_1) \\[7pt] 
& \hspace{0.8cm} +\operatorname{Im}((\bar a_2\varphi_2^1+v_2\varphi_2^2)\bar{\mathbf w}_2))x_2 +(\operatorname{Im}((\varphi_3^1+a_2\varphi_3^2)\bar{\mathbf w}_1) \\[7pt] 
& \hspace{0.8cm} + \operatorname{Im}((\bar a_2\varphi_3^1+v_2\varphi_3^2)\bar{\mathbf w}_2))x_3, (\operatorname{Im}(a_3\varphi_1^2\bar{\mathbf w}_1)+\operatorname{Im}((\bar a_3\varphi_1^1+v_3\varphi_1^2)\bar{\mathbf w}_2))x_1 \\[7pt] 
& \hspace{0.8cm} + (\operatorname{Im}(a_3\varphi_2^2\bar{\mathbf w}_1)+\operatorname{Im}((\bar a_3\varphi_2^1+v_3\varphi_2^2)\bar{\mathbf w}_2))x_2+(\operatorname{Im}(a_3\varphi_3^2\bar{\mathbf w}_1) \\[7pt] 
& \hspace{1.8cm} + \operatorname{Im}((\bar a_3\varphi_3^1+v_3\varphi_3^2)\bar{\mathbf w}_2))x_3 \big). 
\nonumber 
\end{split} 
\end{equation}

Using (\ref{lieomegathree}), we then see that the condition that $\Phi_{{\mathbf w}}$ lies in ${\mathfrak g}(\Omega_3)$ for every ${\mathbf w}\in\mathbb C^2$ leads to the relations
\begin{align} 
\varphi_1^1&=\varphi_2^1+a_2\varphi_2^2=a_3\varphi_3^2 \nonumber \\[3pt] 
v_1\varphi_1^2&=\bar a_2\varphi_2^1+v_2\varphi_2^2=\bar a_3\varphi_3^1+v_3\varphi_3^2 \nonumber \\[3pt] 
\varphi_2^1&=\varphi_1^1+a_2\varphi_1^2 \nonumber \\[3pt] 
v_1\varphi_2^2&=\bar a_2\varphi_1^1+v_2\varphi_1^2 \nonumber \\[3pt] 
\varphi_3^1&=a_3\varphi_1^2 \nonumber \\[3pt] 
\bar a_3\varphi_1^1+v_3\varphi_1^2&=v_1\varphi_3^2 \nonumber \\[3pt] 
a_3\varphi_2^2&=-\varphi_3^1-a_2\varphi_3^2 \nonumber \\[3pt] 
\bar a_3\varphi_2^1+v_3\varphi_2^2&=-\bar a_2\varphi_3^1-v_2\varphi_3^2. \nonumber 
\end{align} 

It easily follows that if $a_3=0$, then $\Phi=0$, so by formula (\ref{cond1}) we have ${\mathfrak g}_{1/2}=0$. If $a_3\ne 0$, then, by scaling $w_2$, we can assume that $a_3=1$. Similarly to case $(i)$, we consider the above ten equations in matrix form and row reduce. We have 
\[ 
\left[ \begin{array}{cccccc} 
1&-1&0&0&-a_2&0 \\ 
1&0&0&0&0&-1 \\ 
0&-\bar{a}_2&0&v_1&-v_2&0 \\ 
0&0&-1&v_1&0&-v_3 \\ 
-1&1&0&-a_2&0&0 \\ 
-\bar{a}_2&0&0&-v_2&v_1&0 \\ 
0&0&1&-1&0&0 \\ 
1&0&0&v_3&0&-v_1 \\ 
0&0&1&0&1&a_2 \\ 
0&1&\bar{a}_2&0&v_3&v_2 
\end{array} \right] 
\left[ \begin{array}{c} \varphi_1^1 \\ \varphi_2^1 \\ \varphi_3^1 \\ \varphi_1^2 \\ \varphi_2^2 \\ \varphi_3^2 \end{array} \right] = 
\left[ \begin{array}{c} 0 \\ 0 \\ 0 \\ 0 \\ 0 \\ 0 \\ 0 \\ 0 \\ 0 \\ 0 \end{array} \right].
\]

After easily securing pivots in the first four columns, we can focus on the resulting $6 \times 2$ matrix. We begin the row reduction of this matrix by assuming $a_2 \ne 0.$ Then by varying the values of $v_1, v_2$ and $v_3$ we see that this matrix is always full rank. Therefore, assume $a_2 = 0.$ By again varying the values of $v_1, v_2$ and $v_3$ we find the only situation in which the matrix is not full rank is when $a_2=0, v_1=1, v_2=-1,$ and  $v_3=0$. Therefore, in situations other than this we have $\varphi_i^j = 0$ for all $i, j$. Then $\Phi = 0$, and by formula (\ref{cond1}) we have $\mathfrak{g}_{1/2}=0$. Finally, notice that for the above values of $v_1$, $v_2$, $v_3$, $a_2$ the form ${\mathcal H}$ coincides with the right-hand side of (\ref{verynewformcalh}), for which we have already shown that ${\mathfrak g}_{1/2}=0$. 
\end{proof}

Now, Lemma \ref{finallemma} together with (\ref{estim 8}) and the second inequality in (\ref{estimm}) yields
$d(D_6)\le  15<17=n^2-8$. Thus, we have shown that Case (3) makes no contributions to the classification. 
\vspace{2mm} 

\textbf{Case 4.} 
Suppose that $k = 3, n = 6$. Here, $S(\Omega, H)$ is linearly equivalent either to 
$$D_7 := \left\{ (z, w) \in \mathbb C^3 \times \mathbb C^3: \text{Im } z - \mathcal H(w,w) \in \Omega_2 \right\}, $$ 
where $\mathcal H $ is an $\Omega_2$-Hermitian form, or to 
$$D_8 := \left\{ (z, w) \in \mathbb C^3 \times \mathbb C^3: \text{Im } z - \mathcal H(w,w) \in \Omega_3 \right\}, $$ 
where $\mathcal H$ is an $\Omega_3$-Hermitian form. 

Assume $S(\Omega, H)$ is equivalent to $D_7$. Then $S(\Omega, H)$ must be biholomorphic to a product of three unit balls. The only possibilities are $B^1 \times B^1 \times B^4$, $B^1 \times B^2 \times B^3$ or $B^2 \times B^2 \times B^2$, none of which have automorphism group of dimension $n^2 - 8 = 28$. 

So $S(\Omega, H)$ must be equivalent to $D_8$. By (\ref{estim2}) we have $s + \dim \mathfrak{g}(\Omega) \geq 10$. Since $\dim \mathfrak{g}(\Omega_3) = 4$, we see that $s \ge 6$. On the other hand, by (\ref{ests}) we have $s \leq 9$. Now recall from the corresponding case in the previous section that we must have $s = 1,2,3,5$ or $9$. Therefore we see that $s=9$ and the argument proceeds in the same way, showing that $S(\Omega, H)$ cannot be equivalent to $D_8$ and no contribution to the classification is made. 
\vspace{2mm} 

\textbf{Case 5.} 
Suppose that $k=3, n=7$. Here, $S(\Omega, H)$ is linearly equivalent either to 
$$D_9 := \left\{ (z, w) \in \mathbb C^3 \times \mathbb C^4: \text{Im } z - \mathcal H(w,w) \in \Omega_2 \right\}, $$ 
where $\mathcal H $ is an $\Omega_2$-Hermitian form, or to 
$$D_{10} := \left\{ (z, w) \in \mathbb C^3 \times \mathbb C^4: \text{Im } z - \mathcal H(w,w) \in \Omega_3 \right\}, $$ 
where $\mathcal H$ is an $\Omega_3$-Hermitian form. 
Inequality (\ref{estim2}) implies $s+ \dim \mathfrak g(\Omega) \ge 19$, and so we must consider the possibility that $s=16$ and $\Omega$ is linearly equivalent to $\Omega_2$. Then $S(\Omega, H)$ is equivalent to the domain $D_9$, which yields the product $B^1 \times B^1 \times B^5$, since $d(B^1 \times B^1 \times B^5) = 3+3+35 = 41 = n^2-8.$ It follows from the analysis of the $k=3,n=7$ case in the previous section that if $\Omega$ is linearly equivalent to $\Omega_3$, then Case $5$ provides no further contributions to the classification. 
\vspace{2mm} 

\textbf{Case 6.} 
Suppose that $k=4, n=4$. In this case, after a linear change of variables, $S(\Omega, H)$ is one of the domains 
$$\left\{ z \in \mathbb C^4 : \text{Im } z \in \Omega_4 \right\},$$ 
$$\left\{ z \in \mathbb C^4 : \text{Im } z \in \Omega_5 \right\},$$ 
$$\left\{ z \in \mathbb C^4 : \text{Im } z \in \Omega_6 \right\},$$ 
and therefore is biholomorphic either to $B^1 \times B^1 \times B^1 \times B^1$, or to $B^1 \times T_3$, or to $T_4$, as in the case of automorphism group dimension $d(M) = n^2-7.$ The dimensions of the respective automorphism groups of these domains are 12, 13 and 15. Each of these numbers is greater than $8 = n^2 - 8$, and so we see that Case 6 contributes nothing to our classification. 
\vspace{2mm} 

\textbf{Case 7.} 
Suppose that $k=4, n=5$. Then $S(\Omega, H)$ is linearly equivalent to either 
$$D_{11} := \left\{ (z, w) \in \mathbb C^4 \times \mathbb C: \text{Im } z - v|w|^2 \in \Omega_4 \right\}, $$ 
where $v = (v_1, v_2, v_3, v_4)$ is a vector in $\mathbb R^4$ with non-negative entries, or 
$$D_{12} := \left\{ (z, w) \in \mathbb C^4 \times \mathbb C: \text{Im } z - v|w|^2 \in \Omega_5 \right\}, $$ 
where $v = (v_1, v_2, v_3, v_4)$ is a vector in $\mathbb R^4$ satisfying $v \in \bar{\Omega}_5 \setminus \left\{ 0 \right\}$, or 
$$D_{13} := \left\{ (z, w) \in \mathbb C^4 \times \mathbb C: \text{Im } z - v|w|^2 \in \Omega_6 \right\}, $$ 
where $v = (v_1, v_2, v_3, v_4)$ is a vector in $\mathbb R^4$ satisfying $v \in \bar{\Omega}_6 \setminus \left\{ 0 \right\}$, i.e., $v_1^2 \geq v_2^2 + v_3^2 + v_4^2, v_1 > 0.$ 

Since $s=1$, by inequality (\ref{estim2}) we see that $\dim \mathfrak{g}(\Omega) \geq 4$. Therefore, in contrast to the corresponding case in the previous section, we must also consider the domain $D_{11}$. Let us begin with this possibility, and assume $S(\Omega, H)$ is equivalent to $D_{11}$. Then $S(\Omega, H)$ is biholomorphic to the product of unit balls given by $B^1 \times B^1 \times B^1 \times B^2$, since $d(B^1 \times B^1 \times B^1 \times B^2) = 3+3+3+8 = 17 = n^2-8$.

Next, assume that $S(\Omega, H)$ is equivalent to $D_{12}$. As in the analysis of the same case in the previous section, by Theorem (\ref{descrg1/2}) we see that in the cases of the boundary components $C_1, C_2$ and $C_3$, for $\mathfrak g = \mathfrak g(D_{12})$ we have $\mathfrak g_{1/2}=0$. Then by estimate (\ref{estim 8}), the second inequality in (\ref{estimm}) and Lemmas \ref{componentone}, \ref{componenttwo} and \ref{componentthree} we see that in each of these cases 
\[ d(D_{12}) \le 16 < 18 = n^2-7 \] 
(recall that $s=1$). This shows that in the cases of these components, $S(\Omega, H)$ cannot be equivalent to $D_{12}$, so no new contributions are made to our classification. 

Lastly, assume that $S(\Omega, H)$ is equivalent to $D_{13}$. Recall from the analysis of the same case in the previous section that for $\mathfrak g = \mathfrak g(D_{13})$ we have $\dim \mathfrak g_0 = 6$. Then using the second inequality in (\ref{estimm}) and \cite[Lemma 4.3]{Isa3} for $N = 1$, we see that 
$$d(D_{13}) = \dim \mathfrak{g}_{-1} + \dim \mathfrak{g}_{-1/2} + \dim \mathfrak{g}_{0} + \dim \mathfrak{g}_{1/2} + \dim \mathfrak{g}_{1} \leq 16 < 17 = n^2 - 8,$$ 
showing no contribution to the classification. Therefore, the product $B^1 \times B^1 \times B^1 \times B^2$ is the only contribution made to the classification by Case 7. 
\vspace{2mm} 

\textbf{Case 8.} 
Suppose that $k=5$ and $n=5$. Then by inequality (\ref{estim2}) we see that in this situation we have $\dim \mathfrak g(\Omega) \geq 7$. Therefore, after a linear change of variables $S(\Omega, H)$ turns into one of the domains 
\[ \left\{z \in \mathbb C^5: \operatorname{Im }z \in \Omega_9 \right\}, \]
\[ \left\{z \in \mathbb C^5: \operatorname{Im }z \in \Omega_{10} \right\}, \] 
and therefore is biholomorphic either to $B^1 \times T_4$, or to $T_5$, as in the case of automorphism group dimension $d(M) = n^2 - 7.$ The dimensions of the respective automorphism groups of these domains are $18$ and $21$. Each of these numbers is greater than $17 = n^2 - 8$, and so we see that Case $8$ makes no contribution to the classification. 
\vspace{2mm} 

\textbf{Case 9.} 
Suppose that $k=6, n=6$. Consider the following result from \cite[Lemma 2.1]{Isa3}, which we state without proof. 
\begin{lemma} 
If for $k\ge 3$ we set
\[ K:=\frac{(k-2)(k-3)}{2}+k+1, \]
then the inequality $\dim {\mathfrak g}(\Omega)\ge K$ implies that $\Omega$ is linearly equivalent to $\Lambda_k$. 
\end{lemma}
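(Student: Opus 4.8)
The plan is to deduce everything from the size of the isotropy subgroup, reusing the estimate already employed in the proof of Lemma \ref{gest}. Fix a point $x\in\Omega$. Since $\Omega$ is homogeneous, its connected automorphism group $G(\Omega)^\circ$ acts transitively on $\Omega$, so the orbit map $g\mapsto gx$ gives $\dim{\mathfrak g}(\Omega)=k+\dim G(\Omega)_x$. The hypothesis $\dim{\mathfrak g}(\Omega)\ge K$ therefore yields
\[
\dim G(\Omega)_x\ge\frac{(k-2)(k-3)}{2}+1=\binom{k-2}{2}+1.
\]
As in Lemma \ref{gest}, the isotropy subgroup is compact, since it preserves the bounded set $\Omega\cap(x-\Omega)$, so after a linear change of coordinates it lies in $\operatorname{O}_k(\mathbb R)$ and fixes $x$. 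Writing $L:=(G(\Omega)_x)^\circ$ for its identity component, $L$ acts orthogonally on $x^\perp\cong\mathbb R^{k-1}$, giving $L\subseteq\operatorname{SO}_{k-1}(\mathbb R)$ with $\dim L>\binom{k-2}{2}$.

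First I would show that $L$ acts transitively on the unit sphere $S^{k-2}\subset x^\perp$. If the action of $L$ on $x^\perp$ were reducible, then $L$ would preserve an orthogonal splitting $x^\perp=V_1\oplus V_2$ with $\dim V_i=p_i\ge 1$ and $p_1+p_2=k-1$, whence $\dim L\le\binom{p_1}{2}+\binom{p_2}{2}\le\binom{k-2}{2}$, contradicting the bound above; so the action is irreducible. The hard part is the passage from irreducibility plus the dimension bound to transitivity on the sphere: here I would invoke the classification of compact connected groups acting transitively on spheres (due to Montgomery--Samelson and Borel), together with the observation that every \emph{irreducible} proper connected subgroup of $\operatorname{SO}_{m}(\mathbb R)$ (with $m=k-1$) that fails to act transitively on $S^{m-1}$ has dimension at most $\binom{m-1}{2}$. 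The low-dimensional exceptions, such as $\operatorname{U}_2\subset\operatorname{SO}_4$ occurring at $k=5$, are harmless precisely because those exceptional large subgroups \emph{do} act transitively on the relevant sphere. This step is where essentially all of the work lies, and it is the main obstacle.

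Granting transitivity of $L$ on $S^{k-2}$, the conclusion follows by a symmetry argument. Since $L$ sends every unit vector of $x^\perp$ to every other one (and $k\ge 3$, so $S^{k-2}$ is not a point), the only $L$-fixed vectors in $\mathbb R^k=\mathbb R x\oplus x^\perp$ are the multiples of $x$. The dual cone $\Omega^*$ is again a proper open convex cone on which $L$ acts orthogonally; averaging an interior point of $\Omega^*$ over the compact group $L$ produces an $L$-fixed interior point, which must then be a positive multiple of $x$. In particular $x\in\operatorname{int}\Omega^*$, so $\langle x,y\rangle\ge\varepsilon\|y\|$ on $\bar\Omega$ for some $\varepsilon>0$, and the slice $\Sigma:=\{y\in\bar\Omega:\langle x,y\rangle=1\}$ is compact and convex. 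As $\Sigma$ is $L$-invariant and $L$ is transitive on directions in $x^\perp$, the set $\Sigma$ is a round ball centred on the $x$-axis. Hence $\Omega$ is the cone over a round ball, i.e. linearly equivalent to the Lorentz cone $\Lambda_k$, as claimed.

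As a consistency check, this analysis also excludes reducible cones automatically: a nontrivial product $\Omega_1\times\Omega_2$ has isotropy acting reducibly, so by the first step $\dim L\le\binom{k-2}{2}$ and therefore $\dim{\mathfrak g}(\Omega)=k+\dim L\le\binom{k-2}{2}+k=K-1<K$, consistent with the claim that only $\Lambda_k$ can meet the bound.
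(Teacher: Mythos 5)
First, a point of comparison that matters: the paper does not prove this lemma at all --- it is quoted from \cite[Lemma 2.1]{Isa3} and explicitly ``stated without proof,'' so the only meaningful comparison is with the argument in \cite{Isa3}, whose overall shape your proposal reproduces: bound the isotropy dimension as in Lemma \ref{gest}, obtain a large connected compact group $L$ acting on $x^\perp$, deduce transitivity on the sphere of directions, and identify $\Omega$ as the cone over a round ball. The convex-geometric half of your argument is complete and correct: the $L$-invariance of the open dual cone, the Haar-averaging trick producing an $L$-fixed interior point that must be a positive multiple of $x$, the compactness of the slice $\Sigma$, and the constancy of the radial function forcing $\Sigma$ to be a round ball are all sound. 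Note also that you only ever need the inequality $\dim G(\Omega)_x \ge \dim\mathfrak g(\Omega)-k$ (orbits have dimension at most $k$), so your appeal to homogeneity for the equality is harmless but unnecessary; the argument works for any proper open convex cone.

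The weak point is exactly where you locate it, and as written it is a genuine gap: the Montgomery--Samelson--Borel classification of compact connected groups acting transitively on spheres does not imply your ``observation'' that an irreducible proper connected subgroup of $\operatorname{SO}_m(\mathbb R)$ which fails to act transitively on $S^{m-1}$ has dimension at most $\binom{m-1}{2}$. That classification describes the transitive groups; it gives no dimension bound whatsoever on the non-transitive ones, which is the statement actually doing the work. What you need is a different (also classical, also nontrivial) fact, essentially a gap theorem for subgroup dimensions: every proper closed connected subgroup of $\operatorname{SO}_m(\mathbb R)$ has dimension at most $\binom{m-1}{2}$, except for $m=4$, where the exceptions are the conjugates of $\operatorname{U}_2$, and these do act transitively on $S^3$. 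This follows from Dynkin's classification of maximal connected subgroups of $\operatorname{SO}_m$ (the reducible maximal subgroups are covered by your splitting estimate, $\operatorname{U}_{m/2}$ satisfies $m^2/4\le\binom{m-1}{2}$ once $m\ge 6$, and the remaining irreducible maximal subgroups in the list have far smaller dimension), and it can be cited from the transformation-groups literature; but it cannot be attributed to the transitive-sphere classification alone, so as the proposal stands the crux is unsupported. Two remarks make the repair easy: (i) once you invoke the gap theorem, transitivity of $L$ is immediate and your irreducibility step becomes redundant; (ii) for the only value the paper ever uses, $k=6$, everything can be done by hand --- there $L\subseteq\operatorname{SO}_5$ with $\dim L\ge 7$, and since $\mathfrak{so}_5\cong\mathfrak{sp}_2$ has no proper subalgebra of dimension greater than $6$, one gets $L=\operatorname{SO}_5$ directly.
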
 
In this case, inequality (\ref{estim2}) implies that $\dim \mathfrak g \ge 16 > 13 = K,$ and so by the above lemma $\Omega$ is linearly equivalent to the Lorentz cone $\Lambda_6.$ Therefore, after a linear change of variables, $S(\Omega, H)$ turns into the domain 
\[ \left\{ z \in \mathbb C^6 : \operatorname{Im} z \in \Lambda_6 \right\}, \] which is the tube domain $T_6$, where 
\begin{align}
T_6 = \big\{(z_1, z_2, z_3, z_4, z_5, z_6) & \in \mathbb C^6 : (\operatorname{Im } z_1)^2 -(\operatorname{Im } z_2)^2 - (\operatorname{Im } z_3)^2 \nonumber \\
&- (\operatorname{Im } z_4)^2 - (\operatorname{Im } z_5)^2 - (\operatorname{Im } z_6)^2 >0, \operatorname{Im }z_1 >0 \big\}. \nonumber 
\end{align} 
Note that $d(T_6) = 28 = n^2-8,$ so Case 9 contributes $T_6$ to the classification of homogeneous Kobayashi-hyperbolic manifolds with automorphism group dimension $n^2-8.$ This completes the proof. 

\bibliographystyle{plain}
\bibliography{ContinuingClassification}

\end{document}